\newcommand{\MGC}{\mathcal{G}_0^{\mathbb{C}}}
\newcommand{\CP}{\mathbb{C}P}
\newcommand{\lan}{\langle }
\newcommand{\ran}{\rangle}
\newcommand{\ML}{\mathcal{L}}
\newcommand{\da}{\dagger}
\newcommand{\MM}{\mathcal{M}}
\newcommand{\Vol}{\mathrm{Vol}}
\newtheorem{theorem}{Theorem}[section]
\newtheorem{question}[theorem]{Question}
\newtheorem{assumption}[theorem]{Assumption}
\newtheorem{corollary}[theorem]{Corollary}
\newtheorem{definition}[theorem]{Definition}
\newtheorem{example}[theorem]{Example}
\newtheorem{lemma}[theorem]{Lemma}
\newtheorem{proposition}[theorem]{Proposition}
\newtheorem*{remark}{Remark}
\renewcommand{\Im}{\mathrm{Im}}
\newcommand{\Tr}{\mathrm{Tr}}
\newcommand{\st}{\star}
\newcommand{\we}{\wedge}
\newcommand{\ti}{\times}
\newcommand{\al}{\alpha}
\newcommand{\na}{\nabla}
\newcommand{\ep}{\epsilon}
\newcommand{\be}{\beta}
\newcommand{\mfg}{\mathfrak{g}}
\newcommand{\lam}{\lambda}
\newcommand{\mft}{\mathfrak{t}}
\newcommand{\diag}{\mathrm{diag}}
\newcommand{\MG}{\mathcal{G}}
\newcommand{\slf}{\mathfrak{sl}}
\newcommand{\Lam}{\Lambda}
\newcommand{\mg}{\mathfrak{g}}
\newcommand{\MMOd}{\MO^{\mathrm{de}}}
\newcommand{\Id}{\mathrm{Id}}
\newcommand{\mfgc}{\mathfrak{g}^{\mathbb{C}}}
\newcommand{\GC}{G^{\mathbb{C}}}
\newcommand{\hF}{\hat{F}}
\newcommand{\BR}{\mathbb{R}}
\newcommand{\BC}{\mathbb{C}}
\newcommand{\BH}{\mathbb{H}}
\newcommand{\BO}{\mathbb{O}}
\newcommand{\MN}{\mathcal{N}}
\newcommand{\MH}{\MM_{\BH}}
\newcommand{\MO}{\MM_{\BO}}
\newcommand{\NH}{\MN_{\BH}}
\newcommand{\NO}{\MN_{\BO}}
\newcommand{\mbX}{\mathbf{X}}
\newcommand{\UH}{\mathcal{H}}
\newcommand{\MMdp}{\mathcal{M}^{\mathrm{de}}_{\mathrm{pole}}}
\newcommand{\MMnc}{\mathcal{M}_{\mathrm{NC}}}
\newcommand{\MGCC}{\MG^{\mathbb{C}}}
\begin{document}
	
	\title[On the Moduli Space of the Octonionic Nahm's Equations]{On the Moduli Space of the Octonionic Nahm's Equations}
	\author{Siqi He} 
	
	\address{Simons Center for Geometry and Physics, Stony Brook University\\Stony Brook, NY, 11790}
	\email{she@scgp.stonybrook.edu}
	\maketitle
	\begin{abstract}
		In this paper, we study some basic properties of the octonionic Nahm's equations over $[0,1]$. We prove that the moduli space of the smooth solutions to the octonionic Nahm's equations over $[0,1]$ is a star-shaped smooth manifold with a complete metric. In addition, for any commuting triples of the cotangent bundle of a complex Lie group, we construct solutions to the octonionic Nahm's equations. Moreover, we introduce extra symmetry and study a decoupled version of the octonionic Nahm's equations over $[0,1]$. We prove a Kempf-Ness theorem for the meromorphic solutions to the decoupled octonionic Nahm's equations.
	\end{abstract}
	\begin{section}{Introduction}
		The classical Nahm's equations, introduced by Nahm \cite{nahm1980simple}, have been deeply studied from a number of different points of view. Among their many applications we can mention, Hitchin's construction \cite{hitchin1983construction} of monopoles, spectral curve and the Nahm transform, Kronheimer \cite{kronheimer2004hyperkahler} and Dancer \cite{dancer1993nahm}'s study on the hyperkh\"aler metric, Donaldson's \cite{donaldson1984nahmclassification} description of the moduli space of monopoles in terms of rational maps.
		
		This paper is an attempt to study an octonionic generalization of the Nahm's equations and to generalize the previous results to the new equations. Let $G$ be a compact Lie group with Lie algebra $\mg$, let $\mbX=(X_1,X_2,\cdots,X_7)$ be seven $\mathfrak{g}$-valued functions of real variable $t$ and let $\na_t$ be a covariant derivative with variable $t$, then the octonionic Nahm's equations is an ODE system which can be written as 
		$$\na_t\mbX+\mbX\times \mbX=0,$$
		where $\times$ is the cross product of vectors in $\mathbb{R}^7.$ This equation have been introduced in \cite{grabowski1993octonion} as a dimensional reduction of the $Spin(7)$ instanton equations. 
		
		Our main motivation to study the octonionic Nahm's equations is to understand the program proposed by Cherkis \cite{Cherkis2015}, which predicts an potential octonionic version of the Nahm transform. The classical Nahm transfrom theory, we refer to \cite{jardim2004surveynahm} for an overview, is closely related to the quaternions and the $4^{\mathrm{th}}$ Clifford algebra. The main difficulties come from the non associativity of the octonion algebra, which fails the Dirac operator trick that been used by Nakajima \cite{nakajima1993monopoles} and there is no twistor description. 
		
		In this paper, we study smooth solutions to the octonionic Nahm's equations analogy to the work of Dancer and Swann \cite{dancer1996hyperkahler} and prove the following:
		\begin{theorem}
			The smooth solutions to the octonionic Nahm's equations over $[0,1]$ with the $L^2$ metric is a smooth complete Riemannian manifold. In addition, it is diffeomorphic to an star shaped open set in $G\times (\mathfrak{g}\otimes \mathbb{R}^7)$. 
		\end{theorem}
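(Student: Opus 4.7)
The plan is to adapt the Dancer-Swann analysis of Nahm's equations to the octonionic setting. I would (i) pass to the temporal gauge $A_0 = 0$ by the gauge transformation $g(t) = P\exp(-\int_0^t A_0\,ds)$ with $g(0) = e$, reducing the equation to the autonomous ODE $\dot X_i = -(\mbX\times\mbX)_i$ on $\mg\otimes\BR^7$; (ii) apply Picard-Lindel\"of to obtain a unique maximal solution from any initial datum $\xi\in\mg\otimes\BR^7$ and define $U$ to be the open set of those $\xi$ for which the solution extends to all of $[0,1]$. Openness of $U$ follows from continuous dependence on initial conditions. Star-shapedness about $0$ is immediate from the observation that the rescaling $\mbX(t)\mapsto s\mbX(st)$ sends a solution defined on $[0,1]$ to one with initial datum $s\xi$, defined on the enlarged interval $[0,1/s]$ when $s\in(0,1]$.

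The gauge-theoretic step identifies the moduli space as a trivial $G$-bundle over $U$. I would work with the gauge groups $\MG_0 = \{g\in C^\infty([0,1],G): g(0)=g(1)=e\}$ and $\wt{\MG} = \{g: g(0)=e\}$, noting $\wt{\MG}/\MG_0\cong G$ via evaluation at $t=1$. The $\wt{\MG}$-action is free on the solution space, with the temporal-gauge locus as a global section (the only $\wt\MG$-element preserving temporal gauge and fixing $t=0$ is the identity). Consequently the quotient by $\MG_0$ is a principal $G$-bundle over $U$, and the section $\xi\mapsto [\mbX_\xi]$ from temporal-gauge solutions trivializes it as $G\times U$; the explicit diffeomorphism sends $(\xi,g)$ to the class of $h^{-1}\cdot(0,\mbX_\xi)$ for any $h\in\wt\MG$ with $h(1)=g$.

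For the Riemannian structure, a Coulomb slice for the infinitesimal $\MG_0$-action is a first-order ODE on $[0,1]$ and therefore trivially Fredholm with no cokernel (the endpoint conditions $g(0)=g(1)=e$ translate into a well-posed boundary value problem); this presents the tangent space at each point and gives $\MM$ a smooth Riemannian structure from the $L^2$ inner product on $(\delta A_0,\delta\mbX)$. The hard part is completeness. The plan is to show that the $L^2$ arclength from a fixed basepoint to $\xi\in U$ diverges as $\xi\to\pa U$: if $\xi_n\to\xi_\infty\in\pa U$, then the solutions $\mbX_{\xi_n}$ must develop arbitrarily large $L^\infty$ norm on $[0,1]$ (otherwise a Gronwall bound would extend $\mbX_{\xi_\infty}$ past $t=1$, contradicting $\xi_\infty\in\pa U$), and this $L^\infty$ blow-up must force infinite $L^2$ distance in $\MM$ via a lower bound on the norm of tangent vectors along paths approaching $\pa U$.

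The principal obstacle is exactly this last point: quantitatively linking the ODE blow-up at $\pa U$ to divergence of the $L^2$ metric on $\MM\cong G\times U$. In the classical (quaternionic) case the conservation laws and hyperk\"ahler structure streamline this estimate, but the $\BR^7$ cross product is not associative and supplies fewer conserved quantities, so the required estimate must be obtained directly from the ODE through a careful comparison of the $L^2$ distance on the slice with the norm of the solution itself.
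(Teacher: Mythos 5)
Your steps (i)--(iii) — temporal gauge, Picard--Lindel\"of, openness of the set $U$ of good initial data, the rescaling $\mbX(t)\mapsto s\mbX(st)$ for star-shapedness, Kronheimer's Coulomb slice for smoothness, and the identification $\MO\cong G\times U$ via holonomy at $t=1$ plus initial data at $t=0$ — are exactly the paper's argument (the paper packages the last item as the map $\chi[(X_0,X)]=(g(1),X_1(0),\dots,X_7(0))$ and checks injectivity by ODE uniqueness rather than via a principal-bundle trivialization, but this is the same computation). The one genuine gap is completeness, which you explicitly leave open: you reduce it to the claim that the $L^2$ arclength diverges as $\xi\to\pa U$ and then state that ``the required estimate must be obtained directly from the ODE,'' without obtaining it. As written this is a strategy, not a proof. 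For what it is worth, your route can be closed: the function $X\mapsto\|(X_1,\dots,X_7)\|_{L^2([0,1])}$ is $\MG_0$-invariant and $1$-Lipschitz for the $L^2$ metric on paths, hence descends to a $1$-Lipschitz function on $\MO$; and a solution of $\dot X=-X\times X$ blowing up at time $T\le 1$ satisfies $|X(t)|\ge c/(T-t)$ (otherwise $|\dot X|\le c'|X|^2$ would allow continuation past $T$), which forces $\|X_{\xi_n}\|_{L^2}\to\infty$ as $\xi_n\to\pa U$ and hence infinite distance. You would still need to combine this with convergence of Cauchy sequences that stay in compact subsets of $G\times U$, which is routine.

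The paper avoids this boundary analysis entirely: it takes a sequence of solutions with bounded $L^2$ norm, gauges to $X_0=0$, argues that the equation then controls all $C^k$ norms, and extracts a limit by Arzela--Ascoli which is again a smooth solution on $[0,1]$. That argument is shorter but itself quite terse (an $L^2$ bound on $[0,1]$ plus $|\dot X|\lesssim|X|^2$ does not instantly yield a $C^0$ bound; one must use that each member of the sequence already exists on all of $[0,1]$). So the two approaches are genuinely different: yours localizes the failure of completeness at $\pa U$ and must quantify the blow-up rate there, while the paper's works with precompactness of bounded sets in the moduli space. Either way, the decisive estimate tying the ODE blow-up to the $L^2$ geometry is the part you have not supplied, and it is the only part of your proposal that does not stand on its own.
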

		
		One of the most interesting figure in classical Nahm's equations theory is that the classical Nahm's equations can be regarded as a hyperk\"ahler moment map and $L^2$ metric becomes a hyperk\"ahler metric in the moduli space. The octonionic Nahm's equations can also be regarded as the moment map of seven complex structures which form the $7^{\mathrm{th}}$ Clifford algebra. However, neither of these complex structures can be reduced to the moduli space. We don't know whether there exists an complex structure compatible with the $L^2$ metric over the octonionic Nahm's moduli space. We also note that similar moment map description is obtained in \cite{nagy2019complex}.

		As the moduli space of the octonionic Nahm's equations might not have enough interesting structures comparing to the classical theory, we introduce extra symmetry. We write $\frac{d}{dt}+\al=\na_t+iX_1,\;\be_1=X_2+iX_3,\;\be_2=X_4+iX_5,\;\be_3=X_6+iX_7$ and assume that $[\be_i,\be_j]=0$, then the octonionic Nahm's equations can be written as 
		\begin{equation*}
		\begin{split}
		&\frac{d\be_i}{dt}+2[\al,\be_i]=0,\;[\be_i,\be_j]=0,\;for\;any\;i,j=1,2,3,\\
		&\frac{d}{dt}(\al+\al^{\st})+2([\al,\al^{\st}]+[\be_1,\be_1^{\st}]+[\be_2,\be_2^{\st}]+[\be_3,\be_3^{\st}])=0,
		\end{split}
		\end{equation*}
		which we call the decoupled octonionic Nahm's equations. 
		
		The motivation to introduce the extra symmetry comes from the relationship between the Spin(7) instanton over Calabi-Yau 4-fold and the Hermitian-Yang-Mills equations. The Hermitian-Yang-Mills equations are a special case of the Spin(7) instanton equations which have a better understood moduli space. The work of Donaldson-Uhlenbeck-Yau \cite{donaldson1985anti,uhlenbeck1986existence} fully understood the moduli space space of Hermitian-Yang-Mills in terms of algebraic data. However, the moduli space of Spin(7) instanton is still mysterious. As the decoupled octonionic Nahm's equations will be a dimension reduction of the Hermitian-Yang-Mills equations over 1-dimensional, we expect a better understanding comparing to the full octonionic Nahm's equations.
		
		Let $G$ be a compact Lie group with complexification $G^{\mathbb{C}}$, we identified the cotangent bundle $T^{\st}G^{\mathbb{C}}=G^{\mathbb{C}}\times \mathfrak{g}^{\mathbb{C}}$, then the commuting triple of $T^{\st}G^{\mathbb{C}}$ are defined as 
		$$N:=\{(g,\mft_1,\mft_2,\mft_3)\in G^{\mathbb{C}}\ti\mfg^{\mathbb{C}}\ti\mfg^{\mathbb{C}}\ti\mfg^{\mathbb{C}}|[\mft_i,\mft_j]=0\}.$$
		
		We prove a Kempf-Ness type theorem to the octonionic Nahm's equations, which generalize a theorem of Kronheimer \cite{kronheimer2004hyperkahler},
		\begin{theorem}
			There exists a one-to-one correspondence between the solutions to the decoupled octonionic Nahm's equations and the commuting triple $N$ of $T^{\st}G^{\mathbb{C}}$.
		\end{theorem}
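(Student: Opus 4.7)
The plan is to imitate Kronheimer's analysis of the classical Nahm equations, now with three commuting complex Higgs fields $\be_1,\be_2,\be_3$ in place of a single one. The decoupled system naturally splits into a \emph{complex moment-map equation} (the first line) and a \emph{real moment-map equation} (the second line). For the forward map, given a solution $(\al,\be_1,\be_2,\be_3)$, define the complex gauge transformation $g(t)\in G^{\mathbb{C}}$ by $g^{-1}\frac{dg}{dt}=2\al$ with $g(0)=\Id$, and set $\tilde\be_i(t):=g(t)\be_i(t)g(t)^{-1}$. A direct calculation gives $\frac{d}{dt}\tilde\be_i = g\bigl(\frac{d\be_i}{dt}+2[\al,\be_i]\bigr)g^{-1}=0$ by the complex equation, so $\tilde\be_i\equiv\mft_i$ is constant in $t$ and $[\mft_i,\mft_j]=g[\be_i,\be_j]g^{-1}=0$. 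The assignment $(\al,\be_1,\be_2,\be_3)\mapsto(g(1),\mft_1,\mft_2,\mft_3)\in N$ is then well-defined on solutions normalised so that $g(0)=\Id$.

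For the inverse direction, given $(g,\mft_1,\mft_2,\mft_3)\in N$ I first exhibit one complex solution realising this data: pick any smooth path $h:[0,1]\to G^{\mathbb{C}}$ with $h(0)=\Id$ and $h(1)=g$, and set $\al_0=\tfrac12 h^{-1}\frac{dh}{dt}$, $\be_i^{(0)}(t)=h(t)^{-1}\mft_i h(t)$; commutativity of the $\be_i^{(0)}$ follows from that of the $\mft_i$. The residual freedom is the action of complex gauge transformations $k\in C^\infty([0,1],G^{\mathbb{C}})$ with $k(0)=k(1)=\Id$, which preserves both the complex equations and the prescribed boundary data $(g,\mft_i)$. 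The proof reduces to showing that each such complex-gauge orbit contains a \emph{unique} representative satisfying the real moment-map equation.

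This existence/uniqueness step is a Kempf--Ness/Donaldson--Uhlenbeck--Yau-type theorem and is where the main analytic work lies. I would use the polar decomposition of the complex gauge group to reduce to Hermitian gauges $k=e^{i\xi}$ with $\xi$ self-adjoint and $\xi(0)=\xi(1)=0$, then introduce a Donaldson-type energy functional $\MF(\xi)$ on such $\xi$ whose Euler--Lagrange equation is exactly the real moment-map equation. Two properties will drive the argument: $\MF$ is convex along the linear paths $\xi_s=s\xi$, with Hessian a positive self-adjoint Dirichlet operator on $[0,1]$; and its coercivity is controlled by the $L^2$ moment-map energy identity. The three Higgs fields contribute only additional nonnegative terms $\sum_i[\be_i,\be_i^{\st}]$ to the real moment map, so they strengthen rather than weaken the a priori estimates, and the commutativity $[\be_i,\be_j]=0$ prevents new cross terms. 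The main technical obstacle is thus establishing long-time existence and convergence of the negative gradient flow of $\MF$, or equivalently solving its elliptic Euler--Lagrange equation directly; on the compact interval $[0,1]$ with Dirichlet boundary conditions there is no stability obstruction analogous to that of the DUY theorem, and I expect the maximum-principle and integration-by-parts estimates used by Kronheimer \cite{kronheimer2004hyperkahler} to carry through with only cosmetic changes. Bijectivity of the two maps is then a formal verification.
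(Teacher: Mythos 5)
Your proposal follows essentially the same route as the paper: the forward map is identical (integrate $\frac{dg}{dt}=2g\al$ with $g(0)=\Id$ and conjugate the $\be_i$ to constants), and the inverse is obtained by realising the boundary data $(g,\mft_1,\mft_2,\mft_3)$ with an arbitrary path and then producing the unique representative of the complex gauge orbit (with fixed endpoint data) that solves the real moment-map equation, via a Donaldson-type variational argument. The paper implements the existence step by minimising the Lagrangian $\int \frac12|h^{-1}\frac{dh}{dt}|^2_{\mathcal{H}}+\sum_{i}\Tr(\be_i h^{-1}\be_i^{\st}h)$ over paths in $\mathcal{H}=G^{\mathbb{C}}/G$ with prescribed endpoints, and the uniqueness step via the convexity inequality $\frac{d^2}{dt^2}\sigma(h)\geq -(|\hF|+|\hF'|)$ for $\sigma(h)=\Tr(h)+\Tr(h^{-1})-2$ together with the maximum principle --- exactly the Kronheimer--Donaldson estimates you anticipate carrying over unchanged.
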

		
		In \cite{hitchin1983construction}, Hitchin identified the moduli space of finite energy monopoles over $\mathbb{R}^3$ with solutions to the classical Nahm's equations over $(0,1)$ which is meromorphic at $0,1$ and satisfy extra symmetric condition. Donaldson \cite{donaldson1984nahmclassification} directly identified the meromorphic solutions to the classical Nahm's equations with based rational maps. 
		
		In this paper, we generalized the work of Donaldson \cite{donaldson1984nahmclassification} for $G^{\mathbb{C}}=GL(k,\mathbb{C})$. We introduce a meromorphic condition for the decoupled octonionic Nahm's equations and identified them with algebraic data. To be more explicitly, we prove
		
		\begin{theorem}
			There exists a one-to-one correspondence between
			\begin{itemize}
				\item [(a)] equivalent class of meromorphic solutions to the octonionic Nahm's equations satisfy Assumption \ref{assump},
				\item [(b)] equivalent class under $O(k,\mathbb{C})$ quadruple $(B_1,B_2,B_3,w)$ where 
				\begin{itemize}
					\item [(i)]$B_1,B_2,B_3$ are all $k\ti k$ symmetric matrix,
					\item [(ii)] a vector $w$ in $\mathbb{C}^k$ which generates $\mathbb{C}^k$ as a $\mathbb{C}[B_i]$ module for each $i=1,2,3$,
					\item [(iii)] $B_i,w$ generates the same filtration of $\mathbb{C}^k$. To be more precisely, $$\mathrm{span}\{w,B_iw,\cdots,B_i^lw\}=\mathrm{span}\{w,B_jw,\cdots,B_j^lw\}$$ for any $i,j=1,2,3$ and $l=1,\cdots,k-1$. 
				\end{itemize}   
			\end{itemize} 	
		\end{theorem}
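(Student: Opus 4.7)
\bigskip

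The plan is to reduce the problem to an algebraic one via the two-step structure already used in the Kempf-Ness theorem: the equations $\frac{d\beta_i}{dt}+2[\alpha,\beta_i]=0$ together with $[\beta_i,\beta_j]=0$ are the ``complex'' moment equations (invariant under the complexified gauge group $GL(k,\mathbb{C})$-valued gauge transformations on $[0,1]$), while the equation for $\alpha+\alpha^\st$ is the ``real'' moment condition that cuts out the unitary slice. By the Kempf-Ness theorem stated above, each complex gauge orbit meets the solution set in a single unitary orbit, so it suffices to classify meromorphic solutions of the complex equations up to $GL(k,\mathbb{C})$-valued gauge transformations, and then track what Assumption~\ref{assump} imposes at the two endpoints.

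First I would analyze the forward direction $(a)\Rightarrow(b)$. Using a complex gauge transformation one can, away from the poles, solve $\frac{d\beta_i}{dt}+2[\alpha,\beta_i]=0$ so that $\beta_i$ becomes a constant matrix $B_i$; the commutation relations $[\beta_i,\beta_j]=0$ then translate directly into $[B_i,B_j]=0$. Assumption~\ref{assump}, which (following Donaldson's framework for $GL(k,\BC)$) prescribes the meromorphic behaviour of $(\alpha,\beta_i)$ at $t=0$ in terms of a principal nilpotent residue, forces the normal form to have the shape of a companion-type system. The cyclic vector $w\in\BC^k$ is then read off from the leading coefficient of the residue, and the Donaldson-type analysis at the endpoint produces the cyclicity of $w$ as a $\BC[B_i]$-module for each $i$ individually. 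The symmetry $t\mapsto 1-t$ together with the matching condition at the other pole is what reduces the structure group from $GL(k,\BC)$ to $O(k,\BC)$ and forces each $B_i$ to be a symmetric matrix; the common filtration condition $\mathrm{span}\{w,B_iw,\dots,B_i^lw\}=\mathrm{span}\{w,B_jw,\dots,B_j^lw\}$ arises because all three $\beta_i$'s exhibit their poles compatibly through the single trivialisation used at $t=0$.

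For the reverse direction $(b)\Rightarrow(a)$, given commuting symmetric $(B_1,B_2,B_3)$ with cyclic vector $w$, I would build an explicit meromorphic solution as follows. Use the common filtration to put $B_i$ simultaneously in a Donaldson-style normal form; this gives a constant $(\beta_1,\beta_2,\beta_3)$ solving the complex equations with a constant $\alpha$. Then invoke the existence half of the Kempf-Ness correspondence to find a (unique up to unitary gauge) complex gauge transformation $g(t)$ moving this representative into one that also satisfies the real moment equation on $(0,1)$ with the prescribed meromorphic singularities at $0$ and $1$. The key technical input is the solvability of a nonlinear ODE for $g(t)$ whose linearisation at the endpoints is controlled by the cyclicity and filtration assumptions — these are exactly the conditions guaranteeing that the indicial roots avoid obstructions.

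The main obstacle I expect is the asymptotic analysis at the two boundary poles, in particular proving that an arbitrary meromorphic solution can be brought, by a gauge transformation that is itself meromorphic with controlled growth, to a standard normal form compatible with all three $\beta_i$ simultaneously. In the classical (one $\beta$) Nahm setting Donaldson uses the $SL(2)$-representation attached to the nilpotent residue; here one must simultaneously accommodate three commuting structures, and showing that the residues at the two endpoints match consistently with the $O(k,\BC)$-symmetry is the delicate step. Once this normal form and its uniqueness (up to $O(k,\BC)$) are established, checking that the two constructions are mutually inverse becomes a routine verification at the level of the explicit model solutions.
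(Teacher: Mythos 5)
Your overall two-step strategy --- a Kempf--Ness reduction identifying meromorphic solutions of the full system with solutions of the complex equations modulo complexified gauge (the paper's bijection $\Xi:\MMdp\to\MMnc$), followed by an algebraic classification of the latter up to $O(k,\mathbb{C})$ --- is the route the paper takes, and you correctly single out the pole asymptotics as the hard analytic step. (The paper handles it by solving the real equation on $[\ep,1-\ep]$ with Dirichlet data, getting uniform bounds from the convexity inequality $\frac{d^2}{dt^2}\sigma(h)\ge -(|\hF|+|\hF'|)$ for $\sigma(h)=\Tr h+\Tr h^{-1}-2$ together with the maximum principle, and then proving separately that the limiting gauge factor has bounded derivative so that no new poles appear.)

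The genuine gap is in how you produce the datum $w$ and why the three filtrations agree. You propose to read the cyclic vector $w$ off ``the leading coefficient of the residue,'' but after the normalization of Assumption~\ref{assump}(iv) the residues of \emph{every} solution are the universal pair $(a_0,s_ib_0)$ coming from a principal $\slf_2$-triple; they carry no information distinguishing one solution from another, so a map defined this way collapses everything and cannot be injective. The paper instead sets $B_i:=\be_i(\tfrac12)$ and $w:=s_0(\tfrac12)$, where $s_0$ is the unique solution of the linear ODE $\frac{ds}{dt}+2\al s=0$ with $\lim_{t\to 0}t^{-\frac{k-1}{2}}s_0(t)=v$, $v$ being the unit lowest-weight vector that is part of the Nahm complex data; the transport of $v$ from $t=0$ to $t=\tfrac12$ is where the nontrivial information lives. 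This construction is also what makes the common-filtration condition (iii) come out, which your sketch attributes only vaguely to ``compatibility of poles'': the paper identifies $F^l_{(B_i,w)}$ with the $t=\tfrac12$ evaluation of the space of solutions $s$ of the same ODE for which $t^{-\frac{k-1}{2}+l-1}s$ has a limit at $0$, a space depending only on $\al$ and hence independent of $i$. Without these two ingredients the forward map (a)$\to$(b) is not actually defined, and there is nothing for the inverse construction (conjugating the companion normal forms of $B_1',B_2',B_3'$ by the explicit diagonal gauge $p(t)=\diag(f^{\frac{k-1}{2}},\cdots,f^{-\frac{k-1}{2}})$) to be checked against.
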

		
		\textbf{Acknowledgements.} The author wishes to thank Simon Donaldson, \'Akos Nagy, Rafe Mazzeo, Gon\c{c}alo Oliveira, Simon Salamon, Mark Stern, Sergy Cherkis for numerous helpful discussions. 
	\end{section}
	
	\begin{section}{Cross Product and Nahm's Equations}
		In this section, we will introduce the relationship of the cross product and the Nahm's equations. 
		\begin{subsection}{Cross product and the group $G_2$}
			To begin with, we briefly introduce some linear algebra related to the octonions and $G_2$. For a nice review of octonionic related linear algebra, we refer to \cite{salamonwalpuski2017}. 
			\begin{definition}
				Let $V$ be a finite dimensional real vector space with a norm $|\cdot|$, we say $V$ is a normed division algebra if it has a structure of algebra with identity such that $|a\cdot b|=|a||b|$.
			\end{definition}
			We have the following classification theorem by Hurwitz
			\begin{theorem}{\cite{hurwitz1922}}
				there exists $4$ normed division algebra up to isomorphism, which is the real numbers $\BR$, the complex numbers $\BC$, the quaternions $\BH$ and the octonions $\BO$(non-associative).
			\end{theorem}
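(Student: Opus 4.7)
The plan is to follow the classical Cayley--Dickson approach: deduce enough bilinear identities from $|ab|=|a||b|$ to manufacture a conjugation, and then produce $V$ as an iterated doubling of the one-dimensional subalgebra $\mathbb{R}\cdot 1 \subset V$, showing that the doubling process must halt after the fourth step because a fifth doubling would introduce zero divisors.

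Step 1, linear-algebraic setup. First I would polarize the multiplicative norm condition to obtain an inner product $\langle a,b\rangle = \tfrac{1}{2}(|a+b|^2-|a|^2-|b|^2)$ and extract the identities
\begin{equation*}
\langle ac,bc\rangle = \langle a,b\rangle |c|^2, \qquad \langle ac,bd\rangle + \langle ad,bc\rangle = 2\langle a,b\rangle\langle c,d\rangle,
\end{equation*}
and their left-sided analogues. Using these I would define the conjugation $\bar a := 2\langle a,1\rangle \cdot 1 - a$ and verify the usual formulas $a\bar a = |a|^2 \cdot 1$, $\overline{ab}=\bar b\bar a$, together with the scalar identity $a + \bar a \in \mathbb{R}\cdot 1$. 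This gives every element a quadratic minimal polynomial over $\mathbb{R}$, hence $\mathbb{R}\cdot 1$ is a $*$-subalgebra of $V$.

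Step 2, the doubling lemma. Suppose $V_0 \subset V$ is a proper $*$-subalgebra containing $1$, and choose a unit $e \in V_0^{\perp}$. I would show that $V_1 := V_0 \oplus V_0\cdot e$ is a $*$-subalgebra of $V$, that the sum is orthogonal, and that the multiplication takes the Cayley--Dickson form
\begin{equation*}
(a+be)(c+de) = (ac - \bar d\, b) + (da + b\bar c)\,e,
\end{equation*}
with norm $|a+be|^2 = |a|^2 + |b|^2$. The verification is a direct computation from the identities of Step 1, using that conjugation on $V_0$ coincides with the restriction of the conjugation of $V$. Iterating Step 2 starting from $V_0 = \mathbb{R}\cdot 1$ produces a chain $\mathbb{R} \subset \mathbb{C} \subset \mathbb{H} \subset \mathbb{O} \subset \cdots$ of normed division $*$-subalgebras of $V$, with dimensions doubling at each stage; and since each stage equals the next whenever $V_0 = V$, the chain stabilizes at $V$.

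Step 3, termination. This is the main obstacle. I would trace which algebraic property is lost at each doubling using the Cayley--Dickson formula: doubling $\mathbb{R}$ loses the property $\bar a = a$, doubling $\mathbb{C}$ loses commutativity, doubling $\mathbb{H}$ loses associativity (but alternativity survives). To see that no further doubling is possible inside a normed division algebra, I would carry out the doubling $\mathbb{O} \to \mathbb{O}\oplus\mathbb{O}\cdot e$ formally and test $|xy|^2 = |x|^2|y|^2$; expanding both sides with the Cayley--Dickson product yields an identity that reduces, via the inner-product formulas of Step 1, to the associator $[a,b,c] := (ab)c - a(bc)$ of three generic elements of $\mathbb{O}$. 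Since $\mathbb{O}$ is not associative, one can produce explicit $a,b,c,d \in \mathbb{O}$ with $(a + be)(c + de) = 0$ while both factors are nonzero, contradicting the division-algebra property of $V$. Hence the chain cannot extend beyond $\mathbb{O}$, and $V$ is isomorphic to one of $\mathbb{R},\mathbb{C},\mathbb{H},\mathbb{O}$.

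The delicate point will be Step 3: correctly identifying the obstruction as non-associativity of $\mathbb{O}$ and writing an explicit zero-divisor for the sedenion product, as opposed to merely asserting failure of the norm identity. The rest of the argument is bookkeeping, but the bilinear identities of Step 1 must be set up carefully so that the doubling computation in Step 2 proceeds without circularity.
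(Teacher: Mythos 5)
The paper itself offers no proof of this statement: it is quoted as Hurwitz's classical theorem with a bare citation to \cite{hurwitz1922}, so there is no internal argument to compare against. Your proposal is the standard Cayley--Dickson doubling proof (the one in Baez's survey on the octonions, or Conway--Smith), and its outline is sound: polarize $|ab|=|a||b|$ to get the bilinear composition identities, define $\bar a = 2\langle a,1\rangle 1 - a$, prove the doubling lemma for a proper $*$-subalgebra $V_0$ with unit $e\in V_0^{\perp}$, and terminate because the norm identity on the double forces $V_0$ to be associative. This route is genuinely different from Hurwitz's original 1922 argument, which works with composition of quadratic forms and a counting argument for anticommuting matrix systems; the doubling proof buys more, since it exhibits each algebra structurally as an iterated double of $\mathbb{R}$ rather than only pinning down the admissible dimensions.

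Two points need repair. First, your polarization in Step 1 presupposes that the norm satisfies the parallelogram law, i.e., comes from an inner product. The paper's definition says only ``norm,'' so you should either adopt the standard Hurwitz hypothesis that the norm is Euclidean (which is what the cited theorem classically assumes) or invoke the harder Urbanik--Wright result for absolute-valued algebras; your proof as written does not cover an arbitrary norm. Second, the endgame of Step 3 is misstated: since $V$ is by hypothesis a normed division algebra, you cannot exhibit a genuine zero divisor inside it, so ``produce explicit $a,b,c,d$ with $(a+be)(c+de)=0$'' is not how the contradiction can land. The correct conclusion, which your own computation already delivers, is direct: expanding $|(a+be)(c+de)|^2=(|a|^2+|b|^2)(|c|^2+|d|^2)$ with the Cayley--Dickson product, the cross terms cancel if and only if $\langle ac,\bar d\,b\rangle=\langle da,\,b\bar c\rangle$ for all $a,b,c,d\in V_0$, which by the adjoint identities $\langle xc,y\rangle=\langle x,y\bar c\rangle$ and $\langle cx,y\rangle=\langle x,\bar c y\rangle$ is exactly $(da)c=d(ac)$, i.e., associativity of $V_0$. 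Non-associativity of $\mathbb{O}$ then contradicts multiplicativity of the norm on $V$ immediately; equivalently, the formal sedenion double has zero divisors and therefore cannot occur as a subalgebra of $V$, but the zero divisors live in the formal double, not in $V$. With those two adjustments the argument is complete and correct.
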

			For every normed division algebra $V$, we define the real part $\mathrm{Re}(V):=\mathrm{span}\{\Id\}$ and imaginary part $\Im(V):=\mathrm{Re}(V)^{\perp}$. For a normed division algebra $V$, we can define the a linear map $\times:\Im(V)\times\Im (V)\to \Im(V),\;
			a\times b:=\Im(ab),$ which we called the cross product. 
			
			We explicitly write down the cross product over $\BH$ and $\BO$ that will be used in our paper 
			\begin{example}
				When $V=\mathbb{H}$, we identified $\Im(V)$ with $\mathbb{R}^3$, then the crossproduct for vectors $\mathbf{u}=(u_1,u_2,u_3)$ and $\mathbf{v}=(v_1,v_2,v_3)$ is given by
				$$\mathbf{u}\times\mathbf{v}=\begin{pmatrix}
				u_2v_3-u_3v_2\\
				u_3v_1-u_1v_3\\
				u_1v_2-u_2v_1
				\end{pmatrix}.$$
				
				When $V=\BO$, we identified $\Im(V)$ with $\mathbb{R}^7$, then the crossproduct for vector $\mathbf{u}=(u_1,\cdots,u_7)$ and $\mathbf{v}=(v_1,\cdots,v_7)$ is given by
				
				$$\mathbf{u}\times\mathbf{v}=\begin{pmatrix}
				u_2v_3-u_3v_2+u_4v_5-u_5v_4+u_6v_7-u_7v_6\\
				-u_1v_3+v_1u_3+u_4v_6-u_6v_4-u_5v_7+u_7v_5\\
				u_1v_2-v_1u_2-u_4v_7+u_7v_4-u_5v_6+u_6v_5\\
				-u_1v_5+u_5v_1-u_2v_6+u_6v_2+u_3v_7-u_7v_3\\
				u_1v_4-u_4v_1+u_2v_7-u_7v_2+u_3v_6-v_3u_6\\
				-u_1v_7+v_1u_7+u_2v_4-u_4v_2-u_3v_5+v_3u_5\\
				u_1v_6-u_6v_1-u_2v_5+v_2u_5-u_3v_4+u_4v_3
				\end{pmatrix}.$$
			\end{example}
			
			Over $\BO$, we define the associative 3-form $\phi\in \Lambda^3(T^{\st}\Im(\BO))$ as $\phi(a,b,c):=\lan a\times b,c\ran$. If we denote $e_1,\cdots,e_7$ be an orthogonal frame of $T^{\st}\Im(\BO)$ with $e^{ijk}=e_i\we e_j\we e_k$, then in this paper, we write the 3-form as
			\begin{equation}
			\phi=\frac{1}{6}\sum_{i,j,k=1}^7f_{ijk}e^{ijk}=e^{123}+e^{145}+e^{167}+e^{246}-e^{257}-e^{347}-e^{356}.
			\end{equation} 
			
			We call $f_{ijk}$ the structure constants of the octonion and we can describe the octonion algebra as $\BO:=\{e_0,e_1,\cdots,e_7|e_0=\Id,\;e_l^{2}=-1,\;e_ie_j=f_{ijk}e_k\},$
			where $l=0,1,\cdots,7$ and $i,j=1,\cdots,7$.
			
			We define the group $G_2:=\{A\in GL(7,\mathbb{R})|A^{\st}\phi=\phi\}$ be the group that preserve the 3-form, by the work of Bryant, we have
			\begin{proposition}{\cite{bryant1987}}
				$G_2=\{A\in GL(7,\mathbb{R})||A^{\st}a|=|a|,\;A^{\st}\phi=\phi,\;A^{\st}(a \times b)=A^{\st}a\times A^{\st}b,\;\det(A)=1\}$,
				where $a,b\in \Im(\BO)$.  
			\end{proposition}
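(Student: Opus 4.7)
The plan is to show that each of the listed properties --- norm preservation, cross product preservation, and $\det A = 1$ --- is automatically a consequence of the defining relation $A^{\st}\phi = \phi$, since the reverse inclusion is tautological. The backbone of the argument is Bryant's classical observation that the entire metric structure on $\Im(\BO) = \mathbb{R}^7$ can be recovered from the $3$-form $\phi$ alone, via the polynomial identity
\[
6\, \langle a, b \rangle \, \mathrm{vol} = (a \lrcorner \phi) \we (b \lrcorner \phi) \we \phi,
\]
where $\mathrm{vol}$ is the standard volume form on $\mathbb{R}^7$ and $a, b \in \mathbb{R}^7$. The first step is to verify this identity by direct computation in the orthonormal frame $e_1, \dots, e_7$ using the explicit expression $\phi = e^{123} + e^{145} + e^{167} + e^{246} - e^{257} - e^{347} - e^{356}$; checking the cases $a = b = e_i$ and $a = e_i$, $b = e_j$ with $i \neq j$ is enough because both sides are symmetric bilinear forms in $(a,b)$ tensored with a top form.

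Once the recovery identity is in hand, I would pull it back by $A^{\st}$ and use $A^{\st}\phi = \phi$ to obtain $\langle A^{\st}a, A^{\st}b \rangle \, A^{\st}\mathrm{vol} = \langle a, b \rangle \, \mathrm{vol}$ for all $a, b$. Taking $a = b \neq 0$ forces $A^{\st}\mathrm{vol}$ to be a \emph{positive} scalar multiple of $\mathrm{vol}$, so in particular $\det A > 0$. Writing $A^{\st}\mathrm{vol} = \mu\, \mathrm{vol}$ the identity becomes $\langle A^{\st}a, A^{\st}b\rangle = \mu^{-1}\langle a, b\rangle$, and evaluating $\mu$ by any trace-type bookkeeping --- for instance, comparing the determinant of $A^{\st}A$ in a fixed basis against $\mu^{-7}$ versus $\mu^{-2}$ coming from the top form --- pins $\mu = 1$, giving simultaneously $|A^{\st}a| = |a|$ and $\det A = 1$.

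With $A$ now known to preserve both the inner product and $\phi$, the cross product is preserved automatically: from $\langle a \times b, c\rangle = \phi(a,b,c)$ we compute
\[
\langle A^{\st}a \times A^{\st}b,\, A^{\st}c\rangle = \phi(A^{\st}a, A^{\st}b, A^{\st}c) = (A^{\st}\phi)(a,b,c) = \phi(a,b,c) = \langle A^{\st}(a\times b),\, A^{\st}c\rangle,
\]
where the last step uses the already-established fact that $A$ is an isometry. Since $A^{\st}$ is invertible, $A^{\st}c$ ranges over all of $\mathbb{R}^7$, and nondegeneracy of the inner product yields $A^{\st}(a\times b) = A^{\st}a \times A^{\st}b$. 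The converse inclusion is immediate: each condition listed on the right-hand side already includes $A^{\st}\phi = \phi$.

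The main obstacle is the recovery identity for the metric from $\phi$. It is a purely algebraic statement on $\mathbb{R}^7$, but proving it requires either a careful hand computation with the seven-term expression for $\phi$ or direct appeal to Bryant's paper; the delicate point is to extract a \emph{positive definite} symmetric bilinear form and a nowhere-vanishing top form from $\phi$ without introducing sign or normalization ambiguities. Once this algebraic fact is granted, all of the remaining implications are formal consequences.
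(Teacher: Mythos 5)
Your proposal is correct, and it is in fact a complete argument where the paper offers none: the paper simply cites \cite{bryant1987} for this proposition without giving any proof, so there is no internal argument to compare against. What you have reconstructed is precisely Bryant's classical route: the recovery identity $(a \lrcorner \phi)\wedge(b\lrcorner \phi)\wedge\phi = 6\langle a,b\rangle\,\mathrm{vol}$ (which does hold, with constant $6$, for the sign convention $\phi=e^{123}+e^{145}+e^{167}+e^{246}-e^{257}-e^{347}-e^{356}$ used in the paper, and which it suffices to check on basis pairs since both sides are symmetric bilinear in $(a,b)$ --- the symmetry of the left side coming from the fact that $2$-forms commute under wedge), followed by the pullback argument to force isometry and unimodularity, and then the nondegeneracy argument for the cross product via $\langle a\times b,c\rangle=\phi(a,b,c)$. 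Two small points deserve attention. First, the pullback step silently uses the naturality identity $\iota_a(A^{\st}\phi)=A^{\st}\bigl(\iota_{A^{\st}a}\,\phi\bigr)$, where one must keep straight that the paper writes $A^{\st}$ both for the pullback on forms and for the induced action on vectors; this is standard but worth stating, since it is what converts $A^{\st}\phi=\phi$ into $\langle A^{\st}a,A^{\st}b\rangle\,A^{\st}\mathrm{vol}=\langle a,b\rangle\,\mathrm{vol}$. Second, your exponent bookkeeping has a slip: with $A^{\st}\mathrm{vol}=\mu\,\mathrm{vol}$ one gets $A^{\intercal}A=\mu^{-1}\mathrm{Id}$, hence $\mu^{2}=\det(A^{\intercal}A)=\mu^{-7}$, i.e.\ $\mu^{9}=1$ --- the comparison is $\mu^{2}$ against $\mu^{-7}$, not $\mu^{-2}$ against $\mu^{-7}$. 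The conclusion is unaffected, since any relation $\mu^{k}=1$ with $k\neq 0$ and $\mu>0$ pins $\mu=1$, but as written the count is wrong. With those repairs the argument is airtight and, unlike the paper, self-contained modulo the finite verification of the recovery identity.
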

			
			As the group $G_2$ acts on $\Im\;\BO$, the induced action on the 21 dimension space $\Lam^2(\Im(\BO))$ which splits as a sum of the 7-dimensional and 14-dimensional irreducible representations that can be written as
			\begin{equation}
			\begin{split}
			\Lam^2_7:&=\{\al\in \Lam^2|\st(\phi\we\al)=2\al\}=\{\iota_{e_i}\phi\},\\
			\Lam^2_{14}:&=\{\al\in \Lam^2|\st(\phi\we\al)=-\al\}.
			\end{split}
			\end{equation}
			
			\begin{definition}
				Let $W$ be any 3-dimensional linear subspace of $\Im(\BO)$, we define $W$ be associative subspace if $\phi|_{W}=\Vol_W$.  
			\end{definition}
			
		\end{subsection}
		
		\begin{subsection}{Nahm's equations and cross product}
			Let $G$ be a compact Lie group with Lie algebra $\mg$. Let $I$ be the closed interval $[0,1]$ with coordinate $t$. We consider a trivial principle bundle $G$ bundle over $I$. Let $\na_t$ be a connection over $I$, as we are working on a trivial bundle, we write $\na_t=\frac{d}{dt}+X_0$, where $X_0:I\to \mg$ and $\frac{d}{dt}$ is the product connection over the trivial bundle.
			
			Let $V$ be a normed division algebra, we write $\mbX=(X_1,\cdots,X_k)$ where each $X_i$ are $\mg$ valued functions on $I$, $X_i:I\to \Im(V)\otimes \mg$, where $k=\dim(V)-1$. The Nahm's equations are equations of connection $\na_t$ and $\mbX$:
			\begin{equation}
			\begin{split}
			\na_t\mbX+\mbX\times \mbX=0,
			\end{split}
			\end{equation}
			where $\times: \Im(V)\otimes \mg\times \Im(V)\otimes \mg\to \Im(V)\otimes \mg$ is combination of the cross product in $\Im(V)$ and the multiplication on the Lie algebra $\mg$.
			
			We will explicitly write down the equations over $\BC,\BH,\BO$.
			When $V=\BC,\;X=X_1$, the complex Nahm's equations can be written as $\frac{d}{dt}X_1+[X_0,X_1]=0$.
			
			When $V=\BH,\;\mbX=(X_1,X_2,X_3)$, then the quaternion Nahm's equations can be written as
			\begin{equation}
			\begin{split}
			&\frac{dX_1}{dt}+[X_0,X_1]+[X_2,X_3]=0,\\
			&\frac{dX_2}{dt}+[X_0,X_2]+[X_3,X_1]=0,\\
			&\frac{dX_3}{dt}+[X_0,X_3]+[X_1,X_2]=0.
			\label{Eq_QuaternionNahm}
			\end{split}
			\end{equation}
			
			When $V=\BO,\;\mbX=(X_1,\cdots,X_7)$, then the octonionic Nahm's equations can be written as
			\begin{equation}
			\begin{split}
			&\frac{dX_1}{dt}+[X_0,X_1]+[X_2,X_3]+[X_4,X_5]+[X_6,X_7]=0,\\
			&\frac{dX_2}{dt}+[X_0,X_2]-[X_1,X_3]+[X_4,X_6]-[X_5,X_7]=0,\\
			&\frac{dX_3}{dt}+[X_0,X_3]+[X_1,X_2]-[X_4,X_7]-[X_5,X_6]=0,\\
			&\frac{dX_4}{dt}+[X_0,X_4]-[X_1,X_5]-[X_2,X_6]+[X_3,X_7]=0,\\
			&\frac{dX_5}{dt}+[X_0,X_5]+[X_1,X_4]+[X_2,X_7]+[X_3,X_6]=0,\\
			&\frac{dX_6}{dt}+[X_0,X_6]-[X_1,X_7]+[X_2,X_4]-[X_3,X_5]=0,\\
			&\frac{dX_7}{dt}+[X_0,X_7]+[X_1,X_6]-[X_2,X_5]-[X_3,X_4]=0.
			\label{Eq_octonionNahm}
			\end{split}
			\end{equation}
			Using the structure constants $f_{ijk}$, we can rewrite the octonionic equations as $$\frac{dX_i}{dt}+[X_0,X_i]+\sum_{j,k}\frac{1}{2}f_{ijk}[X_j,X_k]=0.$$
			
			The octonionic Nahm's equations have a $G_2$ action, for $A\in G_2$, as $A$ preserve the product structure, for any $(\na_t,\mbX)$ a solution to \eqref{Eq_octonionNahm}, $(\na_t,A\mbX)$ is also a solution to \eqref{Eq_octonionNahm}.
			
			We define $V:=\{I\to \mg\otimes \mathbb{R}^8\}$
			and write 
			\begin{equation*}
			\NO=\{(X_0,X_1,\cdots,X_7)\in V|\eqref{Eq_octonionNahm}\;holds\}.
			\end{equation*}
			
			Let $\MG$ be the gauge group of the trivial bundle, which is $C^2$ maps $[0,1]\to G$. Then $\MG$ acts on $V$ by
			\begin{equation}
			\begin{split}
			X_0\to gX_0g^{-1}-\frac{dg}{dt}g^{-1},\;X_i\to gX_ig^{-1}.
			\end{split}
			\end{equation}
			It is straight forward to check that equations $\eqref{Eq_octonionNahm}$ are invariant under $\MG$ action. We denote $\MG_0$ be the normal subgroup of $\MG$ defined as $\MG_0:=\{g\in\MG|g(0)=g(1)=1\}$, and we define octonionic moduli space $\MO$ be the quotient space 
			$$\MO:=\NO/\MG_0.$$
			
			Similarly, we can define $\NH$ and $\MH$ for the solutions to \eqref{Eq_QuaternionNahm}.
			
			We can easily regard solutions to the quaternion Nahm's equations' as a subsolution to the octonionic Nahm's equations moduli space. Let $(\frac{d}{dt}+Y_0,Y_1,Y_2,Y_3)$ be a solution to the quaternion Nahm's equations. We have the following observation:
			
			\begin{lemma}
				\label{Lemma_embedding}
				If we set $X_0=Y_0, X_1=Y_1,\;X_2=Y_2,\;X_3=Y_3,\;X_4=X_5=X_6=X_7$=0, then $(\frac{d}{dt}+X_0,X_1,\cdots,X_7)$ is a solution to the octonionic Nahm's equations \eqref{Eq_octonionNahm}.
			\end{lemma}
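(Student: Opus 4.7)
The plan is a direct verification. With the substitution $X_4=X_5=X_6=X_7=0$, every bracket in equations $(4)$–$(7)$ of \eqref{Eq_octonionNahm} contains at least one vanishing factor, and the left-hand side of each of these four equations becomes $0$ identically, so they hold trivially. One only needs to check the first three equations of \eqref{Eq_octonionNahm} after setting $X_4=\cdots=X_7=0$ reduce to the quaternion Nahm system \eqref{Eq_QuaternionNahm}.

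Concretely, first I would drop the four vanishing variables and observe that equation $(1)$ of \eqref{Eq_octonionNahm} collapses to
\[
\tfrac{d X_1}{dt}+[X_0,X_1]+[X_2,X_3]=0,
\]
which is the first equation of \eqref{Eq_QuaternionNahm} with $X_i=Y_i$. Next, equation $(2)$ collapses to $\tfrac{d X_2}{dt}+[X_0,X_2]-[X_1,X_3]=0$, and rewriting $-[X_1,X_3]=[X_3,X_1]=[Y_3,Y_1]$ gives the second quaternion equation. Finally equation $(3)$ collapses to $\tfrac{d X_3}{dt}+[X_0,X_3]+[X_1,X_2]=0$, which is the third quaternion equation. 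Thus $(\tfrac{d}{dt}+X_0,X_1,\ldots,X_7)$ satisfies \eqref{Eq_octonionNahm}.

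A conceptual way to phrase the same argument, which also motivates the result and foreshadows the discussion of associative subspaces, is to notice that $W=\mathrm{span}\{e_1,e_2,e_3\}\subset\Im(\BO)$ is an associative subspace since $\phi|_W=e^{123}=\Vol_W$, and the restriction of the octonionic cross product to $W\times W$ agrees with the quaternionic cross product on $\Im(\BH)$. Hence for any $\mbX$ taking values in $W\otimes\mg$, the nonlinear term $\mbX\times\mbX$ lands in $W\otimes\mg$ and equals the quaternionic expression. Consequently the inclusion $\Im(\BH)\hookrightarrow\Im(\BO)$ induces an inclusion $\NH\hookrightarrow\NO$.

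There is no real obstacle here; the content of the lemma is that the structure constants $f_{ijk}$ restricted to indices in $\{1,2,3\}$ reproduce the Levi-Civita symbol, together with the trivial fact that all remaining equations have identically vanishing left-hand side. The only thing worth being careful about is the sign bookkeeping in equations $(2)$ and $(3)$, which is handled by the antisymmetry of the bracket.
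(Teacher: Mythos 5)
Your verification is correct: the paper states this lemma without proof as an immediate observation, and your direct check — equations $(4)$–$(7)$ vanish identically since every bracket contains one of $X_4,\dots,X_7$, while equations $(1)$–$(3)$ reduce to \eqref{Eq_QuaternionNahm} after using $-[X_1,X_3]=[X_3,X_1]$ — is exactly the intended argument. Your conceptual remark about the associative subspace $\mathrm{span}\{e_1,e_2,e_3\}$ also matches the paper's subsequent proposition on associative planes, so there is nothing to add.
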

			
			\begin{proposition}
				Let $E$ be any associative plane over $\mathbb{R}^7$ which across the origin, let $\iota_E$ be the involution along $E$ on $\mathbb{R}^7$, then there exists an embedding $f_{E}:\MH\to \MO$ such that $f_{E}(\MH)$ equals to the fixed point of $\Id_{\mathbb{R}^7}-\iota_E$.
			\end{proposition}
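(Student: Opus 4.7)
The plan is to reduce the statement to the case of the standard associative plane $E_0 := \mathrm{span}(e_1,e_2,e_3)$ by exploiting the transitivity of $G_2$ on associative $3$-planes. First I would check that $\iota_E\in G_2$ for any associative $E$. For $E_0$ this follows by inspection of $\phi = e^{123}+e^{145}+e^{167}+e^{246}-e^{257}-e^{347}-e^{356}$: each monomial contains an even number of indices in $\{4,5,6,7\}$, so $\iota_{E_0}^*\phi = \phi$ and $\iota_{E_0}\in G_2$. Writing a general associative plane as $E = A\cdot E_0$ with $A\in G_2$ then yields $\iota_E = A\iota_{E_0}A^{-1}\in G_2$. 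Since the $G_2$ action on the $\mathbb{R}^7$ factor commutes with the $\MG_0$ action on the Lie algebra factor, $\iota_E$ descends to an involution on $\MO$, and the set of classes annihilated by $\mathrm{Id}_{\mathbb{R}^7}-\iota_E$, i.e.\ the fixed-point set of $\iota_E$, is well defined.

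Second, I would define the embedding. Lemma \ref{Lemma_embedding} provides a $\MG_0$-equivariant injection $\NH\hookrightarrow\NO$ by $(\na_t,Y_1,Y_2,Y_3)\mapsto(\na_t,Y_1,Y_2,Y_3,0,0,0,0)$; let $f_{E_0}\colon \MH\to \MO$ be the induced map on moduli spaces. This is injective since vanishing of the last four components is a gauge-invariant condition. For a general associative plane $E=A\cdot E_0$ set $f_E := A_*\circ f_{E_0}$; this is independent of the choice of $A$ modulo the stabilizer of $E_0$ in $G_2$, which preserves the image of $f_{E_0}$. The forward inclusion $f_E(\MH)\subseteq\mathrm{Fix}(\iota_E)$ is then immediate, as every representative in the image is literally fixed by $\iota_E$.

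For the reverse inclusion, after applying $A^{-1}\in G_2$ one may assume $E = E_0$. Given an $\iota_{E_0}$-fixed class, choose a representative $(\na_t,\mbX)\in\NO$; fixedness produces a gauge transformation $g\in\MG_0$ sending $(\na_t,\mbX)$ to $\iota_{E_0}(\na_t,\mbX)$. Averaging over the $\mathbb{Z}/2$ action generated by the combined symmetry $g\circ\iota_{E_0}$, which preserves the equations and squares to a stabilizer element, yields a gauge-equivalent representative with $X_4=X_5=X_6=X_7=0$ on the nose. Once this is arranged, the octonionic system \eqref{Eq_octonionNahm} collapses onto \eqref{Eq_QuaternionNahm}: for $i\in\{4,5,6,7\}$ and $j,k\in\{1,2,3\}$ every structure constant $f_{ijk}$ vanishes because no monomial of $\phi$ has that index pattern, so those four equations are automatic; and for $i,j,k\in\{1,2,3\}$ one has $f_{ijk}=\epsilon_{ijk}$, so the remaining three equations are precisely the quaternionic Nahm equations. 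Hence the class lies in $f_{E_0}(\MH)$, and the two sets coincide.

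The hardest step is the averaging argument in the reverse inclusion: extracting an on-the-nose $\iota_{E_0}$-invariant representative from a class that is only invariant up to gauge requires some care with the endpoint constraints defining $\MG_0$ and with possible nontrivial stabilizers, but is otherwise a fairly standard fixed-point slice construction. All remaining steps are linear algebra: transitivity of $G_2$ on associative planes, parity of monomials of $\phi$, and the vanishing pattern of the structure constants $f_{ijk}$.
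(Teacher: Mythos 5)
Your overall route is the same as the paper's: reduce to the standard associative plane $E_0=\mathrm{span}(e_1,e_2,e_3)$ by the transitivity of $G_2$ on associative $3$-planes and then invoke Lemma \ref{Lemma_embedding}; the paper's proof consists of exactly these two sentences and leaves the reverse inclusion ($\mathrm{Fix}(\iota_E)\subseteq f_E(\MH)$) implicit, so the extra work you do there is a genuine addition. The one step you should replace is the averaging argument. As written it is not really an argument --- there is no linear structure on $\NO$ over which to average, and the invariant representative is never actually produced --- and, more to the point, it is unnecessary. Since $\iota_{E_0}$ does not touch the connection component, a $g\in\MG_0$ carrying $(\na_t,\mathbf{X})$ to $(\na_t,\iota_{E_0}\mathbf{X})$ must in particular fix $\na_t$, i.e.\ satisfy $\frac{dg}{dt}=gX_0-X_0g$ with $g(0)=1$; conjugating by parallel transport and using uniqueness for this linear ODE gives $g\equiv 1$, hence $\iota_{E_0}\mathbf{X}=\mathbf{X}$ on the nose and $X_4=\cdots=X_7=0$ for the representative you started with. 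In other words, the $\MG_0$-action is free, so the subtlety you flag as the hardest step evaporates. With that substitution, the rest of what you write --- $\iota_{E_0}^{\st}\phi=\phi$ by the parity count on the monomials of $\phi$, conjugation to get $\iota_E\in G_2$ for general $E$, and the collapse of the octonionic system onto the quaternionic one via the vanishing pattern of the structure constants $f_{ijk}$ --- is correct and supplies details the paper omits.
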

			\begin{proof}
				By \cite{harvey1982calibrated}, the group $G_2$ is transitive over the associative planes. We denote $E_0$ be the associative plane span by $e_1,e_2,e_3$, then for the associative plane $E$, there exists $A\in G_2$ such that $AE=E_0$. The proposition follows immediately from the previous Lemma.
			\end{proof}
			
		\end{subsection}
	\end{section}
	
	\begin{section}{The Moduli space of the octonionic Nahm's equations}
		In this section, we will discuss the structures of the moduli space to the octonionic Nahm's equations.
		\begin{subsection}{The Moduli Space as a Smooth Open Manifold}
			We now are going to prove the moduli space is smooth with the method used by Kronheimer \cite[Page 3-5]{kronheimer2004hyperkahler}. Let $V=\{I\to\mg\otimes \mathbb{R}^8\}$, we have
			\begin{lemma}{\cite[Page 3-5]{kronheimer2004hyperkahler}}
				\label{lem_slicing}
				Let $(X_0,X_1,\cdots,X_7)\in V$, for each $(b_0,\cdots,b_7)\in V$, there exists $g\in\MG_0$ such that for
				$$(a_0,\cdots,a_7):=g(X_0+b_0,X_1+b_1,\cdots,X_7+b_7)-(X_0,\cdots,X_7),$$
				we have
				$\frac{da_0}{dt}+\sum_{i=0}^7[X_i,a_i]=0$. 
			\end{lemma}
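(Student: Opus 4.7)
The plan is to realize the desired $g$ as a critical point of an $L^2$ energy functional on $\MG_0$. Set
$$F(g):=\tfrac12\bigl\|g(X+b)-X\bigr\|_{L^2(I)}^2=\tfrac12\int_0^1\sum_{i=0}^7|a_i(g)|^2\,dt,$$
and differentiate along the variation $g\mapsto e^{s\xi}g$ with $\xi(0)=\xi(1)=0$. Using the infinitesimal transformation laws $\delta a_0=-\dot\xi+[\xi,a_0+X_0]$ and $\delta a_i=[\xi,a_i+X_i]$ for $i\ge 1$, the antisymmetry $\langle a_i,[\xi,a_i]\rangle=0$, and one integration by parts, one finds
$$\tfrac{d}{ds}\Big|_{s=0}F(e^{s\xi}g)=\int_0^1\Big\langle\xi,\ \tfrac{da_0}{dt}+\sum_{i=0}^7[X_i,a_i]\Big\rangle\,dt.$$
Hence the slice equation is exactly the Euler--Lagrange equation for $F$, and the lemma reduces to producing a critical point.

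For small $b$ I would simply apply the implicit function theorem to $\Phi(g):=d_X^*(g(X+b)-X)$, where $d_X\xi:=(-\dot\xi+[\xi,X_0],[\xi,X_1],\dots,[\xi,X_7])$ is the infinitesimal gauge action and $d_X^*$ its formal $L^2$-adjoint. The linearization at $g=\Id$, $b=0$ is $d_X^*d_X$, a self-adjoint second-order elliptic ODE operator on $I$ with Dirichlet boundary conditions; since $\|d_X\xi\|^2>0$ for any nontrivial $\xi$ vanishing at the endpoints, $d_X^*d_X$ is invertible and the IFT yields a small $g\in\MG_0$ with $\Phi(g)=0$. For arbitrary $b$ I would instead use the direct method of the calculus of variations: complete $\MG_0$ to the Banach Lie group of $W^{1,2}$ maps $I\to G$ pinned to the identity at the endpoints, take a minimizing sequence, and extract a weakly $W^{1,2}$-convergent subsequence. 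The Sobolev embedding $W^{1,2}(I)\hookrightarrow C^0(I)$ preserves the boundary values in the limit, weak lower semicontinuity of $F$ delivers a minimizer $g_\infty$, and elliptic regularity applied to the Euler--Lagrange ODE bootstraps $g_\infty$ back into $\MG_0$.

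The main obstacle is the $W^{1,2}$-compactness of a minimizing sequence, because the gauge transformation law for $X_0$ contains the term $-\dot g g^{-1}$ which could in principle cancel against $g(X_0+b_0)g^{-1}$ and hide an unbounded $\|\dot g_n\|_{L^2}$ behind a bounded $F(g_n)$. The key observation that rules this out is that the compactness of $G$ together with the use of an $\mathrm{Ad}$-invariant inner product on $\mg$ gives $|g_n(X_i+b_i)g_n^{-1}|=|X_i+b_i|$ pointwise, so the reverse triangle inequality on the $a_0$-component yields $\|\dot g_n g_n^{-1}\|_{L^2}\le\sqrt{2F(g_n)}+\|X_0+b_0\|_{L^2}+\|X_0\|_{L^2}$. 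With this uniform bound in hand, the direct method goes through and the lemma follows.
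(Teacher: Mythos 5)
Your proposal is correct, and its core coincides with the paper's argument: the paper also writes $g=\exp(u)$, introduces the linearized gauge operator $Du=([u,X_0]-\dot u,[u,X_1],\dots,[u,X_7])$ together with its $L^2$-adjoint $D^{\st}a=\dot a_0+\sum_i[X_i,a_i]$, observes that $D^{\st}D$ is invertible under the Dirichlet condition $u(0)=u(1)=0$, and applies the inverse/implicit function theorem to solve $D^{\st}(g(X+b)-X)=0$. That is exactly your small-$b$ branch. Where you genuinely diverge is in the second half: the paper's proof, following Kronheimer, is purely local (it explicitly says ``for $b$ small''), even though the lemma is stated for every $b\in V$; your variational setup --- identifying the slice condition as the Euler--Lagrange equation of $F(g)=\tfrac12\|g(X+b)-X\|_{L^2}^2$ and running the direct method --- closes that gap. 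Your coercivity estimate is the right one: Ad-invariance of the inner product kills the conjugation terms pointwise, so $\|\dot g_n g_n^{-1}\|_{L^2}$ is controlled by $\sqrt{2F(g_n)}+\|X_0+b_0\|_{L^2}+\|X_0\|_{L^2}$, and compactness of $G$ upgrades this to a uniform $W^{1,2}$ bound; weak lower semicontinuity then follows since the only derivative dependence is quadratic in $\dot g g^{-1}$ and the remaining terms converge strongly via $W^{1,2}\hookrightarrow C^0$. The one caveat worth flagging is regularity at the end: since $\MG_0$ is declared to consist of $C^2$ maps while elements of $V$ are merely continuous, bootstrapping the Euler--Lagrange ODE only returns $\dot g g^{-1}=g(X_0+b_0)g^{-1}-X_0-a_0$ with the regularity of $X_0+b_0$, i.e.\ generically $g\in C^1$ with $a_0\in C^1$; this mismatch is inherited from the paper's own conventions rather than a defect of your argument, but you should state the regularity class you actually land in. For the intended application (the slice theorem making $\NO/\MG_0$ a manifold) only the small-$b$ case is used, so your extra work is a strengthening rather than a necessity.
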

			\begin{proof}
				We write $g=\exp(u)$ with $u(0)=u(1)=0$, then we define the operator $$Du=([u,A_0]-\frac{du}{dt},[u,X_1],\cdots,[u,X_7]),$$ which is the linearization of the $\MG_0$ action on $X$. We write $a=(a_0,\cdots,a_7)$, then w.r.t. the $L^2$ inner product, the formal adjoint operator $D^{\st}$ can be written as $D^{\st}a=\frac{da_0}{dt}+\sum_{i=0}^7[X_i,a_i]$. The equation $D^{\st}a=0$ can be written as $D^{\st}(Du+[b,u])=D^{\st}b$. The operator $D^{\st}D$ is invertible with the boundary condition $u(0)=u(1)=0$, then by inverse function theorem, for $b$ small, there exists a unique $g\in\MG_0$ such that $D^{\st}(g(A+b)-A)=0$.  
			\end{proof}
			For $Y_1,Y_2\in T^{\st}V$, the $L^2$ metric is defined to be 
			\begin{equation*}
			g(Y_1,Y_2)=\int_{I}\lan Y_1,Y_2 \ran,
			\end{equation*}
			where $\lan\;,\;\ran$ is an invariant inner product of $\mathfrak{g}$.
			
			\begin{proposition}
				The moduli space of the octonionic Nahm's equations with an $L^2$ metric is a smooth manifold with a complete metric.
			\end{proposition}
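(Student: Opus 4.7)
The plan is to follow the Kronheimer strategy already invoked for the slicing lemma: use the Coulomb slice provided by Lemma \ref{lem_slicing} to put local coordinates on the quotient, then identify the cut-down moduli space with a finite-dimensional space of initial data via the ODE nature of the equations, and finally obtain completeness from an a priori pointwise bound on solutions with bounded $L^2$ energy.

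First I would set up the local manifold structure. Let $p=(X_0,X_1,\ldots,X_7)\in\NO$ and let $D\colon \Omega^0_0(I,\mathfrak{g})\to V$ be the linearised gauge action $Du=([u,X_0]-\tfrac{du}{dt},[u,X_1],\ldots,[u,X_7])$ with the boundary conditions $u(0)=u(1)=0$ from Lemma \ref{lem_slicing}. Its $L^2$-adjoint $D^*$ defines the Coulomb slice $\ker D^*\subset V$. I would combine this with the linearisation $L$ of the octonionic Nahm equations at $p$: sending $(b_0,\ldots,b_7)\in \ker D^*$ to $\frac{d}{dt}b_i+[b_0,X_i]+[X_0,b_i]+\sum_{j,k}f_{ijk}[b_j,X_k]$. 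This is a first-order linear ODE system, so after a gauge fix one can solve the initial value problem and the solution space at each solution is parameterised by the value at $t=0$, i.e.\ by a finite-dimensional space of dimension $8\dim G$. A standard inverse function theorem argument, exactly as in Kronheimer's proof of smoothness, then shows that a neighbourhood of $[p]$ in $\MO$ is a smooth manifold, and the transition between slices around different basepoints is smooth because the gauge transformation provided by the slicing lemma depends smoothly on the data.

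Next, to get a global diffeomorphism, I would use the fact that the equations form a flow of ODEs. Given a gauge-fixed representative in temporal gauge $X_0\equiv 0$, the octonionic Nahm equations become the autonomous ODE $\frac{dX_i}{dt}=-\tfrac{1}{2}\sum_{j,k}f_{ijk}[X_j,X_k]$, and the solution on $[0,1]$ is determined by the initial tuple $(X_1(0),\ldots,X_7(0))\in\mathfrak{g}\otimes\mathbb{R}^7$. After quotienting by $\MG_0$ one also acquires a residual $G$ factor comparing the frames at the two endpoints, which gives the diffeomorphism with an open subset of $G\times(\mathfrak{g}\otimes\mathbb{R}^7)$. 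This openness (and eventually star-shapedness in the stronger theorem) follows because the flow is only defined on $[0,1]$ for initial data small enough to avoid blow-up.

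Finally for completeness, I would argue as follows. Suppose $\{[p_n]\}$ is a Cauchy sequence in the $L^2$ metric on $\MO$. I would pick Coulomb slice representatives $(X_0^{(n)},\ldots,X_7^{(n)})$ and note that the $L^2$ Cauchy condition, together with the ODE $\frac{d}{dt}|\mbX|^2=-2\langle \mbX,\mbX\times\mbX\rangle -2\langle\mbX,[X_0,\mbX]\rangle$ and integration by parts in Coulomb gauge, gives a uniform $C^0$ bound on $\mbX$ on $[0,1]$: the cross-product nonlinearity is antisymmetric and pairs to zero against $\mbX$, so one gets an energy identity that prevents finite-time blow-up and controls $\|\mbX\|_\infty$ by $\|\mbX\|_{L^2}$. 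The ODE then bootstraps this to uniform $C^k$ bounds, Arzel\`a--Ascoli yields a subsequential limit, and uniqueness of the limiting gauge slice shows that the limit lies in $\MO$, so the Cauchy sequence converges. The main obstacle is this last step: deriving the pointwise bound from the $L^2$ metric on the quotient requires controlling the Coulomb slice representative by the intrinsic distance, which is delicate because the gauge-fix in Lemma \ref{lem_slicing} is only constructed locally; I would handle this by concatenating slice charts along the Cauchy sequence and using the uniform ODE estimate to ensure the charts stay in a compact region of $V$.
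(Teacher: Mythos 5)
Your treatment of the smoothness statement follows the paper's route (implicit function theorem for $\NO$ as a Banach submanifold, then the Coulomb slice of Lemma \ref{lem_slicing} for the quotient), and your final appeal to Arzel\`a--Ascoli is also how the paper closes the completeness argument. The gap is in the key analytic step of the completeness part: the claimed energy identity. You assert that $\lan \mbX,\mbX\times\mbX\ran$ vanishes because ``the cross-product nonlinearity is antisymmetric and pairs to zero against $\mbX$.'' This is false. For a single imaginary octonion $a$ one does have $a\times a=0$, but here the product is the combination of the cross product on $\Im(\BO)$ with the Lie bracket on $\mg$: $(\mbX\times\mbX)_i=\tfrac12\sum_{j,k}f_{ijk}[X_j,X_k]$, which is not zero because the antisymmetry of $f_{ijk}$ in $(j,k)$ is compensated by that of the bracket. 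The pairing $\lan\mbX,\mbX\times\mbX\ran=\tfrac12\sum f_{ijk}\lan X_i,[X_j,X_k]\ran$ is the full contraction of two totally antisymmetric $3$-tensors and is generically nonzero. The paper's own Example \ref{ex_nonsmooth} is an explicit counterexample: for $X_i=\tfrac{1}{t-1}\sigma_i$ ($i=1,2,3$) one has $|\mbX(t)|^2=3|\sigma_1|^2(t-1)^{-2}$, which is far from conserved and in fact blows up. Consequently there is no bound of the form $\|\mbX\|_\infty\lesssim\|\mbX\|_{L^2}$ obtained by ``integrating a conservation law,'' and the step from the $L^2$ Cauchy condition to a uniform $C^0$ bound, as you wrote it, collapses.

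The conclusion you want is still true, but it has to be reached differently. In the temporal gauge $X_0\equiv 0$ the equation gives the differential inequality $\bigl|\tfrac{d}{dt}|\mbX|\bigr|\leq C|\mbX|^2$, so if $|\mbX(t_1)|=M$ then $|\mbX(t)|\geq M/(1+CM|t-t_1|)$ on a neighbourhood of $t_1$; integrating this lower bound shows $\int_0^1|\mbX|^2\,dt\gtrsim M$ for $M$ large. In other words, a solution that is pointwise large somewhere necessarily has large $L^2$ norm, which is the correct (and reversed) way to extract a uniform $C^0$ bound from an $L^2$ bound; the ODE then bootstraps to $C^k$ bounds and Arzel\`a--Ascoli applies as in the paper. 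Separately, your middle paragraph on the diffeomorphism with an open star-shaped subset of $G\times(\mg\otimes\mathbb{R}^7)$ is not needed for this proposition (it is the content of the subsequent result on the map $\chi$), so it can be dropped here.
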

			\begin{proof}
				We will first shows that $\NO$ is a smooth Banach submanifold of $V$. The left-hand side of \eqref{Eq_octonionNahm} define a smooth map $\mu:V\to \Gamma$, where $\Gamma$ is the space of $C^0$ maps $I\to \mg\otimes \mathbb{R}^7$. 
				
				We consider the linearization $d\mu$ at each $(X_0,X_1,\cdots,X_7)$, for every $(\gamma_1,\cdots,\gamma_7)\in V$, we would like find $(a_0,a_1,\cdots,a_7)$ to solve the following equations for $i=1,\cdots,7$:
				\begin{equation}
				\begin{split}
				\frac{da_i}{dt}+[a_0,X_i]+[X_0,a_i]+\frac{f_{ijk}}{2}([X_j,a_k]+[a_j,X_k])=\gamma_i.
				\end{split}
				\end{equation}
				These equations have a unique solution satisfies $a_0
				\equiv 0$ and $a_i(0)=0$ for $i=1,\cdots,7$. Therefore, $d\mu$ has a right inverse, by implicit function theorem, $\NO$ is a smooth Banach submanifold.
				
				In addition, by the slicing Lemma \ref{lem_slicing}, we conclude that $\NO/\MG_0$ is a smooth manifold. In addition, the tangent space at a point $(X_0,X_1,\cdots,X_7)$ can be identified with the following equations
				\begin{equation}
				\begin{split}
				&\frac{da_0}{dt}+\sum_{i=0}^7[X_i,a_i]=0,\\
				&\frac{da_i}{dt}+[a_0,X_i]+[X_0,a_i]+\frac{f_{ijk}}{2}([X_j,a_k]+[a_j,X_k])=0,
				\end{split}
				\end{equation}
				which are linear ODE systems with $8\dim \mg$ dimensional family of solutions. 
				
				For any sequence of smooth solutions to the octonionic Nahm's equations \eqref{Eq_octonionNahm} with bounded $L^2$ metric, we can always assume $X_0=0$, then \eqref{Eq_octonionNahm} will control all the $C^k$ norms of these solutions. The completeness of the $L^2$ metric follows directly by Arzela-Ascoli theorem. 
			\end{proof}
			
			For each solution $(X_0,X=(X_1,\cdots,X_7))$ to \eqref{Eq_octonionNahm}, we can pick up an unique gauge transform $g\in \MG$ such that $X_0=-g^{-1}\frac{dg}{dt}$ with $g(0)=1$, then the octonionic Nahm's equations become the following reduced version 
			\begin{equation}
			\frac{dX}{dt}+X\times X=0.
			\label{eq_decoupledNahm}
			\end{equation}
			
			By the uniqueness of solutions to ODE with given initial value, the map $$(X_0,X)\to (g(1),X_1(0),\cdots,X_7(0)),$$ with $g$ satifies $g(0)=1$ and 
			$X_0=g^{-1}\frac{dg}{dt}$ is well-defined. In addition, it is straight forward to check that the above map is invariant under the $\MG_0$ orbit of $(X_0,X)$. Therefore, we obtain a map
			\begin{equation}
			\begin{split}
			\chi&:\MO\to G\times (\mg \otimes \mathbb{R}^7),\\
			\chi&[(X_0,X)]\to (g(1),X_1(0),\cdots,X_7(0)).
			\end{split}
			\end{equation}
			
			As the reduced equation is an ODE with smooth coefficients, given any initial value $(X_1(0),\cdots,X_7(0))$, we suppose to find an unique solution near zero. However, the initial value doesn't guarantee that the solution exists over $[0,1]$, instead, the solutions can blow-up. We have the following example
			\begin{example}
				\label{ex_nonsmooth}
				Let $G=SU(2)$, let $\sigma_1,\sigma_2,\sigma_3\in\mg$ satisfies the cyclic conditions $[\sigma_1,\sigma_2]=\sigma_3$, $[\sigma_2,\sigma_3]=\sigma_1$ and $[\sigma_3,\sigma_1]=\sigma_2$. The $(0,\frac{1}{t-1}\sigma_1,\frac{1}{t-1}\sigma_2,\frac{1}{t-1}\sigma_3,0,0,0,0)$ is a solution to the octonionic Nahm's equations, which will blows up at $t=1$.  
			\end{example}
			The following proposition characterize basic properties of $\chi$, which generalized the result of Dancer and Swann \cite{dancer1996hyperkahler} for the quaternion Nahm's equations.
			\begin{proposition}
				$\chi$ satisfies the following properties:
				\begin{itemize}
					\item [i)]$\chi$ is a diffeomorphism of $\MO$ into an open set $W$ in $G\times (\mg\otimes \mathbb{R}^7)$ containing $G\times (0,\cdots, 0)$,
					\item [ii)] $\chi$ will not be surjective unless $G$ is abelian,
					\item [iii)] the image of $\chi$ is star shaped.
				\end{itemize}
			\end{proposition}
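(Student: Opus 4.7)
The plan is to treat (i) as the technical core and to derive (ii) and (iii) from the explicit inverse construction and a scaling symmetry. For (i), I first verify that $\chi$ descends to $\MO$: the axial-gauging map $g:[0,1]\to G$ with $g(0)=1$ is uniquely determined by $X_0$ as the solution of a linear ODE, and when $(X_0,X)$ is replaced by $h\cdot(X_0,X)$ for $h\in\MG_0$, a short computation shows the new axial-gauging map is $gh^{-1}$. Since $h(0)=h(1)=1$, this coincides with $g$ at both endpoints, and the transformed zero-time $X$-value is $h(0)X(0)h(0)^{-1}=X(0)$, so $\chi$ is well defined. For injectivity, two representatives with the same $\chi$-image produce, after axial gauging, solutions of the reduced equation \eqref{eq_decoupledNahm} with identical initial data; ODE uniqueness forces these reduced solutions to agree, and then the ratio of the two axial-gauging maps lies in $\MG_0$ and conjugates the two representatives.

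Smoothness of $\chi$ is immediate from smooth dependence of linear ODEs on parameters. For the inverse I argue constructively: given $(k,v)$ in the image, pick any smooth path $g:[0,1]\to G$ with $g(0)=1$, $g(1)=k$ (using connectedness of $G$), solve \eqref{eq_decoupledNahm} from initial data $v$ to obtain $\tilde{X}$ on $[0,1]$, and set $X_0=g^{-1}dg/dt$, $X_i=g^{-1}\tilde{X}_i g$; two choices of $g$ differ by an element of $\MG_0$, so the gauge class is well defined, and smoothness follows from smooth dependence of ODEs on parameters and initial data. In fact the image equals $G\times W_0$, where
$$W_0:=\{\,v\in\mg\otimes\mathbb{R}^7 : \text{the solution of \eqref{eq_decoupledNahm} with } X(0)=v \text{ extends to }[0,1]\,\}.$$
Openness of $W_0$ is standard ODE extension theory (continuous dependence together with openness of the domain of existence), and $0\in W_0$ via the trivial solution, so $G\times\{0\}\subset\chi(\MO)$. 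Together with the product description of the image, this completes (i).

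For (ii), when $G$ is not abelian its Lie algebra $\mg$ contains an $\su(2)$-triple (from the structure theory of compact Lie algebras), and Example \ref{ex_nonsmooth} then furnishes an initial datum $v\notin W_0$, so no $(k,v)$ lies in the image. For (iii), the reduced equation is invariant under the rescaling $Y(s):=\tau X(\tau s)$ for $\tau\in(0,1]$: direct substitution yields $\tfrac{dY}{ds}+Y\times Y=\tau^2\bigl(X'(\tau s)+(X\times X)(\tau s)\bigr)=0$, and $Y$ is defined on all of $[0,1]$ because $[0,\tau]\subset[0,1]$, with $Y(0)=\tau v$. Hence $W_0$ is star-shaped about $0$, and via the product structure $\chi(\MO)=G\times W_0$ this yields the claimed star-shape of the image. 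I expect the main obstacle to be the careful bookkeeping in the first paragraph that $\chi$ descends to the gauge quotient and is a bijection onto its image; the analytic ingredients (ODE existence, smooth dependence, openness of the extension domain, scaling) are routine.
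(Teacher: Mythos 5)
Your proof follows essentially the same route as the paper's: gauge-fixing to the reduced ODE, ODE uniqueness for injectivity, openness of the set of initial data with solutions extending to $[0,1]$ for the image, Example \ref{ex_nonsmooth} for (ii), and the rescaling $\tau X(\tau s)$ for (iii). The only (inessential) differences are that you make the product structure $G\times W_0$ of the image and the well-definedness check on the gauge quotient explicit, and that you omit the converse observation (included in the paper, though not strictly required by the statement) that $\chi$ is surjective when $G$ is abelian because all reduced solutions are then constant.
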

			\begin{proof}
				For i), we first prove $\chi$ is injective. Suppose $\chi(X_0,X)=\chi(Y_0,Y)$, then there exists $g$ and $g'$ such that for $i=1,\cdots,7$, $gX_i(0)g^{-1}=g'Y_i(0)(g')^{-1}$ with $g(0)=g'(0)=1$ and $g(1)=g'(1)$. As $gX_ig^{-1}$ and $g'Y_i(g')^{-1}$ both satisfy the ODE with the same initial value, we obtain $gX_ig^{-1}=g'Y_i(g')^{-1}$ over $[0,1]$. We set $g'':=g^{-1}g'$, then $g''\in\MG_0$ and $g''Y_i(g'')^{-1}=X_i$, which implies $\chi$ is injective. 
				
				If $(\xi_1,\cdots,\xi_7)$ is close to the origin in $\mg \otimes \mathbb{R}^7$, then we can find solutions $(X_1,\cdots,X_7)$ to the reduced Nahm's equations \eqref{eq_decoupledNahm} with $X_i(0)=\xi_i$ for $i=1,\cdots,7$ and $X_i$ doesn't blows up over $[0,1]$. For every $l\in G$, we find $g\in \MG$ with $g(0)=1$ and $g(1)=l$, then we have $$\chi(-\frac{dg}{dt}g^{-1},gX_ig^{-1})=(g(1),g^{-1}X_1g|_{t=0},\cdots,g^{-1}X_7g|_{t=0})=(l,\xi_1,\cdots,\xi_7).$$ 
				
				Therefore, the image of $\chi$ contains a neighborhood of $G\times (0,\cdots,0)$. In addition, as the condition that the reduced octonionic Nahm's equations have a smooth solution over $[0,1]$ with initial value $(\xi_1,\cdots,\xi_7)$ is an open condition, we obtain that $W$ is open. $\chi$ is obviously a smooth map, we finished i). 
				
				For ii), when $G$ is non-abelian, $G$ contains a copy of $\mathfrak{su}(2)$. By Example \ref{ex_nonsmooth}, we obtain that $\chi$ is not surjective. When $G$ is abelian, all solutions to \eqref{eq_decoupledNahm} are constants in $\mg\otimes \mathbb{R}^7$, so $\chi$ is surjective. 
				
				For iii), let $X(t)=(X_1(t),\cdots,X_7(t))$ be a smooth solution to the reduced octonionic Nahm's equations \eqref{eq_decoupledNahm} over $[0,1]$ with initial value $(X_1(0),\cdots,X_7(0))=(\xi_1,\cdots,\xi_7)$. For any $0<\ep\leq 1$, $(\ep X_1(\ep t), \ep X_2(\ep t),\cdots, \ep X_7(\ep t))$ is also a solution to the reduced octonionic Nahm's equations \eqref{eq_decoupledNahm} which is smooth over $[0,\ep^{-1}]$ with initial value $(\ep\xi_1,\cdots, \ep\xi_7)$. Therefore, the image of $\chi$ is star shaped. 
			\end{proof}
		\end{subsection}
	\end{section}
	
	\begin{section}{octonionic Nahm's equations as zero sets of moment maps}
		In this section, we will introduce the moment map description of the octonionic Nahm's equations, which first appears in \cite{grabowski1993octonion}.
		\begin{subsection}{Complex Structures generated by octonion multiplications}
			As a normed vector space, we identified $\mathbb{R}^8$ with $\BO$. We choose a orthogonal base $e_0,e_1,\cdots,e_7$ of $\mathbb{R}^8$ and regard $e_0$ is the identity element in $\BO$ and $e_1,\cdots,e_7\in \Im(\BO)$. To simplify the notation, we choose a trivialization of $T^{\st}\mathbb{R}^8$ and identified each fiber of $T^{\st}\mathbb{R}^8$ with $\mathbb{R}^8$.
			
			For each $x\in \mathbb{R}^8$, we can write $x=\sum_{i=0}^7x_ie_i$, we can study the complex structure generated by octonions. Even the multiplication of octonions are non-associtive, for any $v\in\Im(\BO)$ with $v^2=-1$, we have $v(vx)=(vv)x=-x$. Therefore, for $i=1,\cdots,7$, the multiplication of $e_i$ will generates complex structure $I_i$ over $\mathbb{R}^8$. 
			
			For $V=\{I\to\mathbb{R}^8\otimes \mathfrak{g}\},$ the complex structures $I_1,\cdots,I_7$ over $\mathbb{R}^8$ will also generates complex structures over $V$. Given $Y=(Y_0,\cdots,Y_7)\in T^{\st}V$, we can explicitly write down these complex structures as
			\begin{equation}
			\begin{split}
			I_1(Y)&=(-Y_1,Y_0,-Y_3,Y_2,-Y_5,Y_4,-Y_7,Y_6),\\
			I_2(Y)&=(-Y_2,Y_3,Y_0,-Y_1,-Y_6,Y_7,Y_4,-Y_5),\\
			I_3(Y)&=(-Y_3,-Y_2,Y_1,Y_0,Y_7,Y_6,-Y_5,-Y_4),\\
			I_4(Y)&=(-Y_4,Y_5,Y_6,-Y_7,Y_0,-Y_1,-Y_2,Y_3),\\
			I_5(Y)&=(-Y_5,-Y_4,-Y_7,-Y_6,Y_1,Y_0,Y_3,Y_2),\\
			I_6(Y)&=(-Y_6,Y_7,-Y_4,Y_5,Y_2,-Y_3,Y_0,-Y_1),\\
			I_7(Y)&=(-Y_7,-Y_6,Y_5,Y_4,-Y_3,-Y_2,Y_1,Y_0).
			\end{split}
			\end{equation}
			
			Note that the octonions are non-associative, but the complex structures over tangent space are associative. The algebra generated by $I_1,\cdots,I_7$ will be different from the octonion algebra. We can explicitly compute the relationships between these complex structures. 
			
			For each index $(ijk)$ such that the construct constant $f_{ijk}\neq 0$, we define a map 
			\begin{equation*}
			\begin{split}
			\iota_{0ijk}&:T^{\st}V\to T^{\st}V\\
			\iota_{0ijk}&(Y_0,\cdots,Y_i,\cdots,Y_j,\cdots,Y_k,\cdots,Y_7)=(-Y_0,\cdots,-Y_i,\cdots,-Y_j,\cdots,-Y_k,\cdots,Y_7), 
			\end{split}
			\end{equation*}
			which changes the sign in front of $Y_0,Y_i,Y_j,Y_k$.
			
			\begin{lemma}
				These complex structures satisfies the following relationships:
				\begin{itemize}
					\item [i)]For any $i\neq j$, $I_i\circ I_j=-I_j\circ I_i$.
					\item [ii)]For any $i\neq j\neq k$, $f_{ijk}I_i\circ I_j\circ I_k=\iota_{0ijk}$. To be more explicitly, we have 
					\begin{equation*}
					\begin{split}
					&I_1 \circ I_2 \circ I_3=\iota_{0123},\;I_1 \circ I_4 \circ I_5=\iota_{0145},\;I_1 \circ I_6 \circ I_7=\iota_{0167},\\
					&I_2 \circ I_4 \circ I_6=\iota_{0246},-I_2 \circ I_5 \circ I_7=\iota_{0257},\;-I_3 \circ I_4 \circ I_7=\iota_{0347}.\;-I_3 \circ I_5 \circ I_6=\iota_{0356}.
					\end{split}
					\end{equation*}
				\end{itemize}
			\end{lemma}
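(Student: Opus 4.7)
The plan is a pointwise verification on $\mathbb{R}^8 \cong \BO$, organized so that the non-associativity of octonions is tamed by the alternative property. For part (i), each $I_i$ is left-multiplication by $e_i$, so $I_iI_jY = e_i(e_jY)$. Polarizing the left-alternative identity $u(uy) = u^2 y$ by the substitution $u \mapsto u + v$ gives $u(vy) + v(uy) = (uv + vu)y$. For $u,v \in \Im(\BO)$ one has $uv + vu = -2\langle u, v\rangle$, so taking $u = e_i$, $v = e_j$ with $i \neq j$ yields $I_iI_j + I_jI_i = 0$ pointwise on $V$, which is (i). Equivalently, the anticommutation can be read directly off the explicit $8 \times 8$ matrices for $I_1,\dots,I_7$ already displayed above.

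For part (ii), first observe that whenever $I_i, I_j, I_k$ pairwise anticommute, (i) forces $(I_iI_jI_k)^2 = -I_i^2I_j^2I_k^2 = \Id$ after moving each factor past the others. Hence each triple product $I_iI_jI_k$ is an involution, and one only needs to identify its $\pm 1$-eigenspaces with those of $\iota_{0ijk}$. The seven index triples with $f_{ijk}\neq 0$ are precisely the lines of the Fano plane, and for each of them $\{e_0, e_i, e_j, e_k\}$ spans a quaternion subalgebra of $\BO$, hence an associative triple. I would verify the representative case $(ijk) = (1,2,3)$ by directly composing the three matrices: successively applying $I_3$, $I_2$, $I_1$ to $Y = (Y_0,\dots,Y_7)$ returns $(-Y_0,-Y_1,-Y_2,-Y_3, Y_4,Y_5,Y_6,Y_7) = \iota_{0123}(Y)$, consistent with $f_{123} = 1$. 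The remaining six cases are identical pointwise computations; the signs of the monomials in $\phi$ produce exactly the minus signs appearing in the displayed list $-I_2I_5I_7 = \iota_{0257}$, $-I_3I_4I_7 = \iota_{0347}$, $-I_3I_5I_6 = \iota_{0356}$.

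The main obstacle is bookkeeping rather than mathematics: although (i) together with the alternative property already forces $I_iI_jI_k$ to be a $\pm \iota$-type involution on each associative triple, one still has to track three independent sign sources simultaneously: associator corrections coming from the non-associativity of $\BO$, the anticommutations from (i), and the signed structure constants $f_{ijk}$. Packaging all three into the single uniform identity $f_{ijk}I_iI_jI_k = \iota_{0ijk}$ is really the content of (ii), and is precisely why the author chooses to list all seven triples explicitly rather than attempt a closed-form proof that avoids case checking.
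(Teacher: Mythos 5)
Your proposal is correct and matches the paper's approach, which is simply the direct computation you carry out (the paper's proof reads, in full, ``This is proved by straight forward computation''); your representative check of $I_1\circ I_2\circ I_3=\iota_{0123}$ agrees with the explicit matrices. The only added value in your write-up is the organizational shortcut for (i) via the polarized alternative identity $u(vy)+v(uy)=(uv+vu)y$ and the observation that (i) forces each triple product to be an involution, which reduces but does not eliminate the case checking.
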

			\begin{proof}
				This is proved by straight forward computation.
			\end{proof}
			The following corollary is well-known
			\begin{corollary}
				$I_1,\cdots,I_7$ generate the $7^{\mathrm{th}}$ Clifford algebra in $\mathrm{End}(T^{\st}V)$.
			\end{corollary}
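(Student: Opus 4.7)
The plan is to invoke the universal property of the real Clifford algebra $\mathrm{Cl}_7 = \mathrm{Cl}(\BR^7, -|\cdot|^2)$, which is the unital associative $\BR$-algebra generated by symbols $\varepsilon_1, \ldots, \varepsilon_7$ subject to the relations $\varepsilon_i \varepsilon_j + \varepsilon_j \varepsilon_i = -2\delta_{ij}\,\mathbf{1}$. It therefore suffices to verify that the seven operators $I_1, \ldots, I_7 \in \mathrm{End}(T^{\st}V)$ satisfy these relations; by the universal property one then obtains a unique unital algebra homomorphism $\rho : \mathrm{Cl}_7 \to \mathrm{End}(T^{\st}V)$ sending $\varepsilon_i \mapsto I_i$, which is the sense in which the $I_i$ generate a copy of the Clifford algebra.

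First I would check the squared relations $I_i \circ I_i = -\mathrm{Id}$. The operator $I_i$ is induced pointwise from left multiplication by the unit imaginary octonion $e_i \in \Im(\BO)$ on the $\BR^8 \cong \BO$ factor, tensored with the identity on $\mg$. The identity $I_i^2 = -\mathrm{Id}$ thus reduces to $e_i(e_i x) = -x$ for all $x \in \BO$, which was already noted in the paragraph introducing the $I_i$ and follows from alternativity of $\BO$ together with $e_i^2 = -1$.

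Next I would check the anticommutation relations $I_i \circ I_j + I_j \circ I_i = 0$ for $i \neq j$. This is exactly part (i) of the preceding lemma, verified there by direct computation from the explicit coordinate formulas for the $I_k$. Together with the squared relations this gives the full Clifford relations $I_i I_j + I_j I_i = -2\delta_{ij}\,\mathrm{Id}$, so the universal property produces the homomorphism $\rho$ and the corollary follows.

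There is no real technical obstacle; the computations have all been done in the previous lemma. The one point I would flag is interpretational: since a single fiber $\BR^8 \otimes \mg$ has real dimension far below $\dim_\BR \mathrm{Cl}_7 = 128 = \dim_\BR(M_8(\BR) \oplus M_8(\BR))$, the map $\rho$ cannot be injective, and the central volume element $I_1 \circ \cdots \circ I_7$ acts as $\pm\mathrm{Id}$. Thus ``generate the $7^{\mathrm{th}}$ Clifford algebra'' is meant in the representation-theoretic sense --- the image is one of the two $M_8(\BR)$ simple summands, namely the real spinor representation coming from the Cayley--Dickson construction --- rather than as a faithful embedding of the full $128$-dimensional algebra.
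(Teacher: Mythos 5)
Your proof is correct and follows the route the paper intends: the corollary is stated without proof as ``well-known,'' immediately after the lemma establishing the anticommutation relations $I_i\circ I_j=-I_j\circ I_i$, and the squared relation $I_i^2=-\mathrm{Id}$ is noted in the paragraph introducing the $I_i$ (via $e_i(e_ix)=-x$), so verifying the Clifford relations and invoking the universal property is exactly the intended argument. Your closing caveat --- that the resulting homomorphism $\mathrm{Cl}_7\to\mathrm{End}(T^{\st}V)$ cannot be faithful, since $\dim_{\mathbb{R}}\mathrm{Cl}_7=128$ while the action is pointwise on $\mathbb{R}^8\otimes\mathfrak{g}$ and the central volume element acts as $\pm\mathrm{Id}$, so the statement must be read representation-theoretically --- is a correct and worthwhile clarification that the paper does not make.
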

			
			We can also write down the symplectic structures corresponding to these complex structures. Recall that for $Y_1,Y_2\in T^{\st}V$, the $L^2$ metric will defined to be 
			\begin{equation*}
			g(Y_1,Y_2)=\int_{I}\lan Y_1,Y_2 \ran,
			\end{equation*}
			where $\lan\;,\;\ran$ is a metric of $\mathfrak{g}$.
			
			For $i=1,\cdots,7$, we denote $\al_i:=e^{0i}+\iota_{e_i}\phi$, which we explicitly write as
			\begin{equation}
			\begin{split}
			&\al_1=e^{01}+e^{23}+e^{45}+e^{67},\;
			\al_2=e^{02}-e^{13}+e^{46}-e^{57},\;
			\al_3=e^{03}+e^{12}-e^{47}-e^{56},\\
			&\al_4=e^{04}+e^{51}+e^{62}-e^{73},\;
			\al_5=e^{05}+e^{14}-e^{72}-e^{63},\;
			\al_6=e^{06}+e^{71}+e^{24}-e^{35},\\
			&\al_7=e^{07}+e^{16}-e^{25}-e^{34}.
			\end{split}
			\end{equation}
			
			Note that the forms $\iota_{e_i}\phi$ will be a base of $\Lambda^2_7(\mathbb{R}^7)$, where $\mathbb{R}^7$ is the $7$ dimensional plane span by $e_1,\cdots,e_7$. We can define $7$ symplectic forms $\omega_i$
			\begin{equation}
			\omega_i(Y_1,Y_2):=\int_I \al_i(Y_1,Y_2),
			\label{eq_symplecticstructures}
			\end{equation}
			and it is straight forward to check that $\omega_i(Y_1,Y_2)=g(I_iY_1,Y_2)$.

		\end{subsection}
		
		\begin{subsection}{octonionic Nahm's equations as a moment map}
			For any $X=(X_0,\cdots,X_7)\in V$, recall the gauge group $g\in\MG_0$ acts on $X$ by $$X_0\to -\frac{dg}{dt}g^{-1}+gX_0g^{-1},\;X_i\to gX_ig^{-1},\;for\;i\neq 0.$$ Then the $\MG_0$ will be Hamiltonion for every symplectic structure $\omega_i$ defined in \eqref{eq_symplecticstructures}. We write $\xi:I\to\mg$ be an element in the Lie algebra of $\MG_0$. We denote $$X^{\xi}:=\frac{d}{dt}|_{t=0}\exp(t\xi) X=(-\frac{d}{dt}\xi,-[X_1,\xi],\cdots,-[X_7,\xi]).$$
			
			We denote $\mu_i$ be the moment map to the symplectic structure $\omega_i$. For $Y\in T_X^{\st}V$, we denote $(I_iY)_k$ be the value of the $k$-th coordinate of $I_iY$, then we compute 
			\begin{equation}
			\begin{split}
			d\lan \mu,\xi\ran(Y)&=-\omega_i(X_{\xi},Y)=-g(X_{\xi},I_iY)\\
			&=\int_0^1\lan\frac{d}{dt}\xi+[X_0,\xi],(I_iY)_0\ran+\sum_{k=1}^7\lan[X_k,\xi],(I_iY)_k\ran\\
			&=\int_{0}^1\lan \xi,(-\frac{d}{dt}(I_iY)_0-[X_0,(I_iY)_0]-\sum_{k=1}^7[X_k,(I_iY)_k]).
			\end{split}
			\end{equation}
			In addition, $(I_iY)_0=-Y_i$, $(I_iY)_k=f_{ijk}Y_j$ for $k\neq i$ and $(I_iY)_i=Y_0$, we obtain that 
			\begin{equation}
			\begin{split}
			d\mu_i(Y)&=\frac{d}{dt}Y_k-[X,I_kY]\\
			&=\frac{d}{dt}Y_i+[X_0,Y_i]+[Y_0,X_i]+\frac{1}{2}\sum_{j,k=1}^7f_{ijk}([X_j,Y_k]+[Y_j,X_k]).
			\label{eq_tangentspaceequationsofNO}
			\end{split}
			\end{equation}
			Therefore, the moment map will be
			\begin{equation}
			\mu_i(X)=\frac{d}{dt}X_i+[X_0,X_i]+\frac{1}{2}\sum_{j,k=1}^7f_{ijk}[X_j,X_k],
			\end{equation}
			which is the $i$-th equations in \eqref{Eq_octonionNahm}. 
			Therefore, we obtain 
			
			\begin{proposition}
				$\NO=\cap_{i=1}^7\mu_i^{-1}(0)$ and $\MO=\cap_{i=1}^7\mu_i^{-1}(0)/\MG_0$.
			\end{proposition}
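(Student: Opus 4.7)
The proposition is essentially a direct consequence of the moment map computation carried out immediately above the statement, so my plan is to package that computation as the proof rather than introduce new machinery. First, I would observe that the expression
\[
\mu_i(X)=\frac{d}{dt}X_i+[X_0,X_i]+\frac{1}{2}\sum_{j,k=1}^{7}f_{ijk}[X_j,X_k]
\]
is, by definition of the octonionic structure constants $f_{ijk}$, precisely the left-hand side of the $i$-th equation of \eqref{Eq_octonionNahm}. Therefore the condition $\mu_i(X)=0$ for all $i=1,\dots,7$ is equivalent to $X=(X_0,X_1,\dots,X_7)\in\NO$, which gives the first identity $\NO=\cap_{i=1}^{7}\mu_i^{-1}(0)$.

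For the second identity, I would just recall the definition $\MO:=\NO/\MG_0$ from the previous section and note that the $\MG_0$ action preserves each level set $\mu_i^{-1}(0)$, since the symplectic forms $\omega_i$ and the moment map equations were constructed from $\MG_0$-invariant data. Thus the quotient of $\cap_{i=1}^{7}\mu_i^{-1}(0)$ by $\MG_0$ coincides with $\NO/\MG_0=\MO$.

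The only point that deserves a brief sanity check is that the formal adjoint computation used to derive $\mu_i$ is valid, i.e.\ that the integration by parts in $t$ produces no boundary terms. This is fine because elements $\xi$ of the Lie algebra of $\MG_0$ satisfy $\xi(0)=\xi(1)=0$, so the boundary contribution $\langle \xi,(I_iY)_0\rangle\bigr|_0^1$ vanishes. With this remark, no further argument is required, and the two equalities in the proposition follow at once.
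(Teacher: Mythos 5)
Your proposal is correct and matches the paper's treatment: the paper states the proposition as an immediate consequence of the moment map computation $\mu_i(X)=\frac{d}{dt}X_i+[X_0,X_i]+\frac{1}{2}\sum_{j,k}f_{ijk}[X_j,X_k]$ together with the definition $\MO=\NO/\MG_0$, which is exactly what you do. Your added remark that the integration by parts produces no boundary terms because $\xi(0)=\xi(1)=0$ for $\xi$ in the Lie algebra of $\MG_0$ is a harmless and correct elaboration of a point the paper leaves implicit.
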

			
			It will be interesting to know that what kind of structure will the moduli space holds, but at least the natural complex structure comes from the moment maps can not be reduced to the moduli space.
			
			The tangent space equations of $\MO$ at $X$ can be written as $\frac{d}{dt}Y_k-[X,I_kY]=0$, where $[X,I_kY]:=\sum_{i=0}^7[X_i,(I_kY)_i]$. 
			We obtain $d\mu_{I_i}(I_jY)=-\frac{d}{dt}Y_k+[X,\iota_{0ijk}I_kY]$. Therefore, for any $Y\in T^{\st}_X\MO$ , it must satisfies 
			$[X,\iota_{0ijk}I_k+I_kY]=0.$ for any $(ijk)$ with $f_{ijk}\neq 0$.

			\begin{proposition}
				Suppose $G$ is non-abelian, then for any $i=1,\cdots,7$, $I_i$ doesn't preserve the tangent space of $\NO$ and can not be reduced to an almost complex structure of $\MO$.
			\end{proposition}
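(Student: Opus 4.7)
The plan is to exhibit, for the complex structure $I_1$, a point $X \in \NO$ and a tangent vector $Y \in T_X \NO$ whose image $I_1 Y$ lies outside $T_X \NO + T_X(\MG_0 \cdot X)$; the cases $I_2, \ldots, I_7$ then follow from the $G_2$-equivariance of $\NO$ together with transitivity of $G_2$ on unit imaginary octonions (the $G_2$-action commutes with the gauge action and intertwines the $I_i$'s with one another). The non-abelian hypothesis enters at the very beginning: I would pick $A, B \in \mg$ with $[A, B] \neq 0$.

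The next step is to set $X = (0, A, 0, \ldots, 0)$ viewed as a constant function on $[0,1]$, and $Y = (0, B, 0, \ldots, 0)$ also constant. Since only one of the $X_i$ with $i \geq 1$ is nonzero, the cross-product term $\mbX \times \mbX$ vanishes and $X \in \NO$. A straightforward check of the linearized equations \eqref{eq_tangentspaceequationsofNO} at $X$ shows every component-equation reduces to $0=0$ except the $i=1$ equation $\dot Y_1 + [Y_0, X_1] = 0$, which is trivially satisfied by $Y$. Thus $Y \in T_X \NO$.

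I would then compute $I_1 Y = (-B, 0, \ldots, 0)$ from the explicit formula for $I_1$, and evaluate the $i=1$ linearized equation on $I_1 Y$: the defect is $[(I_1 Y)_0, X_1] = [-B, A] = -[B, A] \neq 0$. This already yields the first half of the statement, that $I_1$ does not preserve $T_X \NO$.

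The main obstacle is to upgrade this to the failure of descent to $\MO$, that is, to rule out that some element of the gauge orbit direction $T_X(\MG_0 \cdot X)$ could absorb the defect. Such an element has the explicit form $T_\xi = (-\dot \xi, [\xi, A], 0, \ldots, 0)$ for $\xi : [0,1] \to \mg$ with $\xi(0) = \xi(1) = 0$. A direct expansion of the $i=1$ linearized equation on $I_1 Y + T_\xi$ gives $\frac{d}{dt}[\xi, A] + [-B - \dot\xi, A] = [\dot\xi, A] - [B, A] - [\dot\xi, A] = -[B, A]$, independent of $\xi$, so the obstruction persists for every gauge correction. Consequently $I_1 Y \notin T_X \NO + T_X(\MG_0 \cdot X)$, showing that $I_1$ does not induce a well-defined endomorphism of $T_{[X]} \MO$, and by the $G_2$ symmetry the same conclusion holds for each $I_i$.
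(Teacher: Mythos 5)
Your proof is correct, and at its core it uses the same counterexample as the paper: a constant solution supported in a single $\mg$-slot, with non-abelianness supplying a nonvanishing bracket $[B,A]$ as the obstruction. The differences are in the packaging. The paper first derives the general identity $d\mu_k(I_jY)=-\frac{d}{dt}Y_k+[X,\iota_{0ijk}I_kY]$, deduces that invariance of $T_X\NO$ under the complex structures would force $[X,(\iota_{0ijk}\circ I_k+I_k)Y]=0$ for every associative triple $(ijk)$, and then violates that condition on a one-slot constant solution; you skip this machinery and evaluate the linearized operator directly on $I_1Y=(-B,0,\dots,0)$, which is more elementary and equally conclusive. You then reduce the remaining $I_i$ to $I_1$ via $G_2$-equivariance, which is valid since $G_2=\mathrm{Aut}(\BO)$ fixes $e_0$, intertwines left multiplications, preserves $\NO$, and commutes with $\MG_0$ — though the same two-line computation works verbatim for each $i$, so the reduction is optional. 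Finally, your explicit check that no gauge direction $T_\xi$ can absorb the defect addresses the descent-to-$\MO$ step more carefully than the paper, which dispatches it in one sentence; note, however, that since $\MG_0\cdot X\subseteq\NO$, one has $T_X(\MG_0\cdot X)\subseteq T_X\NO$, hence $T_X\NO+T_X(\MG_0\cdot X)=T_X\NO$, so this last computation is logically automatic once $I_1Y\notin T_X\NO$ — it serves as a useful consistency check (confirming $d\mu_1(T_\xi)=0$) rather than an additional necessity.
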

			\begin{proof}
				For $X\in \NO$, for every $Y\in T_x\NO$, $Y$ will satisfies \eqref{eq_tangentspaceequationsofNO}, which are 
				\begin{equation}
				\begin{split}
				d\mu_k(Y)=\frac{d}{dt}Y_{k}-[X,I_kY]=0.
				\label{eq_tangentspace}
				\end{split}
				\end{equation}
				Suppose $I_j$ preserve the complex structures for $j\neq k$, then $d\mu_k(I_jY)=0$ for any $k=1,\cdots,7$. 
				
				By a straight forward computation, the equations $d\mu_k(I_jY)=0$ are equivalent to $$-\frac{d}{dt} Y_k+[X,\iota_{0ijk}I_kY]=0,$$
				where $(ijk)$ are index with $f_{ijk}\neq 0$. 
				
				Therefore, suppose $I_k$ preserve the tangent space of $\NO$, for any $X\in \NO$ and $Y\in T_X\NO$, they must satisfies $[X,(\iota_{0ijk}\circ I_k+I_k)Y]=0$ for any $(ijk)$ span an associate plane. For any $l$ different from $0,i,j,k$, we pick up a trivial solution of \eqref{Eq_octonionNahm} with $X_l(0)=\xi\in\mg$ and $X_{l'}(0)=0$ for $l'\neq l$. Then $[X_l,(I_kY)_l]=0$, However, as the tangent space equations \eqref{eq_tangentspace} for $Y_0\equiv 0$ will be an ODE, where we can always find solutions will an given initial values. Suppose $G$ is non-abelian, we can always find $[X_l(0),(I_kY)_l(0)]\neq 0$, which gives a contradiction.
				
				As the complex structures doesn't preserve the tangent space of $\NO$, they can not be reduced to $\MO$.
			\end{proof}
			
		\end{subsection}
	\end{section}
	
	\begin{section}{Constructing Solutions from Commuting Triples}
		In this section, we will introduce a decoupled version for the octonionic Nahm's equations and we will prove a Kempf-Ness type theorem for the decoupled octonionic Nahm's equations.  
		
		\begin{subsection}{The Decoupled octonionic Nahm's equations}
			Let $G$ be a compact Lie group and $G^{\mathbb{C}}$ be a complexified of $G$. Let $\mg_{\mathbb{C}}=\mg\oplus i\mg$ be the complexified Lie algebra. 
			
			We write $\al=\frac{1}{2}(X_0+iX_1)$, $\be_1=\frac{1}{2}(X_2+iX_3)$, $\be_2=\frac{1}{2}(X_4+iX_5)$, $\be_3=\frac{1}{2}(X_6+iX_7)$ and define the following operators
			\begin{equation*}
			\begin{split}
			&d_{\al}=\frac{1}{2}\frac{d}{dt}+\al,\;d_{\be_1}=[\be_1,\;],\;d_{\be_2}=[\be_2,\;],\;d_{\be_3}=[\be_3,\;],\\
			&\bar{d}_{\al}=\frac{1}{2}\frac{d}{dt}-\al^{\st},\;\bar{d}_{\be_1}:=-[\be_1^{\st},\;],\;\bar{d}_{\be_2}=-[\be_2^{\st},\;],\;\bar{d}_{\be_3}=-[\be_3^{\st},\;].
			\end{split}
			\end{equation*}
			The octonionic Nahm's equations can be re-written as 
			\begin{equation}
			\begin{split}
			&[d_{\al},d_{\be_1}]+[\bar{d}_{\be_2},\bar{d}_{\be_3}]=0,\;[d_{\al},d_{\be_3}]+[\bar{d}_{\be_1},\bar{d}_{\be_2}]=0,\;[d_{\al},d_{\be_3}]+[\bar{d}_{\be_1},\bar{d}_{\be_2}]=0,\\
			& [d_{\al},\bar{d}_{\al}]+[d_{\be_1},\bar{d}_{\be_1}]+[d_{\be_2},\bar{d}_{\be_2}]+[d_{\be_3},\bar{d}_{\be_3}]=0.
			\label{Eq_CY4form}
			\end{split}
			\end{equation}
			
			We introduce extra condition that $[\be_i,\be_j]=0$ for any $i,j=1,2,3$, then we call the following equation the decoupled octonionic Nahm's equations:
			\begin{equation}
			\begin{split}
			&\frac{d\be_i}{dt}+2[\al,\be_i]=0,\;[\be_i,\be_j]=0,\;for\;any\;i,j=1,2,3,\\
			&\hF(\al,\be):=\frac{d}{dt}(\al+\al^{\st})+2([\al,\al^{\st}]+[\be_1,\be_1^{\st}]+[\be_2,\be_2^{\st}]+[\be_3,\be_3^{\st}])=0.
			\end{split}
			\label{eq_decoupledNahm}
			\end{equation}
			
			To motivate our treatment of these equations, we have
			\begin{proposition}
				We consider $\mathbb{C}^4$ with coordinate $(z_0=t+iy_1,z_1,z_2,z_3)$ and the canonical K\"ahler structure, then the system \eqref{eq_decoupledNahm} is equivalent to the Hermitan-Yang-Mills equations over $\mathbb{C}^4$ which are invariant in the direction of $y_1,z_1,z_2,z_3$.
			\end{proposition}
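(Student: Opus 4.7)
The plan is to set up an explicit dictionary between invariant unitary connections on $\mathbb{C}^4$ and tuples $(\al,\be_1,\be_2,\be_3)$, and then verify that the two halves of the Hermitian-Yang-Mills condition match the two rows of \eqref{eq_decoupledNahm} line by line. Adopt real coordinates $(t,y_1,u_j,v_j)_{j=1,2,3}$ with $z_0=t+iy_1$ and $z_j=u_j+iv_j$. For a unitary connection $A$ on the trivial $G$-bundle invariant in every direction except $t$, the real components $A_t,A_{y_1},A_{u_j},A_{v_j}$ are $\mg$-valued functions of $t$; identify them with $X_0,X_1,\dots,X_7$ in order. Converting to complex coordinates one finds $A_{\bz_0}=\al$ and $A_{\bz_j}=\be_j$, while unitarity forces $A_{z_\mu}=-A_{\bz_\mu}^{\star}$. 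Hence on adjoint sections $\bpa_A=d\bz_0\ot(\tfrac12\pa_t+\al)+\sum_{j=1}^3 d\bz_j\ot[\be_j,\cdot]$ and $\pa_A=dz_0\ot(\tfrac12\pa_t-\al^{\star})+\sum_{j=1}^3 dz_j\ot[-\be_j^{\star},\cdot]$, recovering exactly the operators $d_\al,d_{\be_j},\bar d_\al,\bar d_{\be_j}$ of the statement.

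With the dictionary in place, I would compute the curvature component by component. For $F^{0,2}$, on adjoint-valued sections $F^{0,2}_{\bz_\mu\bz_\nu}$ equals the commutator of the two corresponding operators from $\bpa_A$; invariance kills the partial derivatives in $y_1$ and in $z_j$, and after applying the Jacobi identity one reads off $F^{0,2}_{\bz_0\bz_j}=\tfrac12\pa_t\be_j+[\al,\be_j]$ and $F^{0,2}_{\bz_i\bz_j}=[\be_i,\be_j]$. These six scalar equations are precisely the first line of \eqref{eq_decoupledNahm} (modulo the overall factor of $2$). For the trace part, a direct calculation using $A_{z_0}=-\al^{\star}$, $A_{z_j}=-\be_j^{\star}$, and invariance yields $F_{z_0\bz_0}=\tfrac12\pa_t(\al+\al^{\star})+[\al,\al^{\star}]$ and $F_{z_j\bz_j}=[\be_j,\be_j^{\star}]$, so $\Lam F$ is a positive constant multiple of $\hF(\al,\be)$; thus $\Lam F=0$ is exactly the second equation of \eqref{eq_decoupledNahm}.

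The argument is essentially a direct verification, and the only real work is bookkeeping. The main potential obstacle is keeping the dictionary between real and complex components consistent with the unitarity convention, in particular the sign $A_{z_\mu}=-A_{\bz_\mu}^{\star}$ and the factors of $\tfrac12$ arising from $\pa_t=\pa_{z_0}+\pa_{\bz_0}$ under the invariance assumption in $y_1$. Once these are fixed, all coefficients line up on the nose and the equivalence of the two systems follows.
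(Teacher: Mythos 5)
Your verification is correct: the dictionary $A_{\bz_0}=\al$, $A_{\bz_j}=\be_j$ with $A_{z_\mu}=-A_{\bz_\mu}^{\st}$ from unitarity is the right one, and the component computation correctly matches $F^{0,2}=0$ with the complex equations $\frac{d\be_i}{dt}+2[\al,\be_i]=0$, $[\be_i,\be_j]=0$ and $\Lam F=0$ with $\hF(\al,\be)=0$, up to the harmless overall factor of $2$. The paper states this proposition purely as motivation and gives no proof at all, so your direct dimensional-reduction argument supplies exactly the omitted bookkeeping; the only cosmetic mismatch is that the paper's operators $d_\al,d_{\be_j}$ are the $(0,1)$-components and $\bar d_\al,\bar d_{\be_j}$ the $(1,0)$-components, the reverse of what the bars might suggest, but your formulas agree with the paper's definitions.
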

			
			In addition, the equations \eqref{eq_decoupledNahm} have extra $SU(3)$ symmetry. Let $(\al,\be_1,\be_2,\be_3)$ is a solution and let $A\in SU(3)$, then $A$ could acts on $(\be_1,\be_2,\be_3)$ by multiplication and $(\al,A(\be_1,\be_2,\be_3))$ is still a solution.
			
			Similarly, we define $\MMOd$ be the moduli space of the decoupled octonionic Nahm's equations \eqref{eq_decoupledNahm}:
			$$\MMOd:=\{(\al,\be_1,\be_2,\be_3)\;\mathrm{smooth\;solutions\;to\;\eqref{eq_decoupledNahm}\;over\;[0,1]}\}/\MG_0.$$ 
			
			However, the decoupled octonionic Nahm's equations have extra symmetry. We define $\MGC:=\{g:[0,1]\to G^{\mathbb{C}}\}$, then the action of $g$ is defined as \begin{equation}
			\begin{split}
			\al'=g(\al)&=g\al g^{-1}-\frac{1}{2}\frac{dg}{dt}g^{-1},\\
			\be'_i=g(\be_i)&=g\be_ig^{-1}.
			\end{split}
			\end{equation} 
			This action preserve the first system of \eqref{eq_decoupledNahm}. To be more precisely, we have 
			\begin{equation*}
			\begin{split}
			\frac{d\be'_i}{dt}+2[\al',\be_i']=g(\frac{d\be_i}{dt}+2[\al,\be_i])g^{-1},\;[\be_i',\be_j']=g^{-1}[\be_i,\be_j]g.
			\end{split}
			\end{equation*} 
			
			In addition, the second system $\hF(\al,\be_1,\be_2,\be_3)=0$ could be regarded as the moment map equation for the $\MG_0$ action. We call the first line of \eqref{eq_decoupledNahm} the complex equation and the second line of \eqref{eq_decoupledNahm} real equation.
			
			For $(\al,\be_1,\be_2,\be_3)$, we can choose a complex gauge $g\in\MGC$ such that $\al'=g\al g^{-1}-\frac{1}{2}\frac{dg}{dt}g^{-1}=0$. We write $\be_i'=g\be_ig^{-1}$, then the first system of \eqref{eq_decoupledNahm} become $\frac{d}{dt}\be_i'=0$, which means $g \be_i g^{-1}$ are independent of $t$. We denote $N$ be the commuting triple over $T^{\st}\GC$:
			\begin{equation}
			\begin{split}
			N:=\{(g,t_1,t_2,t_3)\in\GC\ti \mfgc\ti\mfgc\ti\mfgc|[t_i,t_j]=0\},
			\end{split}
			\end{equation}
			
			and the previous argument defines the following map by forgetting the second system of \eqref{eq_decoupledNahm}:
			\begin{equation}
			\begin{split}
			\Theta:&\MMOd \to N,\\
			&(\al,\be_1,\be_2,\be_3)\to (g(1),g\be_2g^{-1},g\be_3g^{-1},g\be_4g^{-1}),
			\label{eq_KHmap}
			\end{split}
			\end{equation}
			where $g\in\MGCC$ is the unique element solves $\frac{dg}{dt}=g\al$ with $g(0)=\Id$.
			
			In the rest of this section, we will prove
			\begin{theorem}
				\label{thm_kobayashihitchin}
				The map $\Theta$ defined in \eqref{eq_KHmap} is a bijection.
			\end{theorem}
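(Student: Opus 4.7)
The plan is to realise $\Theta$ as a Kempf-Ness type correspondence. Let $\mathcal{G}^{\mathbb{C}}_0$ denote the complexification of $\MG_0$, i.e., smooth maps $h:[0,1]\to G^{\mathbb{C}}$ with $h(0)=h(1)=\Id$. A direct computation shows that $\mathcal{G}^{\mathbb{C}}_0$ preserves the complex equation (the first line of \eqref{eq_decoupledNahm}) together with the $\Theta$-image: the $g$ in the construction of $\Theta$ transforms as $g\mapsto gh^{-1}$, and since $h(1)=\Id$ both $g(1)$ and the (now $t$-independent) quantity $g\beta_i g^{-1}$ are fixed. Hence $N$ is naturally identified with the quotient of complex-equation solutions by $\mathcal{G}^{\mathbb{C}}_0$, and the theorem reduces to the statement that every $\mathcal{G}^{\mathbb{C}}_0$-orbit meets the real-equation locus $\hat{F}=0$ in exactly one $\MG_0$-orbit.

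For surjectivity, given $(g_1,t_1,t_2,t_3)\in N$, pick any smooth path $h_0:[0,1]\to G^{\mathbb{C}}$ with $h_0(0)=\Id$ and $h_0(1)=g_1^{-1}$, and form the reference configuration $(\alpha^{(0)},\beta^{(0)}_i):=h_0\cdot(0,t_1,t_2,t_3)$, which solves the complex equation and has $\Theta$-image exactly $(g_1,t_1,t_2,t_3)$. I then search for $h=\exp(\phi)\in\mathcal{G}^{\mathbb{C}}_0$ with $\phi:[0,1]\to i\mathfrak{g}$ self-adjoint and $\phi(0)=\phi(1)=0$ such that $h\cdot(\alpha^{(0)},\beta^{(0)}_i)$ satisfies $\hat{F}=0$. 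This is the Euler-Lagrange equation of a Donaldson-type functional $\mathcal{E}(\phi)$; since \eqref{eq_decoupledNahm} is a one-dimensional reduction of Hermitian-Yang-Mills on $\mathbb{C}^4$, the standard computation produces convexity of $\mathcal{E}$ along the affine paths $\phi_s=s\phi_1+(1-s)\phi_0$, and the Poincar\'e inequality on $[0,1]$ (available because $\phi$ vanishes at both endpoints) combined with a commutator bound in the fixed $t_i$ gives coerciveness. The direct method then produces a minimiser of $\mathcal{E}$, which by the Euler-Lagrange equation solves the real equation and provides the required preimage in $\MMOd$.

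For injectivity, if two classes $[(\alpha^{(j)},\beta^{(j)}_i)]$, $j=1,2$, share the same $\Theta$-image, solving $\frac{dg_j}{dt}=g_j\alpha^{(j)}$ with $g_j(0)=\Id$ and applying the corresponding complex gauge transformations brings each solution to the canonical form $(0,t_1,t_2,t_3)$; since $g_j(1)=g_1$ by hypothesis, the element $h:=g_2^{-1}g_1\in\mathcal{G}^{\mathbb{C}}_0$ intertwines the originals. Evaluating the convex functional $\mathcal{E}$ along the geodesic $s\mapsto e^{s\log h}$ joining $\Id$ to $h$, whose endpoints are both critical points, forces $\mathcal{E}$ to be constant along the geodesic and hence $h$ to be unitary, giving $h\in\MG_0$ and $\MG_0$-equivalence of the original classes. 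The main analytic obstacle throughout is coerciveness of $\mathcal{E}$, which can degenerate along the common centraliser of $\{g_1,t_1,t_2,t_3\}$ in $\mathfrak{g}^{\mathbb{C}}$; removing this finite-dimensional ambiguity by an explicit gauge fix, following the strategy of Kronheimer \cite{kronheimer2004hyperkahler}, restores the required estimate.
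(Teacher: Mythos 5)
Your overall strategy --- reduce to a Kempf--Ness statement and prove existence variationally, uniqueness by convexity --- is the same as the paper's, which follows Donaldson's treatment of the classical Nahm's equations. But two steps as written do not hold up. First, convexity of your functional $\mathcal{E}$ along the \emph{affine} paths $\phi_s=s\phi_1+(1-s)\phi_0$ is false for non-abelian $G$: the potential term $\sum_i\Tr(\be_i h^{-1}\be_i^{\st}h)$ is convex along \emph{geodesics} $h_s=g^{\st}e^{s\xi}g$ of the symmetric space $\mathcal{H}=G^{\mathbb{C}}/G$, and $e^{s\phi_1+(1-s)\phi_0}$ is not a geodesic unless $[\phi_0,\phi_1]=0$. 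The paper sidesteps this entirely: for existence it writes the functional in the gauge $\al=0$, $\be_i=t_i$ constant as a mechanical action $\int\tfrac12|\dot h|^2_{\mathcal{H}}+V(h)$ with $V\geq0$, and minimizes over paths in the complete nonpositively curved space $\mathcal{H}$ with \emph{prescribed endpoints} $h(0)=1$, $h(1)=g_0^{\st}g_0$ (Proposition \ref{prop_existence}); the kinetic term alone gives compactness of minimizing sequences, and no convexity in gauge-transformation space is needed. Relatedly, your worry that coercivity degenerates along the centraliser of $(g_1,t_1,t_2,t_3)$ is vacuous once $\phi(0)=\phi(1)=0$ is imposed, so the ``explicit gauge fix'' is not needed; but conversely you never actually verify coercivity or lower semicontinuity of $\mathcal{E}$, which is where the analytic content of the existence step lies.

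Second, in the injectivity step, ``$\mathcal{E}$ constant along the geodesic, hence $h$ unitary'' is a gap: constancy of a convex function along a geodesic only says every point of it is critical; to conclude $h^{\st}h\equiv\Id$ you must compute the second variation and show that its vanishing forces $d_{\al}\xi=0$ and $[\be_i^{\st},\xi]=0$, and then invoke the boundary condition $\xi(0)=0$. (Also $\log h$ need not exist in $G^{\mathbb{C}}$; one should work with $h^{\st}h$ in $\mathcal{H}$, where geodesics between arbitrary points do exist.) The paper's uniqueness argument is more elementary and avoids second variations altogether: it proves the differential inequality $\frac{d^2}{dt^2}\sigma\geq-(|\hF|+|\hF'|)$ for $\sigma(h)=\Tr(h)+\Tr(h^{-1})-2$ in the interval variable $t$ (Lemma \ref{lem_unique}), so that when both configurations solve the real equation, $\sigma$ is convex in $t$, nonnegative, and vanishes at $t=0,1$, hence $\sigma\equiv0$ and $h\equiv\Id$ (Proposition \ref{prop_uniqueness}). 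I recommend replacing your convexity-in-gauge-space arguments with these two devices.
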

		\end{subsection}

		\begin{subsection}{The Existence and Uniqueness of Solutions}
			We first prove $\Theta$ is surjective with variational method. As the form of the decoupled octonionic Nahm's equations \eqref{eq_decoupledNahm} is very closed to the classical ones, our existence arguments will largely use the method introduced by Donaldson \cite{donaldson1984nahmclassification}
			\begin{lemma} 
				For $g\in \MGCC$, then for any $(\al',\be_1',\be_2',\be_3')=g(\al,\be_1,\be_2,\be_3)$, we have
				\begin{equation}
				\begin{split}
				g^{-1}\hF(\al',\be_1',\be_2',\be_3')g-\hF(\al,\be_1,\be_2,\be_3)=-2\bar{d}_{\al}(h^{-1}d_{\al}h)-2\sum_{i=1}^3\bar{d}_{\be_i}(h^{-1}d_{\be_i} h),
				\label{eq_indentityuniquenesslemma}
				\end{split}
				\end{equation}
				where $h=g^{\st}g$.
			\end{lemma}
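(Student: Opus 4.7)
The plan is a direct computation, following the standard template for such Donaldson-functional identities. The strategy is to expand both sides as explicit expressions in $g$, $h=g^{\st}g$, $\al$, $\be_i$, and their $t$-derivatives, and then match term by term.

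\textbf{Key algebraic simplifications.} Setting $\phi:=\dot g g^{-1}$ and using $h=g^{\st}g$, I would first establish the two workhorse identities
\[
g^{-\st}=gh^{-1},\qquad h^{-1}\dot h = g^{-1}(\phi+\phi^{\st})g.
\]
The first (from $g^{\st}g = h$) rewrites $g^{-\st}\al^{\st}g^{\st}=g(h^{-1}\al^{\st}h)g^{-1}$; the second (from $\dot h=\dot g^{\st}g+g^{\st}\dot g$) rewrites the extra $-\tfrac12(\phi+\phi^{\st})$ contribution to $\al'+(\al')^{\st}$ in the same $g\cdot g^{-1}$ conjugation form. Combining,
\[
\al'+(\al')^{\st} \;=\; g\!\left(\al + h^{-1}\al^{\st}h - \tfrac12 h^{-1}\dot h\right)\!g^{-1},
\]
and analogously $g^{-1}[\be_i',(\be_i')^{\st}]g=[\be_i,h^{-1}\be_i^{\st}h]$ for the Higgs terms.

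\textbf{Expanding the LHS.} Differentiating the displayed expression for $\al'+(\al')^{\st}$ in $t$ produces a conjugation-commutator piece $[\phi,\al'+(\al')^{\st}]$ plus a $g\,\dot{(\cdot)}\,g^{-1}$ piece. Similarly, expanding $[\al',(\al')^{\st}]$ from the substitutions $\al'=g\al g^{-1}-\tfrac12\phi$ and $(\al')^{\st}=g(h^{-1}\al^{\st}h)g^{-1}-\tfrac12\phi^{\st}$ gives four bilinear pieces. After summing these with the Higgs contribution and using the two identities above to collapse everything to the form $g(\cdots)g^{-1}$, I will read off $g^{-1}\hF(\al',\vec\be')g-\hF(\al,\vec\be)$ as an explicit expression in $\al,\be_i,h,\dot h,\ddot h$.

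\textbf{Expanding the RHS and matching.} On the right-hand side I would apply the definitions of $d_{\al},\bar d_{\al},d_{\be_i},\bar d_{\be_i}$ to $h$ and repeatedly use the derivation rule
\[
\tfrac{d}{dt}(h^{-1}Xh)=[h^{-1}Xh,\,h^{-1}\dot h]+h^{-1}\dot X h,
\]
to expand $\bar d_{\al}(h^{-1}d_{\al}h)$ and $\bar d_{\be_i}(h^{-1}d_{\be_i}h)$ into explicit commutator expressions. Matching with Step~2, the $\ddot h$ contributions (which appear only through $\tfrac{d}{dt}(h^{-1}\dot h)$) should cancel; the terms proportional to $\dot\al$ and $\dot\be_i$ cancel against the $h^{-1}\dot{(\cdot)}h$ pieces; and the remaining quadratic pieces should agree via the Jacobi identity together with the elementary identity $h^{-1}[X,h]=h^{-1}Xh-X$.

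\textbf{Main obstacle.} The only difficulty is bookkeeping. The $\al$-sector alone produces a handful of terms from $\tfrac{d}{dt}(\al'+(\al')^{\st})$ and a further handful from $2[\al',(\al')^{\st}]$; the corresponding RHS expansion of $\bar d_{\al}(h^{-1}d_{\al}h)$ yields a matching number. Keeping track of signs, and recognizing when apparently distinct quantities coincide via $g^{-\st}=gh^{-1}$ and $h^{-1}\dot h=g^{-1}(\phi+\phi^{\st})g$, is what makes the computation nontrivial; but no new idea beyond careful algebra is needed.
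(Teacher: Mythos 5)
Your proposal is correct in outline and would close if carried through, but it takes a genuinely different route from the paper. The paper's proof is essentially two lines: it rewrites the real moment map as a sum of commutators of the first--order operators already introduced, $\hF(\al,\be_1,\be_2,\be_3)=2\bigl([d_{\al},\bar{d}_{\al}]+\sum_{i=1}^3[d_{\be_i},\bar{d}_{\be_i}]\bigr)$, and then invokes the conjugation laws $g^{-1}\circ\bar{d}_{\al'}\circ g=\bar{d}_{\al}$, $g^{-1}\circ d_{\al'}\circ g=d_{\al}+h^{-1}(d_{\al}h)$ and their $\be_i$ analogues, after which the identity drops out formally by bilinearity of the commutator --- this is the one--dimensional avatar of the standard Hermitian--Yang--Mills transformation computation. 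You instead expand everything in $g$, $\phi=\dot g g^{-1}$ and $h$ and match terms. Your two workhorse identities are right: from $g^{-\st}=gh^{-1}$ and $\dot h=\dot g^{\st}g+g^{\st}\dot g$ one does get $\al'+(\al')^{\st}=g\bigl(\al+h^{-1}\al^{\st}h-\tfrac12 h^{-1}\dot h\bigr)g^{-1}$ and $(\be_i')^{\st}=g\,(h^{-1}\be_i^{\st}h)\,g^{-1}$, and these are exactly the element--level content of the paper's operator identities, so the remaining bookkeeping must close and the answer depends on $g$ only through $h$, as you anticipate. What each approach buys: the operator formulation makes every cancellation automatic and the proof essentially definitional, at the price of having to recognize the commutator structure of $\hF$ in advance; your expansion is self--contained and makes the disappearance of $\dot g g^{-1}$ in favor of $h^{-1}\dot h$ explicit, but the final matching of the $\ddot h$, $\dot\al$, and quadratic terms is asserted (``should cancel'', ``should agree'') rather than exhibited, which is precisely the part the paper's formalism renders trivial. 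If you write it up your way, carry out that last matching explicitly; it is routine but it is where sign errors live.
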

			\begin{proof}
				Note that we can write $\hF(\al,\be_1,\be_2,\be_3)=2([d_{\al},\bar{d}_{\al}]+\sum_{i=1}^3[d_{\be_i},\bar{d}_{\be_i}])$. The Lemma follows directly from the following identities
				\begin{equation*}
				\begin{split}
				&g^{-1}\circ\bar{d}_{\al'}\circ g=\bar{d}_{\al},\;
				g^{-1}\circ d_{\al}\circ g=d_{\al}+h^{-1}d_{\al}h,\\
				&g^{-1}\circ\bar{d}_{\be_i}\circ g=\bar{d}_{\be_i},\;g^{-1}\circ d_{\be_i}\circ g=d_{\be_i}+h^{-1}d_{\be_i}h.
				\end{split}
				\end{equation*}
			\end{proof}
			
			\begin{proposition}
				\label{prop_unique}
				Given $(\al,\be_1,\be_2,\be_3)\in V$ and $g\in\MGCC$, we write $(\al',\be_1',\be_2',\be_3')=g(\al,\be_1,\be_2,\be_3)$ and define a functional of $g$:
				\begin{equation}
				\ML(g)=\int_0^1|\al'+(\al')^{\st}|^2+2\sum_{i=1}^3|\be'_i|^2.
				\end{equation}
				Then $\hF(\al',\be_1',\be_2',\be_3')=0$ if and only if $\ML$ is stationary w.r.t. variation $\delta g$ supported in $(0,1)$.
			\end{proposition}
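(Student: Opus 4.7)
The plan is to compute the first variation of $\ML$ along a one-parameter family $g_s = \exp(s\xi)g$, where $\xi: [0,1] \to \mfgc$ is smooth with $\xi(0) = \xi(1) = 0$, and to identify this variation with the $L^2$ pairing against $\hF(\al', \be_1', \be_2', \be_3')$. Because $g_s(\al, \be_i) = \exp(s\xi)(\al', \be_i')$, the infinitesimal variations at $s = 0$ are
$$\delta\al' = -\tfrac{1}{2}\dot\xi + [\xi, \al'], \qquad \delta \be'_i = [\xi, \be'_i].$$

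I would first decompose $\xi = \xi_u + \xi_h$ into its anti-Hermitian part $\xi_u \in \mg$ and its Hermitian part $\xi_h \in i\mg$. A short computation using $\xi_u^{\st} = -\xi_u$ gives $\delta(\al' + (\al')^{\st}) = [\xi_u, \al' + (\al')^{\st}]$ and $\delta \be'_i = [\xi_u, \be'_i]$, which are infinitesimal unitary conjugations; by $\mathrm{Ad}$-invariance of the Hermitian inner product these contribute nothing to $\delta \ML$, reflecting the fact that $\ML$ is really a functional of $h := g^{\st}g$.

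For the Hermitian part $\xi_h$, taking adjoints via $[A,B]^{\st} = [B^{\st}, A^{\st}]$ yields
$$\delta(\al' + (\al')^{\st}) = -\dot\xi_h - [\al' - (\al')^{\st},\, \xi_h].$$
Integrating the induced variation of $|\al' + (\al')^{\st}|^2$ by parts (boundary terms vanish since $\xi_h$ has compact support in $(0,1)$), and using both the cyclic identity $\langle A, [B,C]\rangle = \langle [A,B], C\rangle$ for the Hermitian inner product on self-adjoint elements and the algebraic identity $[\al' + (\al')^{\st},\, \al' - (\al')^{\st}] = -2[\al', (\al')^{\st}]$, one collects the term $2\int_0^1 \langle \frac{d}{dt}(\al' + (\al')^{\st}) + 2[\al', (\al')^{\st}],\, \xi_h\rangle\, dt$. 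A parallel computation for each $|\be'_i|^2$ contributes $4\int_0^1 \langle [\be'_i, (\be'_i)^{\st}],\, \xi_h\rangle\, dt$. Summing, the factors of $2$ in the real line of \eqref{eq_decoupledNahm} match precisely, giving
$$\frac{d\ML(g_s)}{ds}\bigg|_{s=0} = 2\int_0^1 \big\langle \hF(\al', \be'_1, \be'_2, \be'_3),\; \xi_h\big\rangle\, dt.$$

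The conclusion then follows: since anti-Hermitian variations contribute nothing and $\hF$ is itself self-adjoint, stationarity of $\ML$ under all compactly supported variations $\xi$ is equivalent to the above integral vanishing for every smooth compactly supported $\xi_h: (0,1) \to i\mg$, which by the fundamental lemma of the calculus of variations is equivalent to $\hF(\al', \be'_1, \be'_2, \be'_3) \equiv 0$ on $(0,1)$, hence on $[0,1]$ by continuity. The main technical obstacle is the bookkeeping in verifying that the various commutator and integration-by-parts terms assemble with exactly the coefficients needed to reconstruct $\hF$; no single step is deep.
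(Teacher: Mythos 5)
Your proof is correct and follows essentially the same route as the paper's: compute the first variation of $\ML$, reduce to self-adjoint variations, integrate by parts, and identify the resulting expression with the $L^2$ pairing against $\hF(\al',\be_1',\be_2',\be_3')$. The only difference is cosmetic --- you explicitly justify why the anti-Hermitian part of the variation contributes nothing (via $\mathrm{Ad}$-invariance, i.e.\ $\ML$ depending only on $g^{\st}g$), whereas the paper simply takes $\delta g$ self-adjoint without comment.
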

			\begin{proof}
				We compute the variation $\delta g$. Without loss of generality, we suppose $g=1$, so $\al=\al',\;\be=\be'$ and $\delta g$ is self-adjoint. Then $\delta \al=[\delta g,\al]-\frac{1}{2}(\delta g)$ and $\delta \beta_i=[\delta g,\be_i]$. $\delta(\al+\al^{\da})=[\delta g,\al-\al^{\da}]-\frac{d}{dt}(\delta g)$.
				
				We compute the variation $\delta\ML$:
				\begin{equation}
				\begin{split}
				\delta \ML&=\int \Tr((\al+\al^{\st})\delta(\al+\al^{\st})+2\sum_{i=1}^3(\be_i(\delta \be_i)^{\st})dt\\
				&=\int \Tr(-(\al+\al^{\st})\frac{d}{dt}\delta g+(\al+\al^{\st})[\delta g,\al-\al^{\st}]+2\sum_{i=1}^3(\be_i[\be_i^{\st},\delta g])\\
				&=\int \Tr(\delta g(\frac{d}{dt}(\al+\al^{\st})+2([\al,\al^{\st}]+\sum_{i=1}^3[\be_i,\be_i^{\st}]))\\
				&=\int \Tr(\delta g\hF(\al,\be_1,\be_2,\be_3)).
				\end{split}
				\end{equation}
			\end{proof}
			
			\begin{proposition}
				\label{prop_existence}
				For $(\al,\be_1,\be_2,\be_3)$ satisfies the complex equation over $[0,1]$, then for any $h_-,h_+$, there is a continous $h:[0,1]\to GL(k,\mathbb{C})$ such that
				\begin{itemize}
					\item [i)]$F(\al',\be')=0$,
					\item [ii)]$h(0)=h_-$, $h(1)=h_+.$
				\end{itemize}
			\end{proposition}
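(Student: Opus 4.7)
The plan is to use Donaldson's variational method: apply Proposition \ref{prop_unique}, which identifies solutions of $\hF(\al',\be')=0$ with interior critical points of $\ML$, and produce such a critical point by minimizing $\ML$ over the class of complex gauge transformations satisfying the prescribed boundary conditions. Let $\MSH$ denote the space of continuous $h:[0,1]\to GL(k,\BC)$ with $h(0)=h_-$, $h(1)=h_+$; the goal is to attain $\inf_{h\in\MSH}\ML(h)$.

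The first ingredient is convexity. Because $\ML(h)$ depends on $h$ only through the Hermitian metric $H=h^\st h$, I regard $\ML$ as a functional on paths in the symmetric space $\GC/G$ with fixed endpoints. Taking a geodesic variation $H_s=H_0^{1/2}\exp(sA)H_0^{1/2}$ with $A$ self-adjoint and vanishing at $t=0,1$, and integrating by parts using the identity in the Lemma preceding Proposition \ref{prop_unique}, one obtains
\begin{equation*}
\tfrac{d^2}{ds^2}\ML(H_s)\;=\;2\int_0^1\Bigl(|d_\al A|^2_{H_s}+\sum_{i=1}^3|d_{\be_i}A|^2_{H_s}\Bigr)\,dt\;\geq\;0.
\end{equation*}
This convexity is strict whenever $A$ is nonzero, which both precludes degeneracy along minimizing sequences and will give uniqueness of the minimizer.

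The second ingredient is coercivity. After using the complex gauge freedom to first arrange $\al=0$ (so that the complex equation forces each $\be_i$ to be constant), the term $|\al'+(\al')^\st|^2$ in $\ML$ is pointwise comparable to $|H^{-1}\dot H|^2_H$, so $\ML$ dominates the squared length in $\GC/G$ of the path $H$. With $H(0),H(1)$ prescribed, this produces a uniform $H^1$-bound on any minimizing sequence $\{H_n\}$. Rellich compactness then yields uniform convergence of a subsequence to some $H_\infty$ realizing the correct boundary data, and weak lower semicontinuity of $\ML$ forces $H_\infty$ to be a minimizer. By Proposition \ref{prop_unique}, $H_\infty$ satisfies $\hF(\al',\be')=0$, and standard ODE bootstrapping applied to this equation upgrades $H_\infty$ from $H^1$ to smooth.

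The main obstacle is the coercivity step: since $\GC/G$ is noncompact, one must exclude the possibility that eigenvalues of $H_n$ escape to $0$ or $\infty$ somewhere in the interior of $[0,1]$ along a minimizing sequence. The endpoints are pinned by the boundary conditions, but an interior blow-up could in principle occur while $\int|H^{-1}\dot H|^2_H\,dt$ remains bounded. I expect to control this by combining the convexity identity above with a barrier argument applied to the largest and smallest eigenvalues of $H_n$, both of which are convex functions along the geodesic flow in $\GC/G$; the prescribed endpoints then force pointwise $L^\infty$-bounds on these eigenvalues throughout $[0,1]$, which secures the required precompactness.
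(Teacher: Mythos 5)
Your proposal follows the same route as the paper: gauge $\al$ to zero so the $\be_i$ become constant, recognize $\ML$ as a mechanical Lagrangian $\int\frac12|\dot h|^2_{\mathcal H}+V(h)$ with $V\geq 0$ on the symmetric space $\mathcal H=\GC/G$, produce a minimizing path with the prescribed endpoints by the direct method, and invoke the variational characterization of Proposition \ref{prop_unique} to conclude that the minimizer solves $\hF=0$. The convexity and lower-semicontinuity details you supply are exactly what the paper compresses into ``the method of calculus of variations.''

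One correction to the step you single out as the main obstacle: interior escape of eigenvalues along a minimizing sequence is already impossible once $\int_0^1|H^{-1}\dot H|^2_H\,dt$ is bounded. By Cauchy--Schwarz this bounds the length of the path in $\mathcal H$, so every $H_n(t)$ lies in a fixed metric ball about $h_-$; since $\GC/G$ is a complete (Hadamard) manifold, closed balls are compact by Hopf--Rinow, and the distance to $\Id$ controls $|\log\lam_i|$ for all eigenvalues. So no separate barrier argument is needed. Moreover, the barrier you propose would not apply as stated: the convexity of $\Tr(h)+\Tr(h^{-1})$ in $t$ (the content of Lemma \ref{lem_unique}) is a consequence of the equation $\hF=0$ and does not hold for arbitrary competitors in a minimizing sequence, while your second-variation identity is convexity in the deformation parameter $s$, not in $t$. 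With the length-bound observation substituted for that paragraph, the argument is complete and matches the paper's.
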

			\begin{proof}
				Let $\al=0$, $\be_1,\be_2,\be_3$ are constant elements in the Lie algebra. We write $h=g^{\st}g$, then the Lagrangian integrand is 
				\begin{equation}
				\begin{split}
				&|\frac{dg}{dt}g^{-1}+(g^{\st})^{-1}\frac{dg^{\st}}{dt}|^2+\sum_{i=1}^3|g\be_ig^{-1}|^2\\
				=&\Tr(h^{-1}\frac{dh}{dt})^2+\sum_{i=1}^3\Tr(\be_ih^{-1}\be_i^{\st}h).
				\end{split}
				\end{equation}
				Let $\mathcal{H}=G^{\mathbb{C}}/G$ be the homogeneous space, then $\Tr(h^{-1}\frac{dh}{dt})^2$ is the standard $G^{\mathbb{C}}$ invariant Riemannian metric on $\mathcal{H}$. So this gauge the Lagrangian $\ML=\int \frac 12 |\frac{dh}{dt}|^2_{\mathcal{H}}+V(h)$, with $V(h)=\sum_{i=1}^3\Tr(\be_ih^{-1}\be_i^{\st}h)\geq 0$. 
				
				From method of calculus of variations that for any $h_-,h_+\in\mathcal{H}$, there exits a path $h:[0,1]\to \mathcal{H}$ smooth in (0,1), minimizing Lagrangian $\ML$ and satisfies the boundary condition $h(0)=h_-$, $h(1)=h_+$. For any $g$ satisfies $g^{\st}g=h$, we set $(\al',\be_1',\be_2',\be_3')=g(\al,\be_1,\be_2,\be_3)$, then $\hF(\al',\be_1',\be_2',\be_3')=0$.
				$$$$
				
				For $h\in\UH$, we define a function $\sigma$ measures the eigenvalues of $h$:
				$$\sigma(h):=\Tr(h)+\Tr(h^{-1})-2.$$ Suppose $\lam_1,\cdots,\lam_k$ be eigenvalues of $h$, then $\sigma(h)=\sum_{i=1}^k(\lam_i+\lam_i^{-1}-2)\geq 0$. In addition, when $\sigma(h)=0$, then $h\equiv 0$. 
				\begin{lemma}
					\label{lem_unique}
					The function $\sigma$ will satisfy 
					\begin{equation}
					\frac{d^2}{dt^2}\sigma\geq -(|\hF|+|\hF'|).
					\end{equation}
				\end{lemma}
				\begin{proof}
					We multiply $h$ in both side of \ref{eq_indentityuniquenesslemma}, then we obtain
					\begin{equation}
					\begin{split}
					\bar{d}_{\al}d_{\al}h=&\frac{1}{2}(h\hF-g^{\st}\hF'g)+\bar{d}_{\al}h h^{-1}\bar{d}_{\al}h+\sum_{i=1}^3\bar{d}_{\be_i}h h^{-1} d_{\be_i}h-\sum_{i=1}^3\bar{d}_{\be_i}d_{\be_i}h.
					\label{eq_idk}
					\end{split}
					\end{equation}
					Note that
					\begin{equation*}
					\begin{split}
					\Tr(\bar{d}_{\al}d_{\al}h)=\frac{d^2}{dt^2}\Tr(h),\;\Tr(\bar{d}_{\be_i}d_{\be_i}h)=0,\\
					\Tr(\bar{d}_{\al}h h^{-1}d_{\al}h)\geq 0,\;\Tr(\bar{d}_{\be_i}h h^{-1}d_{\be_i}h)\geq 0,
					\end{split}
					\end{equation*}
					We take trace in both side of \eqref{eq_idk} and obtain $\frac{d^2}{dt^2}\Tr(h)\geq -\frac{1}{2}(|\hF|+|\hF'|).$ Take $h$ to $h^{-1}$, we obtain  $\frac{d^2}{dt^2}\Tr(h^{-1})\geq -\frac{1}{2}(|\hF|+|\hF'|)$. The proposition follows by adding them together.
				\end{proof}
				
				This inequality implies the uniqueness of the solution.
				\begin{proposition}
					\label{prop_uniqueness}
					Let $(\al,\be_1,\be_2,\be_3)$ be a solution to the complex equations of \eqref{eq_decoupledNahm} in $(0,1)$, suppose for $g_1,g_2\in \MGCC$ with $(\al',\be_1',\be_2',\be_3'):=g_1(\al,\be_1,\be_2,\be_3),\;(\al'',\be_1'',\be_2'',\be_3''):=g_2(\al,\be_1,\be_2,\be_3)$, they satisfy the equations $\hF(\al',\be_1',\be_2',\be_3')=\hF(\al'',\be_1'',\be_2'',\be_3'')=0$. In addition, let $h_1:=g_1^{\st}g_1$ and $h_2:=g_2^{\st}g_2$, suppose $h_1(0)=h_2(0)$ and $h_1(1)=h_2(1)$, then $g_1^{\st}g_1\equiv g_2^{\st}g_2$ over $[0,1]$.
				\end{proposition}
				\begin{proof}
					WLOG, we assume $g_2=1$ and $(\al'',\be_1'',\be_2'',\be_3'')=(\al,\be_1,\be_2,\be_3)$. We write $h_1=g_1^{\st}g_1$, then by Lemma \eqref{lem_unique}, we obtain $\frac{d^2}{dt^2}\sigma(h_1)\geq 0$. In addition, as $\sigma(h_1)\geq 0$ and $\sigma(h_1)(0)=\sigma(h_1)(1)=0$, we obtain $h_1\equiv 1$.
				\end{proof}
				
				\rm{Proof of Theorem \ref{thm_kobayashihitchin}:} $\Theta$ is injective follows by Proposition \ref{prop_unique}. For any $(g_0,t_1,t_2,t_3)\in N$, we define $\al=0$ and $\be_1=t_1,\be_2=t_2,\be_3=t_3$. By proposition \ref{prop_existence}, we choose the boundary condition $h_-=1$ and $h_+=g_0^{\st}g_0$, we obtain an solution and this proves $\Theta$ is surjective.
			\end{proof}
			
		\end{subsection}
	\end{section}
	
	\begin{section}{octonionic Nahm's equations with Poles}
		In this section, we will study the reduced octonionic Nahm's equations over $(0,1)$ which is meromorphic at $t=0$ and $t=1$. The study of the meromorphic boundary condition starts in Hitchin \cite{hitchin1983construction} and later by Donaldson \cite{donaldson1984nahmclassification}. In this section, we generalize their results to the decoupled Nahm's equationscase and we only considered the case when $G^{\mathbb{C}}=GL(k,\mathbb{C})$.
		\begin{subsection}{The simple pole at $t=0$}
			To begin with, we will introduce the pole boundary condition. Let $A$ be an $k\ti k$ matrix, we write $A^{\intercal}$ be the transpose of the matrix and $A^{\st}$ be the conjugation transpose of the matrix. Recall that an principle $\mathfrak{sl}_2$ triple on $GL(k,\mathbb{C})$ can be written as
			\begin{equation}
			\begin{split}
			&a_0=\mathrm{diag}(-\frac{k-1}{4},\;-\frac{k-1}{4}+\frac{1}{2},\cdots,\frac{k-1}{4}),\\
			&b_0=\begin{pmatrix}
			0 & 0& \cdots&  &0\\
			\gamma_1&0 &\cdots & &0\\
			0& \gamma_2&\cdots & &0\\
			\cdots &\cdots & \cdots& & \cdots\\
			0& & & \gamma_{k-1}& 0
			\end{pmatrix},
			\end{split}
			\end{equation}
			where $\gamma_i=\frac{\sqrt{i(k-i)}}{2}$. Then $a_0,b_0,b_0^{\st}$ satisfy $$
			b_0=2[a_0,b_0],-a+[b_0,b_0^{\st}]=0.$$
			
			\begin{assumption}
				\label{assump}
				Let $(\al=X_0+iX_1,\be_1=X_2+iX_3,\be_2=X_4+iX_5,\be_3=X_6+iX_7)$ be a solutions to \eqref{eq_decoupledNahm} over $(0,1)$, we make the following assumptions:
				\begin{itemize}
					\item [(i)] for $i=0,\cdots,7$, $X_i^{\st}(t)=-X_i(t)$, $X_i(1-t)=X_i(t)^{\intercal}$
					\item [(ii)] for integer $1\leq i\leq 7$, $X_i$ extends to a meromoprhic function on a neighborhood of $[0,1]$ with simple poles at $t=0,1$. $X_0$ is continuous over $[0,1],$
					\item [(iii)] The real gauge group $\MG$$(\mathrm{similarly,\;} \MGCC)$ consists of continuous map $g:[0,1]\to U(k)$ $(g:[0,1]\to GL(k,\mathbb{C}))$ such that $g$ is smooth in $(0,1)$ and $g(1-t)=g^{\intercal}(t)^{-1}$. 
					\item [(iv)] let $a,b_1,b_2,b_3$ be the residue of $\al,\be_1,\be_2,\be_3$, then there exists three complex numbers $s_1,s_2,s_3$ with $|s_1|^2+|s_2|^2+|s_3|^2=1$ such that after suitable $\MG$ action, we can write $a=a_0$ and $b_i=s_ib_0$, and $(a_0,b_0,b_0^{\st})$ is a principle $\slf_2$ triple on $GL(k,\mathbb{C})$.
				\end{itemize}
			\end{assumption}
			
			We define the following moduli space
			$$\MMdp:=\{\mathrm{Solutions\;to\;\eqref{eq_decoupledNahm}\;with\;Assumption\;\ref{assump}}\}/\MG.$$
			The following remark explain that why we introduction $\mathrm{(iv)}$ in Assumption \ref{assump}.
			\begin{remark}
				By $\mathrm{(i),(ii),(iii)}$ of the Assumption \ref{assump}, near $t=0$, we can write
				$$\al=\frac{a}{t}+\mathcal{O}(1),\;a^{\st}=a,\;\be_i=\frac{b_i}{t}+\mathcal{O}(1),\mathrm{for\;i=1,2,3},
				$$
				then $t^{-2}$ order terms of \eqref{eq_decoupledNahm} becomes 
				\begin{equation}
				\begin{split}
				&b_1=2[a,b_1],\;b_2=2[a,b_2],\;b_3=2[a,b_3],\;[b_i,b_j]=0,\\
				&-a+[b_1,b_1^{\st}]+[b_2,b_2^{\st}]+[b_3,b_3^{\st}]=0.
				\label{eq_poleconditiont=0}
				\end{split}
				\end{equation}
				Note that the first system of the above equation is invariant by a constant multiplication on $b_i$. Suppose $av=\lam v$, where $v$ is an eigenvector and $\lam$ is the corresponding eigenvalue, then $a(b_iv)=(\lam+\frac{1}{2})b_iv$. 
				
				In addition, suppose the action of $b_1$ on an eigenvector $v$ generates $\mathbb{C}^k$ and $\Tr(a)=0$, we much have $av=(k-\frac{1}{4})v$ and under suitable gauge, we can write $a=a_0,\;b_1=b_0$. 
				We also requires that $b_2$ and $b_3$ acts on $v$ generates $\mathbb{C}^k$, by the condition $[b_i,b_j]=0$, we conclude that $b_2=\mu_2b_0$, $b_3=\mu_3b_0$, where $\mu_2,\mu_3$ are two complex numbers. Moreover, in order to satisfy the real equations, we need to unify the constants in front of $b_0$. Let $C=\sqrt{1+|\mu_2|^2+|\mu_3|^2}$, we write $g_C=\diag(1,C,\cdots,C^{k-1})$, then $g_Cb_0g_C^{-1}=Cb_0$. Therefore, we can set $s_1=\frac{1}{C}, s_2=\frac{\mu_2}{C}$ and $s_3=\frac{\mu_3}{C}$ and the leading term of the real equation of  \eqref{eq_poleconditiont=0} becomes 
				$$-a_0+(|s_1|^2+|s_2|^2+|s_3|^2)[b_0,b_0^{\st}],$$
				which vanishes as $a_0,b_0,b_0^{\st}$ are a principle $\slf_2$ triple.
			\end{remark}
			
			\begin{definition}
				A Nahm complex consist of $(\al,\be_1,\be_2,\be_3,v)$, where $\al,\be_1,\be_2,\be_3$ are $GL(k,\mathbb{C})$-valued functions over $(0,1)$ and $v\in \mathbb{C}^k$ a unit vector $(|v|=1)$ satisfy the followings:
				\begin{itemize}
					\item [(i)] $\al,\be_1,\be_2,\be_3$ satisfy the complex equations
					\begin{equation*}
					\begin{split}
					\frac{d\be_i}{dt}+2[\al,\be_i]=0,\;[\be_i,\be_j]=0,\;\mathrm{for\;i,j=1,2,3},
					\end{split}
					\end{equation*}
					\item [(ii)] $\al(1-t)=\al^{\intercal}(s),\;\be_i(1-t)=\be_i^{\intercal}(t)$,
					\item [(iii)] $\al,\be_1,\be_2,\be_3$ are smooth in $(0,1)$, meromorphic with simple poles at $t=0,1$, and residues $a,b_1,b_2,b_3$ at $t=0$,
					\item [(iv)] $\Tr(a)=0$ and $v$ is a vector in the $-(\frac{k-1}{4})$ eigenspace of $a$ such that the vectors $\{b_i^jv\}_{j=0}^{k-1}$ span $\mathbb{C}^k$ for $i=1,2,3$.
				\end{itemize}
			\end{definition}
			In the above definition, when $\be_2=\be_3=0$, these is exactly the condition that introduced by Donaldson \cite[Definition 1.21]{donaldson1984nahmclassification}. Suppose $s_1=1$, then $s_2=0$ and $s_3=0$, this is exactly the boundary condition that has been introduced by Witten \cite{witten2011fivebranes} for the Kapustin-Witten equations and studied in \cite{HeMazzeo2017}.
			
			From the previous discussion in the remark, we obtain
			\begin{corollary}
				\label{coro_complexgaugerightasymptotic}
				For any Nahm complex $(\al,\be_1,\be_2,\be_3,v)$, there exists $g\in\MGCC$ such that $g(\al,\be_1,\be_2,\be_3)$ satisfies $\mathrm{(iv)}$ of Assumption \ref{assump} and $|g(v)|=1$ is still an unit vector.
			\end{corollary}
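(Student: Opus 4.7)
\emph{Proof plan.} The strategy is to build a single constant complex-gauge element at $t=0$ that normalizes the residue data, and then extend it over $[0,1]$ compatibly with the involutive symmetry built into $\MGCC$. First I extract the algebraic structure at the pole. Because $a^{\st}=a$ and $[a,b_1]=b_1/2$ (the $t^{-2}$ order of the complex equation), the vectors $u_j := b_1^{j-1} v$ are eigenvectors of $a$ with eigenvalues $-(k-1)/4 + (j-1)/2$; these are distinct, so each $a$-eigenspace is one-dimensional and $\{u_j\}_{j=1}^{k}$ is a basis of $\mathbb{C}^k$. The commutation $[b_1, b_i]=0$ combined with $b_1$ being regular nilpotent with cyclic vector $v$ forces $b_i \in \mathbb{C}[b_1]$, and the eigenvalue-shift identity $[a,b_i]=b_i/2$ then kills all but the linear term in $b_1$: $b_i = \mu_i b_1$ for unique $\mu_i \in \mathbb{C}$, $i=2,3$.

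\emph{Constant gauge at the pole.} Define $g_0 \in GL(k,\mathbb{C})$ by $g_0 u_j = c_j^{-1} f_j$ for the standard basis $\{f_j\}$ of $\mathbb{C}^k$. The condition $g_0 a g_0^{-1} = a_0$ holds automatically since each $u_j$ lies in the matching eigenspace. The condition $g_0 b_1 g_0^{-1} = s_1 b_0$ unfolds to the recursion $c_{j+1} = c_j / (s_1 \gamma_j)$, which determines all $c_j$ once $c_1$ and $s_1$ are fixed. I would choose $c_1 := 1$ and $s_1 := (1 + |\mu_2|^2 + |\mu_3|^2)^{-1/2}$. Then $|g_0 v| = |f_1| = 1$, and $g_0 b_i g_0^{-1} = \mu_i \cdot g_0 b_1 g_0^{-1} = (\mu_i s_1) b_0 =: s_i b_0$ with $|s_1|^2 + |s_2|^2 + |s_3|^2 = 1$, realizing Assumption \ref{assump}(iv) at $t=0$.

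\emph{Extension to $\MGCC$ and main obstacle.} The symmetry requirement $g(1-t) = (g^{\intercal}(t))^{-1}$ forces $g(1) = (g_0^{\intercal})^{-1}$. I would prescribe $g$ to equal $g_0$ near $t=0$, equal $(g_0^{\intercal})^{-1}$ near $t=1$, and interpolate in the middle by any smooth $GL(k,\mathbb{C})$-valued path consistent with the symmetry; such a path exists by the connectedness of $GL(k,\mathbb{C})$. Because the action $g(\al) = g\al g^{-1} - \tfrac12 g' g^{-1}$ only contributes an $O(1)$ term from $g'$, the residue of $g(\al)$ at $t=0$ is exactly $g_0 a g_0^{-1} = a_0$, and the residues of $g(\be_i)$ are $s_i b_0$; the symmetry then handles the mirror calculation at $t=1$. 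The gauge moves $v$ to $g_0 v$, of unit norm as required. The main hurdle is the algebraic reduction $b_i = \mu_i b_1$ in Step 1, which depends simultaneously on the regular-nilpotent centralizer description and on the precise $1/2$-eigenvalue-shift under $a$; once this is in hand, the rest is linear algebra plus a soft existence argument for the symmetric smooth extension.
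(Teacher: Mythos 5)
Your argument is correct and follows essentially the same route as the paper, which proves this corollary simply by pointing back to the Remark preceding Assumption~\ref{assump}: read off the $t^{-2}$ coefficients of the complex equation to get $[a,b_i]=b_i/2$ and $[b_i,b_j]=0$, deduce $b_i=\mu_i b_1$ from commutativity with the regular nilpotent $b_1$, and absorb the normalization $|s_1|^2+|s_2|^2+|s_3|^2=1$ into a diagonal conjugation. You supply several details the paper leaves implicit (the centralizer argument for $b_i\in\mathbb{C}[b_1]$, the unit-norm condition on $v$, and the extension of the constant gauge at $t=0$ to an element of $\MGCC$ obeying $g(1-t)=g^{\intercal}(t)^{-1}$); the only slip is invoking $a^{\st}=a$, which is neither part of the definition of a Nahm complex nor needed, since $av=-\tfrac{k-1}{4}v$ is already assumed there.
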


			Even though the asymptotic behaviors of $\be_1,\be_2,\be_3$ are the same, it doesn't means that they are complex gauge equivalent. To see this, we give the following example of the Nahm complexes and we will see later a classification of all Nahm complexes.
			
			\begin{example}
				Let $q_1,q_2,q_3$ be three different real numbers, let $B_i=\begin{pmatrix}
				q_i& 1\\
				1 & q_i+1
				\end{pmatrix}$. We choose any function $f(t)$ such that $f(1-t)=f(t)^{-1}$, $f(t)=t^{-1}$ near $t=0$ and $f(t)=1$ near $t=\frac{1}{2}$, we define the complex gauge transform $p(t)=\diag(f(t)^{\frac12},f(t)^{-\frac 12})$. 
				
				We define $\al:=\frac{1}{2}p^{-1}\frac{dp}{dt}$ and $\be_i:=p(t)^{-1}B_ip(t)$, then near $t=0$, we have 
				\begin{equation*}
				\begin{split}
				\al&=\frac{1}{4}\begin{pmatrix}
				\log f & \\
				& -\log f
				\end{pmatrix}=\frac{1}{t}\begin{pmatrix}
				-\frac{1}{4} & \\
				0 & \frac{1}{4} 
				\end{pmatrix}+\mathcal{O}(1),\\
				\be_i&=\begin{pmatrix}
				q_i & f^{-1}\\
				f & q_i+1
				\end{pmatrix}=\frac{1}{t}\begin{pmatrix}
				0 & 0\\
				1 & 0
				\end{pmatrix}+\mathcal{O}(1)
				\end{split}
				\end{equation*}
				
				In addition, as $\Tr(\be_i)=\Tr(B_i)=2q_i+1$, and $q_i\neq q_j$, it is impossible that $\be_i$ is complex gauge equivalent to $\be_j$ for any $i\neq j$.
			\end{example}
			
			\begin{definition}
				Two Nahm complexes $(\al,\be_1,\be_2,\be_3,v)$, $(\al',\be_1',\be_2',\be_3',v')$ are equivalent if there is a continuous map $g:[0,1]\to GL(k,\mathbb{C})$ smooth in the interior, such that
				\begin{itemize}
					\item [i)] $g(\al,\be_1,\be_2,\be_3)=(\al',\be_1',\be_2',\be_3')$ in $(0,1)$,
					\item [ii)] $g(1-t)=g^{\intercal}(t)^{-1}$,
					\item [iii)] $g(0)v=v'$.
				\end{itemize}
			\end{definition}
			We write $\MMnc$ be the moduli space of Nahm complexes quotient by equivalent relationship in the above definition. Similarly, we can define a map between $\MMdp$ and $\MMnc$ by forgetting the real equations of \eqref{eq_decoupledNahm} 
			\begin{equation*}
			\Xi:\MMdp\to\MMnc.
			\end{equation*}
			We first prove $\Xi$ is injective.
			\begin{proposition}
				\label{prop_poleinjective}
				Let $X=(X_0,X_1,\cdots,X_7),\;X'=(X_0',X_1',\cdots,X_7')\in\MMdp$, we write $\Xi(X)=(\al,\be_1,\be_2,\be_3,v)$ and $\Xi(X')=(\al',\be'_1,\be'_2,\be'_3,v')$. Suppose there exists $g\in \MGCC$ such that $g\Xi(X)=\Xi(X')$, then $X$ and $X'$ are unitary gauge equivalent.  
			\end{proposition}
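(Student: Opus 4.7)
The plan is to deduce injectivity of $\Xi$ by showing that any complex gauge transformation $g \in \MGCC$ carrying $\Xi(X)$ to $\Xi(X')$ is automatically unitary (i.e., lies in $\MG$), via a convexity argument for $h = g^{\st}g$ combined with a careful analysis of the boundary behavior at $t=0,1$.

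First, using Corollary \ref{coro_complexgaugerightasymptotic} (and the pole-structure part of Assumption \ref{assump}(iv)), I would normalize $X$ and $X'$ by elements of $\MG$ so that both sets of residues at $t=0$ take the standard form $a = a' = a_0$, $\be_i$ has residue $s_i b_0$ and $\be_i'$ has residue $s_i' b_0$, with $\sum |s_i|^2 = \sum |s_i'|^2 = 1$. The Nahm-complex cyclicity condition forces every $s_i, s_i'$ to be nonzero. Next, since $X$ and $X'$ both satisfy the real equation $\hF = 0$ and $X' = g\cdot X$ as complex Nahm data, identity (\ref{eq_indentityuniquenesslemma}) reduces to $-2\bar{d}_\al(h^{-1}d_\al h) - 2\sum_{i=1}^3 \bar{d}_{\be_i}(h^{-1}d_{\be_i}h) = 0$; taking traces exactly as in Lemma \ref{lem_unique} then yields
\begin{equation*}
\frac{d^2}{dt^2}\sigma(h) \geq 0 \quad \text{on } (0,1), \qquad \sigma(h) := \Tr(h) + \Tr(h^{-1}) - 2 \geq 0.
\end{equation*}

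The heart of the proof is verifying the boundary condition $\sigma(h)(0) = \sigma(h)(1) = 0$. Since $g$ is continuous on $[0,1]$, comparing residues of $\al' = g\al g^{-1} - \tfrac{1}{2}(dg/dt)g^{-1}$ with those of $\al$ gives $g(0) a_0 g(0)^{-1} = a_0$, so $g(0)$ commutes with $a_0$; because $a_0$ has distinct eigenvalues, $g(0) = \mathrm{diag}(\delta_1,\ldots,\delta_k)$. From $g(0)(s_i b_0) g(0)^{-1} = s_i' b_0$ and the explicit subdiagonal entries $\gamma_j$ of $b_0$, I read off $\delta_{j+1}/\delta_j = s_i'/s_i =: \mu$, a ratio independent of $i$; then $\sum |s_i'|^2 = |\mu|^2 \sum |s_i|^2$ forces $|\mu| = 1$. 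The matching condition $g(0)v = v'$, with both $v$ and $v'$ unit vectors in the one-dimensional lowest-weight space $\mathrm{span}\{e_1\}$ of $a_0$, yields $|\delta_1| = 1$, and hence $|\delta_j| = |\mu|^{j-1}|\delta_1| = 1$ for all $j$. Therefore $g(0) \in U(k)$ and $h(0) = \Id$. The symmetry condition $g(1-t) = g(t)^{-\intercal}$ in the Nahm-complex equivalence then gives $g(1) = g(0)^{-\intercal} \in U(k)$ and $h(1) = \Id$.

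With these endpoint values in hand, $\sigma(h)$ is a nonnegative continuous function on $[0,1]$, smooth and convex on the interior, vanishing at both endpoints; the elementary fact that a convex function lies below its chord forces $\sigma(h) \equiv 0$, hence $h \equiv \Id$ on $[0,1]$. Thus $g(t) \in U(k)$ pointwise and $g \in \MG$, so $X$ and $X'$ are real-gauge equivalent, proving injectivity. The main obstacle I anticipate is precisely the boundary step: one must extract from the weak assumption that $g$ is only continuous up to the boundary enough information about $g(0)$ — via the interaction of the principal $\slf_2$-triple normalization, the residue matching on all three $\be_i$'s simultaneously, and the unit-vector condition — to conclude that $g(0)$ is unitary. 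Once this is secured, the convexity of $\sigma(h)$ does the rest.
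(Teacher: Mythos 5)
Your proposal is correct and follows essentially the same route as the paper: establish $h(0)=h(1)=\Id$ from the pole normalization at the endpoints, then apply the convexity of $\sigma(h)$ (Proposition \ref{prop_uniqueness} in the paper) to force $h\equiv\Id$. The only cosmetic difference is that where you compute the diagonal entries of $g(0)$ explicitly from the residues of $a_0$, $s_ib_0$ and the unit-vector condition, the paper reaches the same conclusion by invoking the representation theory of the principal $\slf_2$-triple.
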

			\begin{proof}
				By Assumption \ref{assump} $(iv)$, there exists $g_0\in \MG$ that that $|g_0(X)-X'|=\mathcal{O}(1)$. We take $\tilde{g}:=gg_0^{-1}$, then at $t=0$, we have $\tilde{g}^{-1}a_0\tilde{g}=a_0,\;\tilde{g}^{-1}b_0\tilde{g}=b_0,\;|\tilde{g}v|=1.$ Therefore, by the representation theory of $\slf_2$ at $t=0$, $\tilde{g}=1$ thus $g|_{t=0}=g_0|_{t=0}$ which is unitary. In addition, as $g(1-t)=g^{\intercal}(t)^{-1}$, we define $h:=g^{\st}g$, then $h|_{t=0}=h|_{t=1}=\Id$. By Proposition \ref{prop_uniqueness}, we obtain $h=\Id$ over $(0,1)$, which implies that $g$ is unitary. 
			\end{proof}
			
		\end{subsection}
		
		\begin{subsection}{Existence of Solutions}
			
			\begin{lemma}
				\label{lemma_goodasympto}
				Suppose $(\al,\be_1,\be_2,\be_3,v)$ be a Nahm complex, then there is an equivalent Nahm complex $(\al',\be_1',\be_2',\be_3')$ such that
				\begin{itemize}
					\item [(i)] $|\al-\al'^{\st}|$ is bounded in $[0,1]$,
					\item [(ii)] $|v'|=1$,
					\item [(iii)] The residue of $\al'$ at $t=0$ is $a_0$ and the residues of $\be_1',\be_2',\be_3'$ are $s_1b_0,s_2b_0,s_3b_0$, where $s_1,s_2,s_3\in \mathbb{C}$ with $|s_1|^2+|s_2|^2+|s_3|^2=1,$
					\item [(iv)] $\hF(\al',\be_1',\be_2',\be_3')$ is bounded in $[0,1]$.
				\end{itemize}
			\end{lemma}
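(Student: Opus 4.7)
The plan is to apply a sequence of complex gauge transformations, compatible with the symmetry $g(1-t)=g^{\intercal}(t)^{-1}$, to bring the given Nahm complex into a normal form near each endpoint. I would first invoke Corollary \ref{coro_complexgaugerightasymptotic} to normalize the residues at $t=0$ to $(a_0, s_1 b_0, s_2 b_0, s_3 b_0)$ with $|s_1|^2 + |s_2|^2 + |s_3|^2 = 1$ and arrange $|v'|=1$; the symmetry condition propagates the normalization to $t=1$. This secures parts (ii) and (iii). For (i), since $a_0$ is Hermitian, the principal $t^{-1}$ parts of $\alpha'$ and $(\alpha')^{\star}$ coincide, so their difference is automatically bounded near both endpoints.

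For (iv), I would introduce the explicit model
\begin{equation*}
\alpha_{\mathrm{mod}} := \frac{a_0}{t}, \qquad \beta_{i,\mathrm{mod}} := \frac{s_i b_0}{t}
\end{equation*}
near $t=0$. A direct computation using the principal $\slf_2$ relations $[a_0, b_0] = b_0/2$ and $[b_0, b_0^{\star}] = a_0$ together with $\sum |s_i|^2 = 1$ shows that $(\alpha_{\mathrm{mod}}, \beta_{i,\mathrm{mod}})$ is an \emph{exact} solution of the full decoupled octonionic Nahm's equations \eqref{eq_decoupledNahm} in a punctured neighborhood of $t=0$. Writing $\alpha' = \alpha_{\mathrm{mod}} + \alpha^{\flat}$ and $\beta_i' = \beta_{i,\mathrm{mod}} + \beta_i^{\flat}$ with $\alpha^{\flat}, \beta_i^{\flat}$ bounded, the $t^{-2}$ contributions to $\hF(\alpha', \beta')$ cancel identically, and the remaining potentially unbounded contribution reduces to a $t^{-1}$ term whose coefficient is linear in the boundary values of $\alpha^{\flat}, \beta_i^{\flat}$ and their adjoints, namely
\begin{equation*}
\bigl[a_0,\,(\alpha^{\flat})^{\star}(0)-\alpha^{\flat}(0)\bigr]+\sum_{i=1}^{3}\Bigl(s_i\bigl[b_0,\,(\beta_i^{\flat})^{\star}(0)\bigr]+\bar{s}_i\bigl[\beta_i^{\flat}(0),\,b_0^{\star}\bigr]\Bigr).
\end{equation*}

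To eliminate this residual singularity, I would apply a further complex gauge transformation of the form $g=\exp(tX)$ with $X \in \mathfrak{gl}(k,\mathbb{C})$ chosen so that the induced shift of the first-order Taylor data of $(\alpha', \beta_i')$ cancels the bad coefficient; such a transformation leaves the residues untouched. Solvability of this algebraic equation for $X$ is the essential point: the obstruction must lie in the image of the appropriate linearized gauge action on $\mathfrak{gl}(k,\mathbb{C})$, which reduces to a representation-theoretic statement about the action of the principal $\slf_2$-triple $(a_0, b_0, b_0^{\star})$.

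The hard part will be verifying this solvability step, which combines the weight decomposition of $\mathfrak{gl}(k,\mathbb{C})$ under the principal $\slf_2$-triple with the constraints on $\alpha^{\flat}(0)$ and $\beta_i^{\flat}(0)$ imposed by the complex equation $\frac{d\beta_i'}{dt} + 2[\alpha', \beta_i']=0$ at sub-leading order. Once carried out at $t=0$, the symmetry $g(1-t)=g^{\intercal}(t)^{-1}$ yields the analogous correction at $t=1$, and a cutoff function that equals each local correction near its endpoint and the identity on the interior assembles both into a single global element of $\MGCC$. The resulting Nahm complex satisfies (i)--(iv).
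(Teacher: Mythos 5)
Your strategy matches the paper's in outline: (i)--(iii) via Corollary \ref{coro_complexgaugerightasymptotic}, and for (iv) the observation that the $t^{-2}$ part of $\hF$ cancels by the principal $\slf_2$ relations, leaving a $t^{-1}$ residue to be removed by a gauge transformation of the form $1+\tfrac{\chi}{2}t+\cdots$ (your $\exp(tX)$). The problem is that the step you explicitly defer --- solvability of the linear equation for the correction --- is the entire substance of the proof of (iv), so as written the argument has a genuine gap rather than a routine verification left to the reader. The missing idea is concrete: under such a gauge transformation the residue of $\hF$ shifts by the operator
\begin{equation*}
P\chi:=4\bigl([b_0,[b_0^{\st},\chi]]+[a_0,[a_0,\chi]]\bigr)-2[a_0,\chi],
\end{equation*}
which is the Casimir operator of the principal $\slf_2$-triple $(a_0,b_0,b_0^{\st})$ acting on $\mathfrak{gl}(k,\mathbb{C})\cong W^{k-1}\otimes W^{k-1}\cong W^{2k-2}\oplus\cdots\oplus W^{0}$, with $P|_{W^l}=\tfrac{l(l+2)}{2}$. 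Hence $P$ is invertible on every summand except $W^{0}$ (the scalars), and the sole obstruction to cancelling the residue $\Delta$ is its $W^{0}$-component, i.e.\ $\Tr(\Delta)$.

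You should also note that the closing step is much simpler than your plan suggests: you propose to extract $\Tr(\Delta)=0$ from the sub-leading constraints of the complex equation on $\alpha^{\flat}(0)$, $\beta_i^{\flat}(0)$, but no such analysis is needed. Every singular term of $\hF(\al',\be_1',\be_2',\be_3')$ is a commutator (of $\al'$ with $(\al')^{\st}$ or of $\be_i'$ with $(\be_i')^{\st}$, the derivative term being regular once (i) holds), so $\Tr(\Delta)=0$ automatically and the equation $\Delta'=0$ is always solvable for $\chi=\chi^{\st}$. With the Casimir identification and this trace observation supplied, your argument closes and coincides with the paper's; without them it does not establish (iv).
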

			\begin{proof}
				(i), (ii), (iii) follow directly by the definition and Corollary \ref{coro_complexgaugerightasymptotic}. Near a neighborhood of $t=0$, we can write 
				$$\hF(\al,\be_1,\be_2,\be_3)=\frac{1}{s^2}(-(a_0+a_0^{\st})+2([a_0,a_0^{\st}]+[b_0,b_0^{\st}]))+\frac{1}{s}\Delta+\mathcal{O}(1).$$
				
				Now, we consider a complex gauge transform 
				$g=1+\frac{\chi}{2}t+\cdots,\;\;\mathrm{with}\;\chi=\chi^{\st},
				$ and write $(\al',\be_1',\be_2',\be_3')=g(\al,\be_1,\be_2,\be_3)$. We write $h=g^{\st}g$, then we have
				$$
				g^{-1}\hF(\al',\be_1',\be_2',\be_3')g=\hF(\al,\be_1,\be_2,\be_3)-2(\bar{d}_{\al}(h^{-1}d_{\al}h)+\sum_{i=1}^3\bar{d}_{\be_i}(h^{-1}d_{\be_i}h)).
				$$
				Let $\Delta'$ be the residue of $\hF(\al',\be_1',\be_2',\be_3')$ at $t=0$ transforms by
				$$
				\Delta'=\Delta+2([b_0,[b_0^{\st},\chi]]+[a_0,[a_0,\chi]])-[a_0,\chi].
				$$
				Let $W^{k-1}$ be the $k$ dimensional irreducible representation of $\slf_2$. We define $$P\chi:=4([b_0,[b_0^{\st},\chi]]+[a_0,[a_0,\chi]])-2[a_0,\chi],$$ then this is the Casimir operator on the representation 
				$$
				W^{k-1}\otimes W^{k-1}\cong W^{2k-2}\oplus \cdots \oplus W^0,
				$$ 
				and w.r.t the decomposition, $P|_{W^l}=\frac{l(l+2)}{2}$. Therefore, we could solve the equation $\Delta'=0$ only if when the $W^0$ component of $\Delta$ vanishes, which is $\Tr(\Delta)=0$. As the $\mathcal{O}(-1)$ terms of $\hF(\al,\be_1,\be_2,\be_3)$ all comes from the communicators, we have $\Tr(\Delta)=0$, thus, we can always solve the equations. 
			\end{proof}
			
			\begin{lemma}
				For each $0<\ep<\frac{1}{4}$, $over (\ep,1-\ep)$, if $(\al,\be_1,\be_2,\be_3)$ is a Nahm complex, there exists an unique complex gauge transform $g_{\ep}\in \MGCC$ satisfies  $g_{\ep}^{\st}g_{\ep}|_{\ep}=(g_{\ep}^{\st}g_{\ep})|_{1-\ep}=1$ and $g_{\ep}(1-t)=g_{\ep}^{\intercal}(t)$ such that
				$g_{\ep}(\al,\be_1,\be_2,\be_3)$ is a solution to \eqref{eq_decoupledNahm}.
			\end{lemma}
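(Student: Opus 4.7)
The plan is to reduce the existence and uniqueness to Proposition \ref{prop_existence} and Proposition \ref{prop_uniqueness} applied on the compact subinterval $[\ep, 1-\ep]$, and then extract the involution symmetry from uniqueness.

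First, since the complex equations $\frac{d\be_i}{dt}+2[\al,\be_i]=0$ and $[\be_i,\be_j]=0$ hold on $(\ep, 1-\ep)$, solve the ODE $\frac{dg_0}{dt} = g_0 \al$ with $g_0(\ep) = \Id$ to obtain $g_0 \in \MGCC$ on $[\ep,1-\ep]$ with $g_0(\al) = 0$ and $g_0(\be_i) = t_i$, where $t_i \in \mfgc$ are constant commuting elements. The problem then reduces to finding $g_1 \in \MGCC$ sending $(0, t_1, t_2, t_3)$ to a solution of the real equation with $g_1^\st g_1|_\ep = \Id$ and $g_1^\st g_1|_{1-\ep} = (g_0 g_0^\st)^{-1}|_{1-\ep}$; then $g_\ep := g_1 g_0$ has the required Dirichlet data $g_\ep^\st g_\ep|_\ep = g_\ep^\st g_\ep|_{1-\ep} = \Id$.

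Next, apply the variational method from Proposition \ref{prop_existence}. The Lagrangian $\ML(h) = \int_\ep^{1-\ep} \tfrac{1}{2}|h^{-1}\tfrac{dh}{dt}|^2_{\UH} + \sum_{i=1}^3 \Tr(t_i h^{-1} t_i^\st h) \, dt$ on paths $h : [\ep, 1-\ep] \to \UH = \GC/G$ with fixed endpoint values is geodesically convex with nonnegative potential; since the interval is compact and the boundary data are fixed, a standard coercivity argument produces a minimizer, smooth in the interior, which satisfies $\hF = 0$ by Proposition \ref{prop_unique}. Write $h = g_1^\st g_1$ for an appropriate $g_1$ (normalized e.g.\ by $g_1(\ep) = \Id$). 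Uniqueness of $g_\ep^\st g_\ep$ for any two candidates then follows directly from Proposition \ref{prop_uniqueness}, since both satisfy the real equation with the same Dirichlet data at $\ep$ and $1-\ep$.

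The symmetry $g_\ep(1-t) = g_\ep^{\intercal}(t)^{-1}$ comes from uniqueness. Define $\wt g(t) := (g_\ep^{\intercal}(1-t))^{-1}$; using the Nahm-complex symmetries $\al(1-s) = \al^{\intercal}(s)$ and $\be_i(1-s) = \be_i^{\intercal}(s)$, a direct computation shows that $\wt g$ also gauges $(\al, \be_1, \be_2, \be_3)$ to a solution of the real equation on $(\ep, 1-\ep)$, and that $\wt g^\st \wt g$ satisfies the same Dirichlet data as $g_\ep^\st g_\ep$ at the two endpoints. Proposition \ref{prop_uniqueness} gives $\wt g^\st \wt g = g_\ep^\st g_\ep$, and fixing the normalization (say at $t = \ep$) forces $\wt g = g_\ep$, yielding the required involution. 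The main obstacle will be formalizing the variational argument with the prescribed Dirichlet boundary data — specifically, verifying that minimizing sequences do not escape to infinity in $\UH$. On the compact interval $[\ep, 1-\ep]$ this follows from geodesic convexity of the potential and compactness of the domain, so no delicate asymptotic analysis near the poles is required at this stage (that enters only in subsequent lemmas treating the full pole boundary problem).
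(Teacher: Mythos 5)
Your proposal is correct and follows essentially the same route as the paper, whose own proof is just a citation of Proposition \ref{prop_existence} (variational existence with prescribed Dirichlet data for $h=g^{\st}g$ on a compact interval) and the uniqueness statement; your reduction to constant $\be_i$ via $g_0$ and the minimization of $\ML$ over paths in $\UH$ with fixed endpoints is exactly the argument behind Proposition \ref{prop_existence}. The one step that needs tightening is the reflection symmetry: Proposition \ref{prop_uniqueness} only pins down $h=g_\ep^{\st}g_\ep$, not $g_\ep$ itself (the residual freedom is left multiplication by a unitary gauge transformation), so ``fixing the normalization at $t=\ep$'' does not by itself force $\wt g=g_\ep$. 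The comparison of $\wt g(t)=(g_\ep^{\intercal}(1-t))^{-1}$ with $g_\ep$ should instead be run at the level of $h$: uniqueness gives $h(1-t)=(h^{\intercal}(t))^{-1}$, and one then chooses the representative $g_\ep=h^{1/2}g_0$ (suitably arranged), for which the identity passes to the square root and yields $g_\ep(1-t)=(g_\ep^{\intercal}(t))^{-1}$ --- note this is the convention used for $\MGCC$ in Assumption \ref{assump}(iii), so the absence of the inverse in the lemma's statement appears to be a typo rather than a defect of your argument.
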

			\begin{proof}
				It follows directly by Proposition \ref{prop_existence} and Proposition \ref{prop_unique}.
			\end{proof}
			
			\begin{proposition}
				\label{prop_poleexistence}
				Let $(\al,\be_1,\be_2,\be_3)$ be a Nahm complex such that $|\hF(\al,\be_1,\be_2,\be_3)|\leq C$ over $[0,1]$, let $h_{\ep}=g_{\ep}^{\st}g_{\ep}$ be a solution over $[\ep,1-\ep]$ with $h_{\ep}|_{\ep}=h_{\ep}|_{1-\ep}=1$, then the following holds
				\begin{itemize}
					\item [a)] there exists a uniform constant $C$ such that $|h_{\ep}|\leq C$, $|h_{\ep}^{-1}|\leq C$ and $|h_{\ep}(t)-1|\leq Ct$.
					\item [b)] there exists continuous map $h:[0,1]\to \mathcal{H}$ such that over $(0,1)$, $h_{\ep}$ convergence smoothly to $h$. In addition, the following holds for $h$
					\begin{itemize}
						\item [i)] $h(1-t)=h^{\intercal}(s)^{-1}$, $h(0)=h(1)=1$ and $|h(t)-\Id|\leq Ct$,
						\item [ii)] for any continuous $g$ with $g^{\st}g=h$, $g(\al,\be_1,\be_2,\be_3)$ satisfies the real equation of \eqref{eq_decoupledNahm}.
					\end{itemize}   
				\end{itemize}
			\end{proposition}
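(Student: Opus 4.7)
The plan is to adapt Donaldson's strategy for the classical Nahm case, combining the convexity inequality of Lemma \ref{lem_unique} with barrier comparison, ODE bootstrap, and a limiting argument as $\ep\to 0$.

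For (a), the uniform $L^\infty$ bounds follow from Lemma \ref{lem_unique} applied to $h_\ep = g_\ep^\st g_\ep$: since $g_\ep$ solves the real equation on $[\ep,1-\ep]$ by construction, $\hF(g_\ep(\al,\be_1,\be_2,\be_3))=0$, so
$$
\frac{d^2}{dt^2}\sigma(h_\ep) \ge -\bigl(|\hF(\al,\be_1,\be_2,\be_3)|+0\bigr) \ge -C.
$$
Since $\sigma(h_\ep)\ge 0$ and vanishes at the two endpoints, comparison with the explicit barrier $\phi_\ep(t)=\tfrac{C}{2}(t-\ep)(1-\ep-t)$, which satisfies $\phi_\ep''=-C$ and zero boundary values, yields $\sigma(h_\ep)\le \phi_\ep \le C/8$ uniformly in $\ep$. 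Since $\sigma(h)=\sum_i(\lambda_i+\lambda_i^{-1}-2)$ and $\lambda+\lambda^{-1}-2$ blows up as $\lambda\to 0$ or $\infty$, a uniform bound on $\sigma$ confines the eigenvalues of $h_\ep$ to a compact subset of $(0,\infty)$, giving $|h_\ep|,|h_\ep^{-1}|\le C$.

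To upgrade the naive bound $|h_\ep-1|\le C\sqrt{\sigma(h_\ep)}=O(\sqrt{t-\ep})$ to the linear estimate $|h_\ep(t)-1|\le Ct$, I would use the second-order ODE that $h_\ep$ satisfies. Rewriting $\hF(g_\ep(\al,\be))=0$ via the identity in Lemma \ref{lem_unique} gives
$$
2\bar d_\al(h_\ep^{-1}d_\al h_\ep) + 2\sum_{i=1}^3 \bar d_{\be_i}(h_\ep^{-1}d_{\be_i}h_\ep) = \hF(\al,\be_1,\be_2,\be_3),
$$
whose principal part is $\tfrac{1}{2}h_\ep''$ and whose lower-order terms are controlled by $|\al|,|\be_i|,|h_\ep|,|h_\ep^{-1}|$ (bounded by Step 1 and Lemma \ref{lemma_goodasympto}) together with the bounded source $\hF(\al,\be)$. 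A Gronwall argument starting from $h_\ep(\ep)=1$ produces a uniform bound on $|h_\ep'|$ on $[\ep,\tfrac12]$, and integration yields $|h_\ep(t)-1|\le C(t-\ep)\le Ct$ near $t=0$; the bound near $t=1$ follows from the symmetry $h_\ep(1-t)=h_\ep^T(t)^{-1}$.

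For (b), the uniform $C^0$, $C^1$ bounds of Step 1--2 upgrade via the ODE to uniform $C^k$ bounds on every compact subinterval of $(0,1)$, so Arzela-Ascoli extracts a subsequence $h_{\ep_j}$ converging smoothly on compact sets to some $h:(0,1)\to\mathcal{H}$, and the uniqueness of solutions with fixed boundary data (Proposition \ref{prop_uniqueness}, applied locally on $[\delta,1-\delta]$) promotes this to convergence of the full family. The linear bound passes to the limit, giving $|h(t)-1|\le Ct$ and in particular continuous extension with $h(0)=1$; the symmetry condition then forces $h(1)=1$ and $h(1-t)=h^T(t)^{-1}$. Finally, for any continuous $g$ with $g^\st g=h$, the approximation $g_{\ep_j}\to g$ (after unitary adjustment) together with continuity of $\hF$ in its arguments passes the real equation $\hF(g_\ep(\al,\be))=0$ to the limit. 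The main obstacle I foresee is Step 2: the $\sigma$-inequality alone only gives a $\sqrt{t}$-bound, which is insufficient to identify the correct linear boundary rate, so one must carefully couple the convexity estimate with ODE analysis of the real-equation residual uniformly in $\ep$, and ensure that the $h_\ep'(\ep)$ bound does not deteriorate as $\ep\to 0$.
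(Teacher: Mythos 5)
Your treatment of the uniform bounds in a) and of the limit in b) is essentially the paper's own argument: Lemma \ref{lem_unique} with $\hF'=0$ gives $\frac{d^2}{dt^2}\sigma(h_{\ep})\geq -C$, comparison with the quadratic barrier $(t-\ep)(1-\ep-t)$ vanishing at the endpoints gives a uniform bound $\sigma(h_{\ep})\leq C(t-\ep)(1-t-\ep)$, and compactness of the sublevel sets $\{h\,|\,\sigma(h)\leq C\}$ plus a diagonal argument and interior regularity produce the smooth limit on $(0,1)$. That part matches.

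The genuine problem is your Step 2. You claim the lower-order terms of the second-order equation for $h_{\ep}$ are controlled because $|\al|,|\be_i|$ are ``bounded by Step 1 and Lemma \ref{lemma_goodasympto}''; but $\al$ and $\be_i$ have simple poles at $t=0,1$ (that is the whole point of this section), and Lemma \ref{lemma_goodasympto} only bounds $\al-\al^{\st}$ and $\hF$, not $\al$ or $\be_i$ themselves. With only the information the barrier provides, namely $\sigma(h_{\ep})\leq Ct$ and hence $|h_{\ep}-1|\leq C\sqrt{t}$, terms such as $[\al,h_{\ep}^{-1}[\al^{\st},h_{\ep}]]$ and $h_{\ep}^{-1}[\frac{d\al^{\st}}{dt},h_{\ep}]$ are only $\mathcal{O}(t^{-3/2})$ and the coefficient multiplying $|h_{\ep}'|$ is $\mathcal{O}(t^{-1})$, so the Gronwall integral from $t=\ep$ diverges as $\ep\to 0$ (and, as you note, $h_{\ep}'(\ep)$ is not controlled a priori). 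Your instinct that the $\sigma$-inequality alone does not give the linear rate is correct --- the paper's displayed proof in fact only derives the square-root rate at this stage --- but the repair is not a global Gronwall. The paper obtains the derivative bound separately, in Lemma \ref{prop_polerightbehavior}, via Donaldson's localized comparison: one estimates the oscillation of $h^{-1}\frac{dh}{dt}$ over the rescaled intervals $[\frac{1}{2}t_0,\frac{3}{2}t_0]$ and compares with the difference quotient $\frac{h(\frac{3}{2}t_0)-h(\frac{1}{2}t_0)}{t_0}$, which yields $|\frac{dh}{dt}|\leq C$ and then $|h-1|\leq Ct$ by integrating from $h(0)=1$. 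You should replace your Step 2 by that localized argument, run uniformly in $\ep$.
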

			\begin{proof}
				We write $\sigma_{\ep}:=\Tr(h_{\ep})+\Tr(h_{\ep})^{-1}-2$, then by Lemma \ref{lem_unique}, we obtain 
				\begin{equation}
				\frac{d^2}{dt^2}\sigma_{\ep}\geq -2|\hF(\al,\be_1,\be_2,\be_3)|\geq -2C,
				\end{equation}
				where the last inequality is by our assumption. In addition, as $h_{\ep}|_{t=\ep,\;t=1-\ep}=1$, we obtain $\sigma_{\ep}|_{t=\ep,\;t=1-\ep}=0$. Let $\eta_{\ep}(t)=C(t-\ep)(1-\ep-t)$, then we have $\frac{d^2}{dt^2}(\eta_{\ep}-\sigma)\leq 0$ and $(\eta_{\ep}-\sigma)|_{t=\ep,1-\ep}=0$. 
				
				Therefore, by maximal principal, $$\sigma_{\ep}\leq C(t-\ep)(1-t-\ep)\leq Ct(2-t)+2.$$
				
				As the subspace $\{h|\sigma(h)\leq C\}$ is compact \cite[
				Page 93]{Donaldsonboundary}, by a diagonal argument, we obtain $h_{\ep}$ convergence to an $h$ over $[0,1]$. In addition, by the regularity of $h_{\ep}$, we conclude that this convergence is smooth over $(0,1)$.
			\end{proof}
			
			We still need to show that the behavior near the poles are as expected. We set $g=h^{\frac{1}{2}}$ and write $(\al',\be_1',\be_2',\be_3')=g(\al,\be_1,\be_2,\be_3)$, then by definition,
			$$
			\al'=g\al g^{-1}-\frac12\frac{dg}{dt}g^{-1},\;\be_1'=g\be_1g^{-1},\;\be_2'=g\be_2g^{-1},\;\be_3'=g\be_3g^{-1}.
			$$ 
			As $\lim_{t\to 0}h=\Id$, we still need to show that $\frac{dg}{dt}$ would not contribute any new poles. We using the following argument similar to Donaldson \cite[Lemma 2.20]{donaldson1984nahmclassification}.
			\begin{lemma}
				\label{prop_polerightbehavior}
				$|\frac{dg}{dt}|\leq C$ over $(0,1)$.
			\end{lemma}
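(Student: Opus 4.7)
Since $h$ and $h^{-1}$ are uniformly bounded on $[0,1]$ by Proposition \ref{prop_poleexistence}(a), and since $g=h^{1/2}$, it suffices to bound $|dh/dt|$ uniformly on $(0,1)$. On compact subsets of $(0,1)$ the convergence $h_{\ep}\to h$ is smooth, so the only difficulty is the behavior near the two endpoints. The symmetry $h(1-t)=h^{\intercal}(t)^{-1}$ reduces matters to a neighborhood of $t=0$.

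The plan is to derive an effective second-order ODE for $h$ and analyze it as a regular-singular equation at $t=0$. Because the gauge transform $g$ solves $\hF(\al',\be_1',\be_2',\be_3')=0$, the identity \eqref{eq_indentityuniquenesslemma} yields
\begin{equation*}
\hF(\al,\be_1,\be_2,\be_3)=2\bar{d}_{\al}(h^{-1}d_{\al}h)+2\sum_{i=1}^{3}\bar{d}_{\be_i}(h^{-1}d_{\be_i}h).
\end{equation*}
After replacing the initial Nahm complex by the representative furnished by Lemma \ref{lemma_goodasympto}, the left-hand side is bounded on $[0,1]$ and the residues of $\al,\be_i$ at $t=0$ are the principal $\slf_2$-triple data $a_0,s_1b_0,s_2b_0,s_3b_0$. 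The right-hand side then unfolds into an equation of the form $\tfrac{d^2 h}{dt^2}=R(t,h,dh/dt)$ in which the most singular coefficient, of order $t^{-2}$, is built from commutators with the $\slf_2$-triple.

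Write $h=\Id+\xi$ with $\xi(0)=0$ and $|\xi(t)|\leq Ct$ by Proposition \ref{prop_poleexistence}(a). Linearizing at $\xi=0$ produces an Euler-type indicial operator governed by the Casimir
\begin{equation*}
P\chi=4([b_0,[b_0^{\st},\chi]]+[a_0,[a_0,\chi]])-2[a_0,\chi]
\end{equation*}
from the proof of Lemma \ref{lemma_goodasympto}, whose spectrum on the decomposition $W^{k-1}\otimes W^{k-1}\cong W^{2k-2}\oplus\cdots\oplus W^0$ equals $\{l(l+2)/2\}_{l=0}^{2k-2}$. The only obstruction comes from the trivial component $W^0$, and this is killed by the fact that the source has vanishing trace (the same mechanism that gave $\Tr(\Delta)=0$ in Lemma \ref{lemma_goodasympto}); on every $W^l$ with $l\geq 1$ the indicial operator is strictly invertible. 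A standard Frobenius construction then produces a regular solution of the form $\xi(t)=\xi_1 t+\xi_2 t^2+\cdots$, whose derivative is bounded at $t=0$.

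The main obstacle will be matching the actual smooth limit $h$ from Proposition \ref{prop_poleexistence} with this formal regular branch rather than an exotic mode supported in the kernel of the indicial operator. The a priori estimate $|\xi|\leq Ct$ should rule out such modes, but a rigorous argument demands a weighted Gr\"onwall estimate against the singular coefficients, following the approach of Donaldson \cite[Lemma 2.20]{donaldson1984nahmclassification}. Once $|dh/dt|$ is controlled near $t=0$, the symmetric argument near $t=1$ combined with interior smoothness yields $|dh/dt|\leq C$ on $(0,1)$, and hence $|dg/dt|\leq C$ via the functional calculus for $h^{1/2}$ using the uniform upper and lower bounds on $h$.
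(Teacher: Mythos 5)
Your setup coincides with the paper's: reduce to bounding $|dh/dt|$, invoke the identity \eqref{eq_indentityuniquenesslemma} with $\hF(\al',\be_1',\be_2',\be_3')=0$, and use the normalization of Lemma \ref{lemma_goodasympto} together with $|h-\Id|\leq Ct$. But from that point on your argument is a plan rather than a proof, and the step you defer is exactly the content of the lemma. The paper expands the identity into $\frac{1}{4}\frac{d}{dt}(h^{-1}\frac{dh}{dt})+\Omega_1+\Omega_2=0$, uses $|h-\Id|\leq Ct$ to bound the singular coefficients and obtain the differential inequality $|\frac{d}{dt}(h^{-1}\frac{dh}{dt})|\leq C(|\frac{dh}{dt}|+t^{-1})$, and then runs an elementary mean-value argument on dyadic intervals $[\frac{1}{2}t_0,\frac{3}{2}t_0]$: integrating the inequality compares $\frac{dh}{dt}|_{t=t_0}$ with the difference quotient $\frac{h(\frac{3}{2}t_0)-h(\frac{1}{2}t_0)}{t_0}$, which is $O(1)$ precisely because $|h-\Id|\leq Ct$, and the error term $CMt_0$ (with $M=\sup_{[\frac{1}{2}t_0,\frac{3}{2}t_0]}|\frac{dh}{dt}|$) is absorbed for $t_0$ small, yielding $M\leq C$. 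You acknowledge that "a rigorous argument demands a weighted Gr\"onwall estimate \ldots following the approach of Donaldson" but never supply it; that estimate \emph{is} the proof, so the gap is not peripheral.

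The indicial/Frobenius detour you propose in its place does not close the gap on its own. The equation for $h$ is nonlinear, its right-hand side after the reduction is only bounded (not analytic in $t$), and the first-order terms in $\Omega_1$ carry $t^{-1}$-singular coefficients, so classical Frobenius theory does not apply directly; at best the Casimir $P$ identifies formal indicial roots. More importantly, even granting a formal regular branch $\xi=\xi_1t+\xi_2t^2+\cdots$, nothing in your argument shows that the limit $h$ produced by Proposition \ref{prop_poleexistence} agrees with it to first order --- the a priori information is only $|h-\Id|\leq Ct$, which controls $\xi$ but not $\xi'$, and ruling out bounded solutions with unbounded derivative near $t=0$ is precisely what must be proved. (The trace argument you import from Lemma \ref{lemma_goodasympto} addresses a different computation, namely killing the $t^{-1}$ residue of $\hF$, and would need separate justification here.) To complete your write-up you would have to carry out the quantitative estimate you postponed, at which point you would be reproducing the paper's argument.
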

			\begin{proof}
				As $g=h^{\frac12}$, it is enough to prove $|\frac{dh}{dt}|\leq C$. To save notation, the $C$ in this proof is a uniform constant might have different values in different lines. As $(\al',\be_1',\be_2',\be_3')$ is a solution, we have
				\begin{equation}
				\frac{1}{2}\hF(\al,\be_1,\be_2,\be_3)=\bar{d}_{\al}(h^{-1}d_{\al}h+\sum_{i=1}^3\bar{d}_{\be_i}h^{-1}d_{\be_i}h).
				\label{eq_temp}
				\end{equation}
				
				We define 
				\begin{equation*}
				\begin{split}
				\Omega_1:&=\frac12([\al,h^{-1}\frac{dh}{dt}]+h^{-1}\frac{dh}{dt}h^{-1}[\al^{\st},h]-h^{-1}[\frac{d\al^{\st}}{dt},h]-h^{-1}[\al^{\st},\frac{dh}{dt}]), \\
				\Omega_2:&=[\al,h^{-1}[\al^{\st},h]]+\sum_{i=1}^3[\be_1,h^{-1}[\be_1^{\st},h]]-\frac12 \hF(\al,\be_1,\be_2,\be_3).
				\end{split}
				\end{equation*}
				We expand \eqref{eq_temp} and obtain 
				\begin{equation}
				\frac{1}{4}\frac{d}{dt}(h^{-1}\frac{dh}{dt})+\Omega_1+\Omega_2=0.
				\label{eq_temp2}
				\end{equation}
				
				In addition, as $|h-1|\leq Ct$, we have $$|[\al^{\st},h]|\leq C,\;[\be^{\st},h]\leq C,\;|[\frac{d\al^{\st}}{dt},h]|\leq Ct^{-1}.$$
				Combining these estimates and Lemma \ref{lemma_goodasympto}, we have $|\al-\al^{\st}|\leq C$. Therefore, from \eqref{eq_temp2}, we obtain
				$$|\frac{d}{dt}(h^{-1}\frac{dh}{dt})|\leq C(|\frac{dh}{dt}|+t^{-1}).$$
				Given any $t_0$ and for any $t_1\in [\frac12 t_0,\frac32 t_0]$, we have 
				$$|h^{-1}\frac{dh}{dt}|_{t=t_1}-h^{-1}\frac{dh}{dt}|_{t=t_0}|\leq C(Mt_0+1),$$
				with $M:=\sup_{[\frac{1}{2}t_0,\frac{3}{2}t_0]}|\frac{dh}{dt}|.$ 
				
				As $|h-1|\leq Ct$, for $t_0\leq \frac{1}{2}M^{-1}$, we take $t_1$ to be the point that $|\frac{dh}{dt}|$ achieved $M$, we obtain 
				$|M|\leq C(|\frac{dh}{dt}|_{t=t_0}|+1).$ In addition, still using $|h-1|\leq Ct$, we obtain 
				$$|\frac{dh}{dt}|_{t=t_1}-\frac{dh}{dt}|_{t=t_0}|\leq C(Mt_0+1),$$
				
				After integrating $t_1$ along $[\frac{1}{2}t_0,\frac{3}{2}t_0]$, we obtain
				$$|\frac{h(\frac{3}{2}t_0)-h(\frac{1}{2}t_0)}{t_0}-\frac{dh}{dt}|_{t=t_0}|\leq C(Mt_0+1).$$ As $|\frac{h(\frac{3}{2}t_0)-h(\frac{1}{2}t_0)}{t_0}|\leq Ct_0$, for small $t_0$, we have $M\leq C$ which is the desire estimate. 
			\end{proof}
			
			Now, we can state our main theorem
			\begin{theorem}
				The map $\Xi$ is a bijection.
			\end{theorem}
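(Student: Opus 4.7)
The plan is to establish bijectivity by combining the injectivity statement already proved in Proposition \ref{prop_poleinjective} with a surjectivity argument that is essentially assembled from the three preparatory results of this subsection. Concretely, injectivity is immediate: if two solutions $X,X'\in\MMdp$ determine equivalent Nahm complexes via a complex gauge $g\in\MGCC$, then Proposition \ref{prop_poleinjective} forces $g$ to be unitary, hence $X$ and $X'$ represent the same class in $\MMdp$. So the entire content of the theorem is surjectivity.

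For surjectivity, I would start with a Nahm complex $(\al,\be_1,\be_2,\be_3,v)$ and first apply Lemma \ref{lemma_goodasympto} to replace it by an equivalent representative whose residues at $t=0$ are standardized to $(a_0, s_1 b_0, s_2 b_0, s_3 b_0)$ with $|s_1|^2+|s_2|^2+|s_3|^2=1$, and such that $\hF(\al,\be_1,\be_2,\be_3)$ is bounded on $[0,1]$. This standardization is what allows the pole terms of the real equation to be absorbed, and it is the step where the principal $\slf_2$ structure of Assumption \ref{assump} (iv) enters. Because the equivalence is through a continuous complex gauge which is the identity at $t=0$, the distinguished vector $v$ is preserved.

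Next, for each $0<\ep<\tfrac14$, invoke the interior existence/uniqueness lemma to produce the unique complex gauge $g_\ep$ on $[\ep,1-\ep]$ with $g_\ep^\st g_\ep=1$ at the endpoints, symmetric under $t\mapsto 1-t$, such that $g_\ep(\al,\be_1,\be_2,\be_3)$ solves \eqref{eq_decoupledNahm}. Set $h_\ep:=g_\ep^\st g_\ep$. Proposition \ref{prop_poleexistence} then provides the crucial limit: $h_\ep$ converges smoothly on $(0,1)$ to a continuous map $h:[0,1]\to\UH$ with $h(0)=h(1)=\Id$ and $|h(t)-\Id|\le Ct$, and for any $g$ with $g^\st g=h$, $g(\al,\be_1,\be_2,\be_3)$ satisfies the real equation on $(0,1)$.

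The main obstacle, and where I expect the real work to concentrate, is to show that the complex gauge $g=h^{1/2}$ does not destroy the pole structure required by Assumption \ref{assump}. This is precisely the role of Lemma \ref{prop_polerightbehavior}: the bound $|dg/dt|\le C$ together with $h(0)=\Id$ guarantees that the new $\al':=g\al g^{-1}-\tfrac12 (dg/dt)g^{-1}$ still has a simple pole at $t=0$ with residue $a_0$, and that $\be_i':=g\be_ig^{-1}$ have simple poles with residues $s_ib_0$. The symmetry $g(1-t)=g^\intercal(t)^{-1}$, inherited from the symmetric choice $h=g^\st g$ and $h(1-t)=h^\intercal(t)^{-1}$, yields the corresponding pole structure at $t=1$. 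Combining these, $g(\al,\be_1,\be_2,\be_3)$ represents a class in $\MMdp$ whose image under $\Xi$ is the original Nahm complex class (the complex gauge $g$ being by construction an equivalence of Nahm complexes that fixes $v$ up to the imposed normalization). This establishes surjectivity and hence the bijection.
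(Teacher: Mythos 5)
Your proposal is correct and follows essentially the same route as the paper, which simply cites Proposition \ref{prop_poleinjective} for injectivity and assembles Lemma \ref{lemma_goodasympto}, the interior existence lemma, Proposition \ref{prop_poleexistence}, and Lemma \ref{prop_polerightbehavior} for surjectivity exactly as you describe. Your write-up is in fact more explicit than the paper's one-line proof about how these pieces fit together, but the underlying argument is identical.
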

			\begin{proof}
				It follows directly from Proposition \ref{prop_poleexistence}, \ref{prop_poleinjective} and Lemma \ref{prop_polerightbehavior}.
			\end{proof}
			
		\end{subsection}
		
		\begin{subsection}{Classification of Nahm complexes}
			In this subsection, we will give a classification of Nahm complexes in terms of weighted symmetric matrix.
			
			Let $B$ be a $k\times k$ symmetric matrix, we same $w\in \mathbb{C}^k$ is a weight of $B$ if $w$ generates $\mathbb{C}^k$ as a $\mathbb{C}[B]$ module. 
			
			Using the weight, we can define a filtration $$F_{(B,w)}^{\bullet}=\{0=F^0_{(B,w)}\subset F^1_{(B,w)}\subset \cdots F^{k}_{(B,w)}=\mathbb{C}^k\},$$ where
			$F^i_{(B,w)}:=\mathrm{span}\{w,Bw,\cdots,B^{i-1}w\}\subset \mathbb{C}^k.$
			
			We recall the following well-known lemma about asymptotic behavior of solutions $\bar{d}_{\al}s=0.$
			\begin{lemma}{\cite[Page 90, Theorem 2]{coppel1965stability}, \cite[Page 183]{hitchin1983construction}}
				\label{lemma_ODE}
				We write $\al=\frac{a_0}{t}+a_1$ with $a_0=\diag(-\frac{k-1}{4},\cdots,\frac{k-1}{4})$, then the equation $\frac{d}{dt}s+2\al s=0$ has a fundamental system of solutions $s_1(t),s_2(t),\cdots, s_k(t)$ such that $t^{-\frac{k-1}{2}+i-1}s_i(t)\to e_i$, where $e_i$ is an unit eigenvector correspondence to the eigenvalue $\frac{k-1}{2}-(i-1).$
			\end{lemma}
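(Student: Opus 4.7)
The statement is a standard Frobenius-type asymptotic result for a linear first-order system with a regular singular point at $t=0$, and my plan is to verify that the setup matches the hypotheses of the cited theorems \cite[Ch.~4]{coppel1965stability} and \cite[p.~183]{hitchin1983construction}.

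First I would rewrite the ODE as $\frac{ds}{dt} = -\frac{2a_0}{t} s - 2 a_1 s$, which is Fuchsian at $t=0$ with indicial matrix $-2a_0$. Since $a_0$ is already diagonal with eigenvalues $\lambda_i = -\frac{k-1}{4} + \frac{i-1}{2}$, the indicial roots are $\mu_i := -2\lambda_i = \frac{k-1}{2} - (i-1)$ and the corresponding eigenvectors are the standard basis vectors $e_i$. In particular the indicial matrix is diagonalizable with a pre-chosen eigenbasis, so one does not need to first carry out a constant gauge change to put the equation in normal form.

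Next, for each index $i$ I would try the Frobenius ansatz $s_i(t) = t^{\mu_i}\bigl(e_i + \sum_{n\ge 1} v_i^{(n)} t^n\bigr)$. Substituting into the ODE yields the recursion $\bigl(nI + \mu_i I + 2a_0\bigr) v_i^{(n)} = -2 a_1 v_i^{(n-1)}$, which can be solved uniquely except at those integers $n$ for which $nI + \mu_i I + 2a_0$ fails to be invertible, i.e.\ for $n = \mu_j - \mu_i = i - j$ with $j < i$. Standard Fuchsian theory then provides convergent solutions $s_i$ for each $i$, possibly with logarithmic corrections of the form $c\, t^{\mu_j}\log t \cdot w_j$ for some $j < i$ and constants $c$, $w_j$.

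The main obstacle is precisely this resonance phenomenon: because the indicial roots $\mu_i$ are integer-spaced, logarithmic terms cannot be ruled out a priori. However, any such log correction is attached to a power $t^{\mu_j}$ with $\mu_j > \mu_i$, and $t^{\mu_j}\log t = o(t^{\mu_i})$ as $t\to 0^+$. Consequently these corrections are strictly subleading, and the leading-order asymptotic $t^{-\mu_i} s_i(t) \to e_i$ stated in the lemma is unaffected. The essential inputs, namely the diagonal form of $a_0$ and the regularity of $a_1$ at $t=0$, are built into our hypotheses, so the lemma follows directly from the asymptotic integration results in the cited references.
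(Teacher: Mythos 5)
Your argument is correct, but note that the paper does not actually prove this lemma: it is quoted directly from Coppel and from Hitchin's monopole paper, and the proof in those references goes by asymptotic integration rather than by Frobenius series. Concretely, one substitutes $t=e^{-\tau}$ so that the equation becomes $\frac{ds}{d\tau}=(2a_0+2e^{-\tau}a_1)s$ on $\tau\in[\tau_0,\infty)$; since $a_0$ is constant and diagonal with \emph{distinct} eigenvalues and the perturbation $2e^{-\tau}a_1$ is $L^1$ in $\tau$ (only boundedness, or even integrability, of $a_1$ near $t=0$ is needed), Levinson's theorem gives solutions $s_i(\tau)=e^{-\mu_i\tau}(e_i+o(1))=t^{\mu_i}(e_i+o(1))$ with $\mu_i=\tfrac{k-1}{2}-(i-1)$, which is exactly the claim. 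Your Frobenius route reaches the same conclusion and your handling of the resonances is right: the integer spacing $\mu_j-\mu_i=i-j$ forces possible terms $t^{\mu_j}(\log t)^m$ with $j<i$, and these are $o(t^{\mu_i})$, so the leading asymptotics survive; in the paper's setting this is legitimate because the Nahm complex axioms make $\alpha$ meromorphic with a simple pole, so $a_1$ is indeed analytic at $t=0$. What the asymptotic-integration approach buys is that it works under much weaker regularity of $a_1$ and avoids the series/resonance bookkeeping entirely; what your approach buys is full convergent expansions of the $s_i$ rather than just leading-order asymptotics. Two small points to tidy: your recursion as written treats $a_1$ as constant (for analytic $a_1=\sum_m a_1^{(m)}t^m$ the right-hand side should be $-2\sum_m a_1^{(m)}v_i^{(n-1-m)}$), and once a logarithm enters at a resonant level it propagates to all higher orders, so the correction is a full series $t^{\mu_i}\log t\cdot\phi(t)$ with $\phi(t)=O(t^{i-j})$ rather than a single term; neither affects the conclusion.
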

			
			Therefore, there exists an unique element $s(t)$ such that $\frac{d}{dt}s+2\al s=0$, $\lim_{t\to 0}t^{-\frac{k-1}{2}}s(t)=v$. We define $w=s(\frac{1}{2})$ and denote $B_i=\be_i(\frac 12)$. The quadruple $(B_1,B_2,B_3,w)$ satisfies some properties which we summarized as the follows.
			
			\begin{lemma}
				The quadruple $(B_1,B_2,B_3,w)$ satisfies the following properties
				\begin{itemize}
					\item [(i)] $B_1,B_2,B_3$ are symmetric matrix such that $[B_i,B_j]=0$ for $i,j=1,2,3$,  
					\item [(ii)] $w$ is acyclic vector for $B_1,B_2,B_3$,
					\item [(iii)] let $F_{(B_i,w)}^{\bullet}$ be the filtration defined by $B_i$ and $w$, then $F_{(B_1,w)}^{\bullet}=F_{(B_2,w)}^{\bullet}=F_{(B_3,w)}^{\bullet}$. In addition, we can write
					\begin{equation*}
					F_{(B_i,w)}^{l}=\{s|_{t=\frac{1}{2}}|\bar{d}_{\al}s=0,\;\lim_{t\to 0}t^{-\frac{k-1}{2}+l-1}s\;\mathrm{exists}\}.
					\end{equation*}
				\end{itemize}	
			\end{lemma}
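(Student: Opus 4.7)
The plan is to transfer information from the singular point $t=0$ to the evaluation point $t=1/2$ via the $k$-dimensional ODE space $\calV := \{s : d_\al s = 0\}$ and the evaluation map $T : \calV \to \mathbb{C}^k$, $T(s) := s(1/2)$, which is a linear isomorphism by ODE uniqueness. We will realize the $B_i$'s as raising operators on an asymptotic filtration of $\calV$, and deduce all three claims from this.

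Property (i) is immediate: evaluating $\be_i(1-t) = \be_i^{\intercal}(t)$ at $t = 1/2$ gives $B_i = B_i^{\intercal}$, while the complex equation $[\be_i, \be_j] = 0$ at $t = 1/2$ gives $[B_i, B_j] = 0$. For (ii) and (iii), apply Lemma \ref{lemma_ODE} to fix a basis $s_1, \ldots, s_k$ of $\calV$ with $t^{-(k-1)/2 + j - 1} s_j(t) \to e_j$, and set $\tilde{F}^l := \mathrm{span}(s_1, \ldots, s_l)$. By the lemma, $\tilde{F}^l = \{s \in \calV : \lim_{t \to 0} t^{-(k-1)/2 + l - 1} s \text{ exists}\}$. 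It then suffices to show $F^l_{(B_i, w)} = T(\tilde{F}^l)$ for every $i$ and $l$: the case $l = k$ yields cyclicity in (ii), while the intrinsic $i$-free description of $\tilde{F}^l$ yields (iii).

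The mechanism is that each $\be_i$ acts as a raising operator on this asymptotic filtration. The complex equation is equivalent to $[d_\al, \be_i] = 0$, so pointwise multiplication by $\be_i$ preserves $\calV$. After the Corollary \ref{coro_complexgaugerightasymptotic} normalization, $\be_i = s_i b_0 / t + O(1)$ near $t = 0$ with every $s_i \neq 0$ (the Nahm-complex axiom requiring $\{b_i^j v\}$ to span $\mathbb{C}^k$ forces $s_i \neq 0$), and $b_0 e_l = \gamma_l e_{l+1}$ with $\gamma_l \neq 0$. Consequently
\[
\be_i s_l(t) = s_i \gamma_l \, t^{(k-1)/2 - l} e_{l+1} + O\bigl(t^{(k-1)/2 - l + 1}\bigr),
\]
so $\be_i$ sends $\tilde{F}^l$ into $\tilde{F}^{l+1}$ with nonzero image in the successive quotient. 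Since $w = T(s_1)$ (the asymptotic defining $s$ in the construction of $w$ matches that of $s_1$), induction on $l$ shows that $\{w, B_i w, \ldots, B_i^{l-1} w\}$ is a basis of $T(\tilde{F}^l)$, which is the desired equality. The delicate point is the displayed leading-order computation: the $O(1)$ tail of $\be_i$ and the subleading Frobenius corrections to $s_l$ might a priori cancel the coefficient $s_i \gamma_l$, but because $a_0$ has half-integer-spaced eigenvalues the Frobenius expansion of $s_l$ proceeds in integer powers of $t$ relative to its leading exponent, so all lower-order contributions land strictly inside $\tilde{F}^l$ and no cancellation with the $e_{l+1}$-direction can occur.
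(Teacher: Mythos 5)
Your argument is correct and follows essentially the same route as the paper: the paper likewise identifies $F^l_{(B_i,w)}$ with the $t=\tfrac12$ evaluation of the asymptotically defined filtration $G_i^l=\{s\mid \bar{d}_{\al}s=0,\ \lim_{t\to 0}t^{-\frac{k-1}{2}+l-1}s \text{ exists}\}$ on the ODE solution space, and shows $\be_i$ raises this filtration with nonvanishing leading term $b_0^m v$ so that $G_i^l=\mathrm{span}\{s_0,\be_i s_0,\dots,\be_i^{l-1}s_0\}$. Your additional remark about integer spacing of the indicial roots ruling out cancellation with the subleading Frobenius terms is a point the paper leaves implicit, but it is the same proof.
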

			\begin{proof}
				(i) follows directly from our assumption $\be_i(t)=\be_i^{\intercal}(1-t)$ and $[\be_i,\be_j]=0$. 
				
				For integer $l$, the following space $$G_i^l:=\{s|\bar{d}_{\al}s=0,\;\lim_{t\to 0}t^{-\frac{k-1}{2}+l-1}s\;\mathrm{exists}\},$$ defines a filtration for of $\mathbb{C}^k$ for each $t$. We denote $s_0$ be the vector that $\bar{d}_{\al}s_0=0$ and $\lim_{t\to 0}t^{-\frac{k-1}{2}}s_0=v$. As $\lim_{t\to 0}tB_i=s_ib_0$ when, for each integer $1 \leq m\leq k$, the limit $\lim_{t\to 0}t^{-\frac{k-1}{2}+m}(\be_i)^ms_0=b_0^mv$ exists and nonvanishing. In addition, as $\bar{d}_{\al}(\be_i^ms_0)=0$, we obtain $\be_i^ms_0\in G_i^l$ for each $m\leq l$. Moreover, $v,b_0v,\cdots,b^{m-1}v$ are all eigenvector of $a_0$, by Lemma \ref{lemma_ODE}, we conclude that
				$$
				G_i^l=\mathrm{span}\{s_0,\be_is_0,\cdots,\be_i^{l-1}s_0\}.
				$$
				In addition, when restricting $G_i^l$ to $t=\frac12$, we conclude that $G_i^l|_{t=\frac 12}=F^l_{(B_i,w)}$. (ii) and (iii) follows directly from this description.
			\end{proof}
			
			\begin{definition}
				We call $(B_1,B_2,B_3,w)$ a Nahm quadruple if the following holds
				\begin{itemize}
					\item [(i)] $B_1,B_2,B_3$ are symmetric matrix with $[B_i,B_j]=0$ for $i,j=1,2,3$,
					\item [(ii)] $w$ generates $\mathbb{C}^k$ as a $\mathbb{C}[B_i]$ module for each $i=1,2,3$,
					\item [(iii)] $(B_i,w)$ generates the same filtration of $\mathbb{C}^k$: $F_{(B_1,w)}^{\bullet}=F_{(B_2,w)}^{\bullet}=F_{(B_3,w)}^{\bullet}.$
				\end{itemize}
			\end{definition}
			
			For $A\in O(k,\mathbb{C})$, $A$ acts on $(B_1,B_2,B_3,w)$ by $B_i\to AB_iA^{-1}$ and $w\to Aw$, then we have
			\begin{lemma}
				\label{lemma_equivalentNahmquadruple}
				$(B_1,B_2,B_3,w)$ depends only on the equivalence class of the Nahm complexes $(\al,\be_1,\be_2,\be_3)$ up to the action of $O(k,\mathbb{C})$. 
			\end{lemma}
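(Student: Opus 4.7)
My plan is to extract a single element $A\in O(k,\BC)$ from the gauge that realises the equivalence, and then verify that $A$ intertwines the two Nahm quadruples. Given an equivalence $g:[0,1]\to GL(k,\BC)$ between two Nahm complexes $(\al,\be_1,\be_2,\be_3,v)$ and $(\al',\be_1',\be_2',\be_3',v')$, I will simply set $A:=g(\tfrac12)$. Evaluating the symmetry condition $g(1-t)=g^{\intercal}(t)^{-1}$ at $t=\tfrac12$ yields $A\,A^{\intercal}=\Id$, so $A\in O(k,\BC)$ as desired.

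For the matrices $B_i$ the identification is immediate: from the transformation rule $\be_i'=g\be_ig^{-1}$, reading off values at $t=\tfrac12$ gives $B_i'=AB_iA^{-1}$. The substance of the argument lies in comparing the weights $w$ and $w'$. Recall that $w=s(\tfrac12)$, where $s$ is the unique solution of $\bar{d}_\al s=0$ with $\lim_{t\to 0}t^{-(k-1)/2}s(t)=v$, and analogously for $s',w',v'$. My plan is to prove $s'=gs$ on $(0,1)$ and then evaluate at $t=\tfrac12$.

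The key input will be the intertwining $g^{-1}\circ\bar{d}_{\al'}\circ g=\bar{d}_\al$ already exploited in the proof of \eqref{eq_indentityuniquenesslemma}, which gives $\bar{d}_{\al'}(gs)=0$. Since $g$ is continuous on $[0,1]$ with $g(0)v=v'$, I then obtain
\[
\lim_{t\to 0}t^{-(k-1)/2}(gs)(t)=g(0)\,v=v',
\]
which matches the defining asymptotic of $s'$.

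The one delicate point will be arguing that this asymptotic condition really pins $gs$ down uniquely. For this I would invoke Lemma \ref{lemma_ODE}: the fundamental system $s_1,\dots,s_k$ satisfies $t^{-(k-1)/2+i-1}s_i(t)\to e_i$, so any solution admitting a finite $t^{-(k-1)/2}$-limit can contain no contribution from the strictly more singular modes $s_2,\dots,s_k$. This forces $gs=s'$ and hence $w'=(gs)(\tfrac12)=Aw$. Combined with $B_i'=AB_iA^{-1}$ and $A\in O(k,\BC)$, this proves the lemma.
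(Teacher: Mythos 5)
Your argument is correct and is exactly the verification the paper compresses into ``it follows directly by the definition'': $A=g(\tfrac12)$ lies in $O(k,\mathbb{C})$ by the symmetry condition $g(1-t)=g^{\intercal}(t)^{-1}$, conjugates the $B_i$, and sends $w$ to $w'$ because $gs$ solves the gauge-transformed ODE with asymptotic $t^{-(k-1)/2}(gs)\to g(0)v=v'$, which by Lemma \ref{lemma_ODE} pins the solution down uniquely (any admixture of the modes $s_2,\dots,s_k$ would make $t^{-(k-1)/2}s$ blow up). The only caveat is notational: the defining equation for $s$ is $\frac{d}{dt}s+2\al s=0$, i.e.\ $d_{\al}s=0$, and it is this operator that transforms covariantly as $d_{\al'}=g\,d_{\al}\,g^{-1}$ under the complex gauge action --- the paper itself wavers between $d_{\al}$ and $\bar d_{\al}$ at this point and your write-up inherits that, but the substance of the argument is unaffected.
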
 
			\begin{proof}
				It follows directly by the definition.
			\end{proof}
			
			We define the moduli space of Nahm quadruple as 
			\begin{equation}
			\MM_{Symm}:=\{\mathrm{Nahm\;quadruple\;}(B_1,B_2,B_3,w)\}/O(k,\mathbb{C}).
			\end{equation}
			For each Nahm complex $(\al,\be_1,\be_2,\be_3,v)$, we can define a Nahm quadruple $(B_1,B_2,B_3,w)$ by taking $B_i:=\be_i|_{t=\frac{1}{2}}$ and $w=s_0|_{t=\frac12}$, where $s_0$ is the unique element satisfies $\bar{d}_{\al}s_0=0$ and $\lim_{t\to 0}t^{-\frac{k-1}{2}}s(t)=v.$ By Lemma \ref{lemma_equivalentNahmquadruple}, we obtain a well-defined map 
			$$\kappa:\MMnc\to \MM_{Symm}.$$ Now, we will construct the inverse map of $\kappa$.
			
			\begin{lemma}
				For any Nahm quadruple $(B_1,B_2,B_3,w)$, there exists a Nahm complex $(\al,\be_1,\be_2,\be_3,v)$ such that $\kappa(\al,\be_1,\be_2,\be_3,v)=(B_1,B_2,B_3,w)$. 
			\end{lemma}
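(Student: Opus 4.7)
The plan is to realize $(\al,\be_1,\be_2,\be_3)$ as a complex gauge transform of the trivial pair $(\tilde\al,\tilde\be_i)=(0,B_i)$ on $(0,1)$. If $g:(0,1)\to GL(k,\CC)$ is smooth, setting $\al:=-\tfrac12 g'g^{-1}$ and $\be_i:=gB_ig^{-1}$ automatically satisfies the complex equations and $[\be_i,\be_j]=0$, while the transpose symmetries of $\al,\be_i$ reduce to the single condition $g(1-t)=g(t)^{-\intercal}$. The normalization $g(1/2)=\Id$ will then yield $\be_i(1/2)=B_i$, and (taking $s_0(t):=g(t)w$) $s_0(1/2)=w$. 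The entire pole data is thus dictated by the asymptotic behavior of $g$ at $t=0$.

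The key algebraic step is to exploit the Nahm-quadruple hypotheses. Cyclicity of $w$ for $B_1$ together with $[B_1,B_j]=0$ forces $B_j\in\CC[B_1]$, and the common-filtration condition $F^2_{(B_j,w)}=F^2_{(B_1,w)}$ forces this polynomial to have degree exactly one, so that $B_j=a_jI+b_jB_1$ with $b_j\neq 0$ for $j=2,3$. Set $\tilde\gamma_j:=\gamma_1\cdots\gamma_{j-1}$ and define $P\in GL(k,\CC)$ by $Pe_j:=\tilde\gamma_j^{-1}B_1^{j-1}w$. A short computation using $b_0e_j=\gamma_j e_{j+1}$ gives
\[
P^{-1}B_1P=b_0+E,
\]
where $E$ is supported in column $k$. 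Each entry $E_{l,k}$ sits in a position with $\mathrm{ad}(a_0)$-eigenvalue $(l-k)/2\le 0$, so $P^{-1}B_1P$ has no $\mathrm{ad}(a_0)$-component of eigenvalue exceeding $1/2$, and its $1/2$-component is exactly $b_0$. Since $I$ is $\mathrm{ad}(a_0)$-invariant, the same statement holds for $P^{-1}B_jP$ with $1/2$-component $b_jb_0$; set $s_1:=1,\;s_2:=b_2,\;s_3:=b_3$.

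To produce $g$, fix any smooth $h:[1/2,1]\to GL(k,\CC)$ with $h(1/2)=2^{2a_0}$ and $h(1)=P^{\intercal}$, define $g(t):=(1-t)^{2a_0}h(t)$ on $[1/2,1)$, and extend by $g(t):=g(1-t)^{-\intercal}$ on $(0,1/2]$. Then $g(1/2)=\Id$, the symmetry is built in, and a direct transpose computation shows $g(t)\sim t^{-2a_0}P^{-1}$ as $t\to 0^+$. Setting $\al,\be_i$ as above and $v:=e_1$, diagonality of $a_0$ yields $\al(t)=a_0/t+O(1)$; the conjugation rule $t^{-2a_0}Xt^{2a_0}=t^{-2\lambda}X$ for $X$ of $\mathrm{ad}(a_0)$-eigenvalue $\lambda$, combined with the algebraic decomposition of the previous paragraph, gives $\be_i(t)=s_ib_0/t+O(1)$. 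Each $s_i\neq 0$, so $\{b_i^j v\}_{j=0}^{k-1}$ spans $\CC^k$, and $\Tr(a_0)=0$; the section $s_0(t):=g(t)w$ then satisfies $d_\al s_0=0$, $s_0(1/2)=w$, and $t^{-(k-1)/2}s_0(t)\to e_1=v$ as $t\to 0$, so $\kappa(\al,\be_1,\be_2,\be_3,v)=(B_1,B_2,B_3,w)$.

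The main obstacle is the algebraic simultaneity claim: a \emph{single} change of basis $P$, built from the $B_1$-cyclic sequence starting at $w$, must place all three $B_i$ into the form $s_ib_0+(\text{non-positive $\mathrm{ad}(a_0)$-eigenvalue components})$. This succeeds precisely because the common-filtration hypothesis collapses each $B_j$ into an affine function of $B_1$, reducing the problem to the structure of a single matrix; without this reduction, the three matrices could not in general be compatibly aligned with the principal $\slf_2$-decomposition, and the gauge-theoretic construction would break down.
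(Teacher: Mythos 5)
Your overall strategy is the same as the paper's: pass to the cyclic basis $\{B_1^{j-1}w\}$ (rescaled so that $B_1$ becomes $b_0$ plus terms of non-positive $\mathrm{ad}(a_0)$-weight), then conjugate the constant data by a diagonal power of $t$ — your $t^{-2a_0}$ is exactly the paper's $p(t)=\diag(f^{(k-1)/2},\dots,f^{-(k-1)/2})$ with $f=t$ up to a constant. Your algebraic step is in fact cleaner and stronger than the paper's: from cyclicity and $[B_1,B_j]=0$ you get $B_j\in\mathbb{C}[B_1]$, and the level-two filtration condition forces $B_j=a_j\Id+b_jB_1$ with $b_j\neq 0$, whereas the paper only extracts constancy of the subdiagonal entries by an entry-by-entry commutator computation. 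The weight bookkeeping for $P^{-1}B_iP$ and the verification of $\kappa(\al,\be_1,\be_2,\be_3,v)=(B_1,B_2,B_3,w)$ via $s_0=gw$ are correct.

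The gap is in ``fix \emph{any} smooth $h$'': the asymptotic $g(t)\sim t^{-2a_0}P^{-1}$ alone does not justify $\be_i=s_ib_0/t+O(1)$. Writing $g(t)=t^{-2a_0}q(t)$ with $q(t)=h(1-t)^{-\intercal}$ smooth and $q(0)=P^{-1}$, one gets $\be_i=t^{-2a_0}\bigl(P^{-1}B_iP+O(t)\bigr)t^{2a_0}$, and conjugating the $O(t)$ error by $t^{-2a_0}$ amplifies its top $\mathrm{ad}(a_0)$-weight component (weight up to $(k-1)/2$) to size $O(t^{2-k})$: for $k=3$ this pollutes the residue, and for $k\geq 4$ it destroys the simple-pole condition altogether; moreover, for every $k$ a generic smooth $h$ fails to make $\al,\be_i$ meromorphic at $t=0$, which the definition of a Nahm complex requires. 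The fix is one line — take $h\equiv P^{\intercal}$ near $t=1$, so that $g(t)=t^{-2a_0}P^{-1}$ exactly near $t=0$ and the $\be_i$ become genuine Laurent polynomials there — and this is precisely what the paper's choice of $f(t)$ proportional to $t$ exactly near $t=0$ accomplishes. A similar (minor) care is needed for smoothness of the glued $g$ at $t=1/2$: the reflection forces $g'(1/2^-)=g'(1/2^+)^{\intercal}$, so $h$ must be chosen with $g'(1/2)$ symmetric.
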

			\begin{proof}
				As $\{w,B_1w,\cdots,B_1^{k-1}w\}$ span $\mathbb{C}^k$, in these bases, we write $B=GB'G^{-1}$, where $B'$ can be written as 
				\begin{equation*}
				\begin{split}
				B_1'=\begin{pmatrix}
				0 & 0&\cdots & 0&\st \\
				1& 0& \cdots& \cdots& \st\\
				0& 1& \cdots& \cdots& \st\\
				\cdots& \cdots& \cdots& 0& \st\\
				0& \cdots& 0& 1&  \st
				\end{pmatrix}.
				\end{split}
				\end{equation*}
				In addition, as $F_{(B_1,w)}^{\bullet}=F_{(B_2,w)}^{\bullet}=F_{(B_3,w)}^{\bullet},$ we can write $B_2=GB_2'G^{-1}$ and $B_3=GB_3'G^{-1}$ with
				\begin{equation*}
				\begin{split}
				B_2'=\begin{pmatrix}
				\st & \st&\cdots & \st&\st \\
				\gamma_1& \st& \cdots& \cdots& \st\\
				0& \gamma_2& \cdots& \cdots& \st\\
				\cdots& \cdots& \cdots& \st& \st\\
				0& \cdots& 0& \gamma_{k-1}&  \st
				\end{pmatrix},\;B_3'=\begin{pmatrix}
				\st & \st&\cdots & \st&\st \\
				\mu_1& \st& \cdots& \cdots& \st\\
				0& \mu_2& \cdots& \cdots& \st\\
				\cdots& \cdots& \cdots& \st& \st\\
				0& \cdots& 0& \mu_{k-1}&  \st
				\end{pmatrix}.
				\end{split}
				\end{equation*}
				We write $e_i=B_1^{i-1}w$, then $B_1'e_i=e_{i+1}$, $B_2'e_i=\gamma_{i}e_{i+1}\;(\mod\;F^{i}_{(B_1,w)})$ and $B_3'e_i=\mu_{i}e_{i+1}(\mod\;F^{i}_{(B_1,w)})$. In addition, from $[B'_1,B'_2]e_i=0$ implies $\gamma_i=\gamma_{i+1}$, similarly, $\mu_i=\mu_{i+1}$ follows by $[B_1',B_3']=0$. Therefore, we write  
				\begin{equation*}
				\begin{split}
				B_2'=\gamma\begin{pmatrix}
				\st & \st&\cdots & \st&\st \\
				1& \st& \cdots& \cdots& \st\\
				0& 1& \cdots& \cdots& \st\\
				\cdots& \cdots& \cdots& \st& \st\\
				0& \cdots& 0& 1&  \st
				\end{pmatrix},\;B_3'=\mu\begin{pmatrix}
				\st & \st&\cdots & \st&\st \\
				1& \st& \cdots& \cdots& \st\\
				0& 1& \cdots& \cdots& \st\\
				\cdots& \cdots& \cdots& \st& \st\\
				0& \cdots& 0& 1&  \st
				\end{pmatrix}.
				\end{split}
				\end{equation*}
				We choose any function $f(t)$ such that $f(1-t)=f(t)^{-1}$, $f(t)=\frac{t}{1+|\gamma|^2+|\mu|^2}$ near $t=0$ and $f(t)=1$ near $t=\frac{1}{2}$, we define the complex gauge transform $p(t)=\diag(f(t)^{\frac{k-1}{2}},\cdots,f(t)^{-\frac{k-1}{2}}).$ Then near $t=0$, we have \begin{equation*}
				\begin{split}
				p(t)B_1'p(t)^{-1}\sim\frac{1}{1+|\gamma|^2+|\mu|^2}\begin{pmatrix}
				0 & 0&\cdots & 0&\st \\
				t^{-1}& 0& \cdots& \cdots& \st\\
				0& t^{-1}& \cdots& \cdots& \st\\
				\cdots& \cdots& \cdots& 0& \st\\
				0& \cdots& 0& t^{-1}&  \st
				\end{pmatrix},\\
				p(t)B_2'p(t)^{-1}\sim\frac{\gamma}{1+|\gamma|^2+|\mu|^2}\begin{pmatrix}
				\st & \st&\cdots & \st&\st \\
				t^{-1}& \st& \cdots& \cdots& \st\\
				0& t^{-1}& \cdots& \cdots& \st\\
				\cdots& \cdots& \cdots& \st& \st\\
				0& \cdots& 0& t^{-1}&  \st
				\end{pmatrix},\\
				p(t)B_3'p(t)^{-1}\sim\frac{\mu}{1+|\gamma|^2+|\mu|^2}\begin{pmatrix}
				\st & \st&\cdots & \st&\st \\
				t^{-1}& \st& \cdots& \cdots& \st\\
				0& t^{-1}& \cdots& \cdots& \st\\
				\cdots& \cdots& \cdots& \st& \st\\
				0& \cdots& 0& t^{-1}&  \st
				\end{pmatrix},
				\end{split}
				\end{equation*}
				while $$-\frac{1}{2}p^{-1}\frac{dp}{dt}\sim\frac{1}{t}\mathrm{diag}(-\frac{k-1}{2},\cdots,\frac{k-1}{2}).$$
				Therefore, for $i=1,2,3$, $\be_i(t)=GpBg^{-1}G^{-1}$ and $\al=\frac{1}{2}G(p^{-1}\frac{dp}{dt})G^{-1}$ satisfies the complex equations of \eqref{eq_decoupledNahm}. If we take $v=Gw$, there we obtain a Nahm complex with the desire asymptotic behavior.
			\end{proof}
			
			\begin{lemma}
				Let $(\al,\be_1,\be_2,\be_3,v)$, $(\al',\be_1',\be_2',\be_3')$ be two Nahm complexes and we write $(B_1,B_2,B_3,w)$ and $(B_1',B_2',B_3',w')$ be the corresponding Nahm quadruple. Suppose there exists $G\in O(k,\mathbb{C})$ such that $GB_iG^{-1}=B_i'$, $Gw=w'$, then there exists a complex gauge transform $g\in\MGCC$ such that $g(\al,\be_1,\be_2,\be_3)=(\al',\be_1',\be_2',\be_3')$. 
			\end{lemma}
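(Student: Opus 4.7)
The plan is first to reduce to the case where the Nahm quadruples coincide, and then to construct the desired gauge by trivializing $\al$ and $\al'$. The constant complex gauge $G \in O(k,\mathbb{C})$ satisfies the symmetry $G(1-t) = G(t)^{\intercal,-1}$ automatically, since $G^{\intercal}G = \Id$, and under it $(\al, \be_1,\be_2,\be_3)$ transforms to a Nahm complex with quadruple $(GB_iG^{-1}, Gw) = (B_i', w')$. Since the symmetry condition is preserved under composition of gauges, it suffices to produce $g \in \MGCC$ under the additional assumption $B_i = B_i'$ and $w = w'$.

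For this, let $p : (0,1) \to GL(k,\mathbb{C})$ be the unique smooth solution of $\frac{dp}{dt} = 2p\al$ with $p(1/2) = \Id$, and define $p'$ analogously from $\al'$. The gauge $p$ trivializes $\al$ to $0$, and the complex equation $\frac{d\be_i}{dt} + 2[\al, \be_i] = 0$ then forces $p\be_ip^{-1}$ to be $t$-constant, hence equal to its value $B_i$ at $t = 1/2$; likewise $p'\be_i'(p')^{-1} \equiv B_i' = B_i$. Setting $g := (p')^{-1}p$, we get $g \cdot (\al, \be_i) = (\al', \be_i')$ on $(0,1)$. The symmetry $g(1-t) = g(t)^{\intercal,-1}$ follows because $\tilde p(t) := p(1-t)^{\intercal,-1}$ satisfies the same ODE as $p$ with the same initial condition at $t = 1/2$ (using $\al(1-t) = \al(t)^\intercal$), hence $\tilde p = p$; the same holds for $p'$, and the two symmetries combine.

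The main obstacle is extending $g$ continuously to the endpoints. Near $t = 0$, $\al$ and $\al'$ have simple poles whose residues $a, a'$ share the spectrum $\{-(k-1)/4 + j/2\}_{j=0}^{k-1}$, so $p$ and $p'$ individually develop singular leading behavior of the form $t^{2a}$, $t^{2a'}$ (up to regular factors and possible log terms, since the eigenvalues of $2a$ differ by integers), and a priori the singularities might not cancel in $g$. The key is the matching of the quadruples: the sections $s_0 := p^{-1}w$ and $s_0' := (p')^{-1}w$ solve $\frac{ds}{dt} + 2\al s = 0$ (resp.\ for $\al'$) with $s_0(1/2) = s_0'(1/2) = w$, and by Lemma \ref{lemma_ODE} they have leading asymptotics $\sim t^{(k-1)/2}v$ and $\sim t^{(k-1)/2}v'$ at $t = 0$. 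Applying $\be_i^j$ for $j = 0, \ldots, k-1$ produces bases of $\mathbb{C}^k$ with prescribed leading powers $t^{(k-1)/2 - j}$, by the acyclic condition in the definition of a Nahm complex. Since $g s_0 = s_0'$ and $g\be_i = \be_i' g$, matching leading coefficients in the expansions of these bases at $t = 0$ yields a well-defined continuous limit $g(0) \in GL(k,\mathbb{C})$ sending $b_i^j v$ to $(b_i')^j v'$. The analysis at $t = 1$ reduces to that at $t = 0$ via the symmetry established above. Verifying these boundary cancellations carefully is the main analytic step; the interior construction and the symmetry check are essentially formal.
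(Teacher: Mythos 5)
Your proof is correct and follows essentially the same route as the paper's: both construct $g$ by solving the gauge--intertwining ODE with initial value $G$ at $t=\frac12$ (your $(p')^{-1}Gp$ is exactly that solution) and then identify $g\be_ig^{-1}$ with $\be_i'$ by exploiting that, in the flat gauge, everything is constant and determined by its value at $t=\frac12$ (the paper phrases this via the covariantly constant sections $\be_i^ls_0$ and cyclicity of $w$; you phrase it via constancy of $p\be_ip^{-1}$). If anything you are more careful than the paper, whose proof does not verify the symmetry $g(1-t)=g^{\intercal}(t)^{-1}$ or the continuity of $g$ at $t=0,1$; your reduction to $B_i=B_i'$, the $\tilde p=p$ argument for the symmetry, and the sketch of the endpoint matching via the leading asymptotics of the bases $\be_i^js_0$ supply exactly the details the paper omits.
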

			\begin{proof}
				The equation $g\bar{d}_{\al}g^{-1}=\bar{d}_{\al'}$ for $g$ is an ODE and we can always solve with $g(\frac{1}{2})=G$ over $(0,1)$. In addition, based on our definition of $w$ where exists an unique element $s_{0}$ with $d_{\al}s_0=0$, $s_0(\frac{1}{2})=w$ and $s_0(0)=v$ and similarly, we have $s_0'$ corresponding to $d_{\al'}$ and $v'$. 
				
				For each $l=0,1,\cdots, k$, we have $$\bar{d}_{\al'}(g\be_i^ls_0)=g^{-1}\bar{d}_{\al}(\be_i^ls_0)=0,\;\bar{d}_{\al'}((\be_i')^ls_0')=0.$$ In addition, as $g\be_i^ls_0=(g\be_ig^{-1})^lgs_0$ and $g\be_i^ls_0|_{t=\frac12}=GB_i^lw=(B_i')^lw'=(\be_i')^ls_0'|_{t=\frac12}$, by uniqueness of solutions to ODE with given initial value at $t=\frac12$, we conclude that $g\be_i^ls_0=(\be_i')^ls_0'$. As $s_0$($s_0'$) is cyclic for $\be_i$($\be_i'$), we conclude $g\be_ig^{-1}=\be_i'$.
			\end{proof}
			
			Summarizing all the discussions in this section, we obtain
			\begin{corollary}
				The map $\kappa:\MMnc\to \MM_{Symm}$ is a bijection.
			\end{corollary}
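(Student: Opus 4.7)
The plan is to assemble the corollary directly from the three preceding lemmas, which were arranged precisely to carry out the well-definedness, surjectivity, and injectivity of $\kappa$. First I would check well-definedness. Given a Nahm complex $(\al,\be_1,\be_2,\be_3,v)$, the assignment $B_i:=\be_i(\tfrac{1}{2})$ and $w:=s_0(\tfrac{1}{2})$, where $s_0$ is characterized by $\bar{d}_{\al}s_0=0$ and $\lim_{t\to 0}t^{-\frac{k-1}{2}}s_0=v$, produces a Nahm quadruple by the lemma identifying $F_{(B_i,w)}^{\bullet}$ with the filtration $G_i^{\bullet}$ on the space of $\bar{d}_{\al}$-flat sections. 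Under an equivalence $g:[0,1]\to GL(k,\BC)$ of Nahm complexes, $g(\tfrac12)$ lies in $O(k,\BC)$ (since $g(1-t)=g^{\intercal}(t)^{-1}$ evaluated at $t=\tfrac12$), and it intertwines the two quadruples, so $\kappa$ descends to the quotient.

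For surjectivity, I would apply the construction lemma which, starting from any Nahm quadruple $(B_1,B_2,B_3,w)$, writes each $B_i$ in the cyclic basis generated by $w$ and $B_1$, uses the commutativity $[B_1,B_i]=0$ to reduce the subdiagonals to scalars $\gamma,\mu$, then conjugates by the diagonal complex gauge $p(t)$ to manufacture $(\al,\be_1,\be_2,\be_3,v)$ with the required meromorphic behavior at $t=0$. It remains only to check that applying $\kappa$ to this constructed Nahm complex returns $(B_1,B_2,B_3,w)$ up to $O(k,\BC)$, which is immediate since $p(\tfrac12)=1$, $\be_i(\tfrac12)=GB_iG^{-1}$, and $s_0(\tfrac12)=Gw$.

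For injectivity, suppose $\kappa[(\al,\be_1,\be_2,\be_3,v)] = \kappa[(\al',\be_1',\be_2',\be_3',v')]$, so there is $G\in O(k,\BC)$ with $GB_iG^{-1}=B_i'$ and $Gw=w'$. The last lemma directly produces $g\in\MGCC$ with $g(\tfrac12)=G$ and $g(\al,\be_1,\be_2,\be_3)=(\al',\be_1',\be_2',\be_3')$. To conclude that this is an equivalence of Nahm complexes I must verify $g(1-t)=g^{\intercal}(t)^{-1}$ and $g(0)v=v'$; the former follows by the uniqueness of ODE solutions for $g\bar{d}_{\al}g^{-1}=\bar{d}_{\al'}$ applied to both $g(t)$ and $g^{\intercal}(1-t)^{-1}$, which agree at $t=\tfrac12$ since $G\in O(k,\BC)$, and the latter follows because $gs_0=s_0'$ by the same cyclic argument used in the lemma, combined with the asymptotic normalizations $s_0\sim t^{\frac{k-1}{2}}v$ and $s_0'\sim t^{\frac{k-1}{2}}v'$.

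The only step that could be mildly delicate is tracking the compatibility between the $O(k,\BC)$ ambiguity on the symmetric-matrix side and the real-structure constraint $g(1-t)=g^{\intercal}(t)^{-1}$ on the gauge side; but since $O(k,\BC)=\{G\in GL(k,\BC) : G^{\intercal}G=\Id\}$ is exactly the fiber at $t=\tfrac12$ of this real structure, no genuine obstacle appears. Hence the corollary reduces to bookkeeping on top of the three lemmas already proven.
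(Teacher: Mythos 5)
Your proposal is correct and follows exactly the paper's route: the corollary is assembled from the three preceding lemmas (well-definedness via the quadruple-properties lemma and Lemma \ref{lemma_equivalentNahmquadruple}, surjectivity via the construction lemma, injectivity via the final gauge-equivalence lemma), which is all the paper itself does under the phrase ``summarizing all the discussions in this section.'' Your added verifications that the produced $g$ satisfies $g(1-t)=g^{\intercal}(t)^{-1}$ and $g(0)v=v'$ are details the paper leaves implicit, and your arguments for them are sound.
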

		\end{subsection}
		
		\begin{subsection}{Nahm quadruple and rational maps}
			We write $R_k$ be the set of the rational map $f:\CP^1\to \CP^1$ with degree $k$, with $f(0)=\infty$. When we identified $\CP^1=\mathbb{C}\cup\{\infty\}$, then every $f\in R_k$ can be written as $f(z)=\frac{p(z)}{q(z)}$, where $p(z),q(z)$ are coprime polynomials such that $\deg\;q=k,\;\deg\;p\leq k-1$. We have the following relationship between the rational map and weighted symmetric matrix:
			\begin{proposition}{\cite[Proposition 3.1]{donaldson1984nahmclassification}}
				Let $B$ be a $k\times k$ symmetric matrix and $w$ is a cyclic vector for $B$, then the following map
				$$(B,w)\to w^{\intercal}(z\Id-B)^{-1}w$$ induces a one-to-one correspondence between $O(k,\mathbb{C})$ equivalent classes of pair $(B,w)$ and $R_k$. 
			\end{proposition}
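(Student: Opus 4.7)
The plan is to verify, via the expansion at infinity $f(z) = \sum_{n\geq 0} m_n z^{-n-1}$ with moments $m_n := w^\intercal B^n w$, that the assignment descends to $O(k,\BC)$-orbits, lands in $R_k$, and is both injective and surjective. For well-definedness, given $A \in O(k,\BC)$ with $A^\intercal A = \Id$, one computes $(Aw)^\intercal(z\Id - ABA^{-1})^{-1}(Aw) = w^\intercal(z\Id - B)^{-1}w$. The image is a rational function $p/q$ with $q = \det(z\Id - B)$ of degree $k$ and $\deg p \leq k-1$; cyclicity of $w$ is equivalent to coprimeness of $p$ and $q$, since any common factor would detect a nontrivial $B$-invariant subspace missed by $w$. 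Hence the image lies in $R_k$.

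For injectivity, if two pairs $(B,w)$ and $(B',w')$ produce the same $f$, then all moments agree. I consider the cyclic bases $e_i = B^{i-1}w$ and $e'_i = (B')^{i-1}w'$, and collect them as columns of matrices $M, M'$. Since $B$ is symmetric, $e_i^\intercal e_j = w^\intercal B^{i+j-2}w = m_{i+j-2}$, so both $M^\intercal M$ and $(M')^\intercal M'$ equal the common Hankel matrix $H$. Then $A := M'M^{-1}$ satisfies $A^\intercal A = \Id$. By construction $A$ sends $e_i$ to $e'_i$; combined with the recursion $Be_i = e_{i+1}$ (and the Cayley--Hamilton relation for $Be_k$ imposed by the shared denominator $q$), this forces $ABA^{-1} = B'$ and $Aw = w'$.

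For surjectivity, given $f = p/q \in R_k$, I extract the moments $m_n$ and form the Hankel matrix $H = (m_{i+j-2})_{i,j=1}^{k}$. A Pad\'e-approximant argument shows $H$ is non-degenerate: a nontrivial kernel relation would yield a representation of $f$ with denominator of degree strictly less than $k$, contradicting $\gcd(p,q) = 1$. Since every non-degenerate complex symmetric bilinear form is equivalent to the standard one, factor $H = M^\intercal M$ with $M \in GL(k,\BC)$. On an abstract basis $\{e_i\}$, define $B_0 e_i = e_{i+1}$ for $i<k$ and $B_0 e_k$ via Cayley--Hamilton from $q$; then $B_0$ is symmetric for $H$ with cyclic vector $e_1$. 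Transporting along $M$ produces a symmetric pair $(B, w)$ on $\BC^k$ whose generating function is $f$.

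The most delicate step is surjectivity, specifically the non-degeneracy of $H$ and the realization of rational functions whose denominator has repeated roots, for which $B$ must be non-diagonalizable but still complex-symmetric. The Hankel framework treats all cases uniformly, so the substantive algebraic input is the Pad\'e argument ensuring $\det H \neq 0$ from coprimeness of $p$ and $q$.
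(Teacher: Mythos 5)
Your argument is correct, and the paper itself gives no proof of this proposition: it is imported verbatim as \cite[Proposition 3.1]{donaldson1984nahmclassification}, whose proof is exactly the Hankel/moment construction you use (Donaldson phrases the surjectivity step as multiplication by $z$ on $\mathbb{C}[z]/(q)$ equipped with the bilinear form $\langle z^i,z^j\rangle=m_{i+j}$, which is your matrix $H$). The only step you state more tersely than you prove is the equivalence ``$w$ cyclic $\Leftrightarrow$ $\gcd(p,q)=1$''; the clean way to close it inside your own framework is to note that cyclicity makes $M$ invertible, hence $\det H=(\det M)^2\neq 0$, and a nonsingular $k\times k$ Hankel block forces the minimal recurrence length, i.e.\ the degree of the reduced denominator, to be exactly $k$. (The paper's normalization $f(0)=\infty$ appears to be a slip for $f(\infty)=0$, which is the condition actually encoded by $\deg p\leq k-1<\deg q=k$ and the one your proof realizes.)
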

			
			Therefore, for each Nahm quadruple $(B_1,B_2,B_3,w)$, we could associate three based point rational map $f_1,f_2,f_3$ defined as $f_i:=w^{\intercal}(z\Id-B_i)^{-1}w$. However, we don't find obvious relationship between these three based rational maps. We would like to ask the following question:
			\begin{question}
				Given any three based point rational maps $f_1,f_2,f_3$, when it comes from a Nahm quadruple?
			\end{question}
			
			Currently, we can't answer this question, but we will give a computation when $k=2$
			\begin{example}
				When $k=2$, let $(B_1,B_2,B_3,w)$ be a Nahm quadruple, we define the non-vanishing complex number $\tau:=w^{\intercal}w$, then there exists an unique $g\in O(2,\mathbb{C})$ such that $gw=\sqrt{\tau}\begin{pmatrix}
				1\\
				0
				\end{pmatrix}$, using this gauge and the condition that $[B_i,B_j]=0$, we conclude that $B_i=\begin{pmatrix}
				p_i & q_i\\
				q_i & p_i+q_is
				\end{pmatrix},$ where $p_i,s\in\mathbb{C}$ and $q_i\in \mathbb{C}^{\st}$. Then the corresponding based point rational maps can be written as
				\begin{equation*}
				f_i(z)=\frac{\tau(z-p_i-q_is)}{(z-p_i)(z-p_i-q_is)-q_i^2}.
				\end{equation*}
				Thus, when $k=2$ the space of Nahm quadruple can be parametrized by eight independent variables $(p_1,p_2,p_3,q_1,q_2,q_3,\tau,s)$. However, the space of three degree two rational maps can be parametrized by $12$ independent variables, which has much larger freedom.
			\end{example} 
		\end{subsection}
	\end{section}

	\bibliographystyle{plain}
	\bibliography{references}
\end{document}